\newtheorem{thm}{Theorem}[section] 
\newtheorem{lem}[thm]{Lemma}     
\newtheorem{cor}[thm]{Corollary}
\newtheorem{rmk}[thm]{Remark}
\newtheorem{ex}[thm]{Example}
 \numberwithin{equation}{section}
\renewcommand{\Im}{\operatorname{Im}}
\newcommand{\abs}[1]{\left\vert#1\right\vert}
\newcommand{\diag}{\operatorname{diag}}
\newcommand{\R}{{\mathbb R}}
\newcommand{\Z}{{\mathbb Z}}
\newcommand{\C}{{\mathbb C}}
\newcommand{\N}{{\mathbb N}}
\newcommand{\Q}{{\mathbb Q}}
\newcommand{\eq} [1] {\begin{equation}\label{#1}\quad}
\newcommand{\en} {\end{equation}}
\title[Factorizations, Riemann-Hilbert problems, and corona]
 {Factorizations, Riemann-Hilbert problems\\ and the corona theorem} 
\author[C\^{a}mara]{M.C. C\^{a}mara}
\address{Departamento de Matem\'atica\\ Instituto Superior T\'ecnico\\
Av. Rovisco Pais\\ 1049-001 Lisboa, Portugal}
\email{cristina.camara@math.ist.utl.pt}
\author[Diogo]{C. Diogo}
\address{Departamento de M\'etodos Quantitativos\\
Lisbon University Institute (ISCTE-IUL)\\
Av. das For\c cas Armadas, 1649-026 Lisboa,
Portugal}
\email{cristina.diogo@iscte.pt}
\author[Karlovich]{Yu.I. Karlovich}
\address{Facultad de Ciencias\\ Universidad Aut\'onoma del Estado de
Morelos\\ Av. Universidad 1001, Col. Chamilpa,\\ C.P. 62209
Cuernavaca, Morelos,  M\'exico}
\email{karlovich@uaem.mx}
\author[Spitkovsky]{I.M. Spitkovsky}
\address{Department of Mathematics\\ College of William and Mary\\
Williamsburg, VA 23187\\ USA}
\email{ilya@math.wm.edu, imspitkovsky@gmail.com}
\subjclass{47A68 (primary), 42A75, 47B35 (secondary)}
\thanks{The work was partially supported by FCT through
the Program POCI 2010/FEDER and the project PTDC/MAT/81385/2006, Portugal (the first two authors), by the SEP-CONACYT Project
No. 25564 and by PROMEP via ``Proyecto de Redes'', M\'exico (the third author), and by
the William \& Mary sabbatical research leave funding (the fourth author). The project PTDC/MAT/81385/2006 also partially supported
Yu.~Karlovich and I. Spitkovsky during their visits to Instituto Superior T\'ecnico.}
\begin{document}
\maketitle

\begin{abstract}
The solvability of the Riemann-Hilbert boundary value problem on the
real line is described in the case when its matrix coefficient admits a
Wiener-Hopf type factorization with bounded outer factors but rather
general diagonal elements of its middle factor. This covers, in
particular, the almost periodic setting, when the factorization multiples
belong to the algebra generated by the functions
$e_\lambda(x):=e^{i\lambda x}$, $\lambda\in\R$. Connections with
the corona problem are discussed. Based on those, constructive
factorization criteria are derived for several types of triangular
$2\times 2$ matrices with  diagonal entries $e_{\pm\lambda}$ and
non-zero off diagonal entry of the form $a_-e_{-\beta}+a_+e_\nu$ with
$\nu,\beta\geq 0$, $\nu+\beta>0$ and $a_\pm$ analytic and bounded
in the upper/lower half plane.
\end{abstract}

\section{Introduction}\label{introd}

The (vector) Riemann-Hilbert boundary value problem on the real line
$\R$ consists in finding two vector functions $\phi_\pm$, analytic in
the upper and lower half plane $\C^\pm=\{ z\in\C \colon \pm\Im
z>0\}$ respectively, satisfying the condition \eq{RHn}
\phi_-=G\phi_++g,\en imposed on their boundary values on $\R$.
Here $g$ is a given vector function and $G$ is a given matrix function
defined on $\R$, of appropriate sizes. It is well known that various
properties of (\ref{RHn}) can be described in terms of the (right) {\em
factorization} of its matrix coefficient $G$, that is, a representation of
$G$ as a product \eq{unt} G=G_-DG_+^{-1}, \en where $G_\pm$ and
their inverses are analytic in $\C^\pm$ and $D$ is a diagonal matrix
function with diagonal entries $d_j$ of a certain prescribed structure.
An exact definition of the factorization (\ref{unt}) is correlated with the
setting of the problem (\ref{RHn}), that is,  the requirements on the
boundary behavior of $\phi_\pm$.

To introduce a specific example, denote by $H_r^\pm$ the Hardy
classes in $\C^\pm$ and by $L_r$ the Lebesgue space on $\R$, with
$r\in (0,\infty]$. Let us also agree, for any set $X$, to denote by $X^n$
($X^{n\times n}$) the set of all $n$-vectors (respectively, $n\times n$
matrices) with entries in $X$.

With this notation at hand, recall that the {\em $L_p$ setting} of
(\ref{RHn}) is the one for which $g\in L_p^n$ and $\phi_\pm\in
(H_p^\pm)^n$. An appropriate representation (\ref{unt}), in this
setting with $p>1$, is the  so called $L_p$ factorization of $G$: the
representation (\ref{unt}) in which \eq{pcond}
\lambda_\pm^{-1}G_\pm\in (H_p^\pm)^{n\times n},\quad
\lambda_\pm^{-1}G_\pm^{-1}\in (H_q^\pm)^{n\times n}\text{ and }
d_j=(\lambda_-/\lambda_+)^{\kappa_j}. \en Here
\[ 1<p<\infty,\quad q=\frac{p}{p-1},\  \lambda_\pm
(z)=z\pm i,
\] and the integers $\kappa_j$ are called the (right) {\em partial indices} of $G$.

A full solvability picture for the problem (\ref{RHn}) with $L_p$
factorable $G$ can be extracted from \cite[Chapter 3]{LS}, see also
\cite{GKS03}. The central result in this direction is (the real line version
of) the Simonenko's theorem, according to which (\ref{RHn}) has a
unique solution for every right hand side $g$ --- equivalently, the
associated Toeplitz operator $T_G=: P_+G| (H_p^+)^n$ is invertible ---
if and only if $G$ admits an $L_p$ factorization (\ref{unt}) with $D=I$,
subject to the additional condition \eq{Phi} G_-P_+G_-^{-1} \text{ is a
densely defined bounded operator on } L^n_p.\en Here $P_+$ is the
projection operator of $L_p$ onto $H_p^+$ along $H_p^-$, defined on
vector (or matrix) functions entrywise.

In this paper, we take particular  interest in {\em bounded}
factorizations for which  in (\ref{unt}), by definition,  \eq{bf} G_+^{\pm
1}\in (H_\infty^+)^{n\times n}, \quad G_-^{\pm 1}\in
(H_\infty^-)^{n\times n}. \en Of course, with $d_j$ as in (\ref{pcond}) a
bounded factorization of $G$ is its $L_p$ factorization simultaneously
for all $p\in (1,\infty)$, and the additional condition (\ref{Phi}) is
satisfied. However, some meaningful conclusions regarding the
problem (\ref{RHn}) can be drawn from the relation (\ref{unt})
satisfying (\ref{bf}) even without any additional information about the
diagonal entries of $D$. This idea for $L_p$ factorization on closed
curves was first discussed in \cite{Spit881}; in Section~\ref{RHP} we
give a detailed account of the bounded factorization version. That
includes in particular the interplay between the factorization problem
and the corona theorem.

Section~\ref{s:AP} deals with the {\em almost periodic} ($AP$ for
short) setting, in which the elements of the matrix function involved
belong to the algebra $AP$ generated by the functions \eq{el}
e_\lambda(x)=e^{i\lambda x},\quad \lambda\in\R,\en the diagonal
elements $d_j$ being chosen among its generators $e_\lambda$. In
this case not only we consider the solvability of (\ref{RHn}) when $G$
admits an $AP$ factorization, but also address the converse question:
what information on the existence and the properties of that
factorization can be obtained from a solution to a homogeneous
problem \eq{RH} G\phi_+=\phi_-,\quad \phi_\pm\in (H_p^\pm)^n \en
 with $p=\infty$.

In Sections~\ref{applications}, \ref{GAP} we consider classes of matrix
functions $G$ for which (\ref{RHn}) is closely related with a
convolution equation on an interval of finite length. By determining a
solution to the homogeneous Riemann-Hilbert problem (\ref{RH}) in
$H^\pm_\infty$ and applying the results of the previous sections, we
study the factorability of $G$ and the properties of the related Toeplitz
operator $T_G$. In particular, invertibility conditions for this operator
are obtained and a subclass of matrix functions is identified for which
invertibility of $T_G$ is (somewhat surprisingly) equivalent to its
semi-Fredholmness.

\section{Riemann-Hilbert problems and factorization}\label{RHP}

We start with the description of the solutions to (\ref{RHn}), in terms of
a bounded factorization (\ref{unt}).

\begin{thm}\label{th:A}
Let $G$ admit a bounded factorization {\em (\ref{unt})}. Then all
solutions of the problem {\em (\ref{RHn})} satisfying $\phi_\pm\in
(H_p^\pm)^n$ for some $p\in [1,\infty]$ are given by \eq{sol}
\phi_+=\sum_j \psi_jg_j^+,\quad \phi_-=\sum_j d_j\psi_jg_j^-+g.\en
Here $g_j^\pm$ stands for the $j$-th column of $G_\pm$:
\eq{Gmp}G_-=\left[
\begin{array}{cccc}g_1^- & g_2^- & \ldots & g_n^- \\
\end{array}\right], \quad G_+=\left[\begin{array}{cccc}
 g_1^+ & g_2^+ & \ldots & g_n^+ \\\end{array}\right],\en and
$\psi_j$ is an arbitrary function satisfying \eq{scRH} \psi_j\in
H_p^+,\quad d_j\psi_j+(G_-^{-1}g)_j\in H_p^-.\en
\end{thm}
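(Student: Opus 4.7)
The plan is to make the standard change of variables induced by the factorization and read off the conditions. Specifically, I substitute $G = G_- D G_+^{-1}$ into (\ref{RHn}) and introduce the auxiliary vector function
\[
\psi = G_+^{-1}\phi_+,
\]
so that $\phi_+ = G_+\psi = \sum_j \psi_j g_j^+$. Multiplying the identity $\phi_- = G_-DG_+^{-1}\phi_+ + g$ on the left by $G_-^{-1}$ rewrites (\ref{RHn}) as
\[
G_-^{-1}\phi_- - G_-^{-1}g = D\psi,
\]
whence $\phi_- = G_- D\psi + g = \sum_j d_j\psi_j g_j^- + g$, giving the form (\ref{sol}).

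The bulk of the argument is then a double verification. For the forward direction, assume $\phi_\pm\in (H_p^\pm)^n$ solve (\ref{RHn}). Since $G_+^{-1}\in (H_\infty^+)^{n\times n}$ and $G_-^{-1}\in (H_\infty^-)^{n\times n}$, multiplication by these matrices maps $(H_p^+)^n$ into $(H_p^+)^n$ and $(H_p^-)^n$ into $(H_p^-)^n$ respectively. Thus $\psi = G_+^{-1}\phi_+\in (H_p^+)^n$, and from the rewriting above
\[
D\psi + G_-^{-1}g = G_-^{-1}\phi_- \in (H_p^-)^n,
\]
so each component $\psi_j$ satisfies (\ref{scRH}). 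Conversely, given $\psi_j$ satisfying (\ref{scRH}), define $\phi_+$ and $\phi_-$ by (\ref{sol}). Then $\phi_+ = G_+\psi\in (H_p^+)^n$ because $G_+\in (H_\infty^+)^{n\times n}$, while
\[
\phi_- - g = G_- D\psi = G_-\bigl(D\psi + G_-^{-1}g\bigr) - g,
\]
so $\phi_- = G_-\bigl(D\psi + G_-^{-1}g\bigr)$ lies in $(H_p^-)^n$ by (\ref{scRH}) together with $G_-\in (H_\infty^-)^{n\times n}$. A direct substitution confirms $\phi_- = G\phi_+ + g$.

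There is no real obstacle here; the only point requiring attention is the systematic use of the bounded factorization condition (\ref{bf}), which guarantees that each of the multiplications $\phi_+\mapsto G_+^{-1}\phi_+$, $\psi\mapsto G_+\psi$, $\phi_-\mapsto G_-^{-1}\phi_-$, and $\chi\mapsto G_-\chi$ preserves the relevant Hardy class $(H_p^\pm)^n$ for every $p\in[1,\infty]$. Without (\ref{bf}) one would have to argue by duality and impose additional integrability on the diagonal factor $D$; with it, the equivalence between (\ref{RHn}) and the scalar uncoupled problems (\ref{scRH}) is immediate.
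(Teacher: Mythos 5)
Your proof is correct and follows essentially the same route as the paper: the change of variables $\psi=G_+^{-1}\phi_+$, the rewriting of (\ref{RHn}) as $D\psi+G_-^{-1}g=G_-^{-1}\phi_-$, and a two-way verification using the boundedness condition (\ref{bf}). If anything, your handling of the converse direction — writing $\phi_-=G_-\bigl(D\psi+G_-^{-1}g\bigr)$ so that membership in $(H_p^-)^n$ follows directly from (\ref{scRH}) and $G_-\in(H_\infty^-)^{n\times n}$ — makes explicit a step the paper leaves slightly compressed.
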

In other words, the Riemann-Hilbert problem (\ref{RHn}) with a matrix
coefficient $G$ admitting a bounded factorization can be untangled
into $n$ scalar Riemann-Hilbert problems, in the same $L_p$ setting.

The proof of Theorem~\ref{th:A} is standard in the factorization theory,
based on a simple change of unknowns $\phi_\pm=G_\pm\psi_\pm$.
We include it here for completeness.
\begin{proof} If $(\phi_+,\phi_-)$ is a solution to (\ref{RHn}), then defining
$\psi:=(\psi_j)_{j=1,\ldots,n}=G_+^{-1}\phi_+$ we get
$\phi_+=G_+\psi$, $\phi_-=G_-D\psi+g$, which is equivalent to
(\ref{sol}), and (\ref{scRH}) is satisfied. Conversely, if (\ref{scRH})
holds for all $j=1,\ldots,n$, then $\phi_+=G_+\psi\in (H_p^+)^n$,
$\phi_-=G_-D\psi+g\in (H_p^-)^n$, and (\ref{RHn}) holds.
\end{proof}

We will say that a function $f$, defined a.e. on $\R$, is of {\em
non-negative type} if \eq{dj1} f\in H_\infty^+ \text{ or } f^{-1}\in
H_\infty^-.\en The type is {\em non-positive} if \eq{dj2} f\in H_\infty^-
\text{ or } f^{-1}\in H_\infty^+, \en (strictly) {\em positive} if (\ref{dj1})
holds while (\ref{dj2}) does not, and {\em neutral} if both (\ref{dj1}),
(\ref{dj2}) hold.

\begin{lem}\label{l:uniq}For $d_j$ of positive type, there is at most one
function $\psi_j$ satisfying {\em (\ref{scRH})}. \end{lem}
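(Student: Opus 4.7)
The plan is to reduce to the following implication: if $\delta \in H_p^+$ and $d_j\delta \in H_p^-$, then $\delta \equiv 0$. Given two candidates $\psi_j^{(1)}$ and $\psi_j^{(2)}$ satisfying (\ref{scRH}), their difference $\delta := \psi_j^{(1)} - \psi_j^{(2)}$ belongs to $H_p^+$ by the first condition of (\ref{scRH}), and subtracting the second condition for $\psi_j^{(1)}$ from the one for $\psi_j^{(2)}$ gives $d_j\delta \in H_p^-$. So uniqueness is equivalent to that implication.

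To exploit the assumption that $d_j$ is of positive type, I would consider the two alternatives in (\ref{dj1}) separately. Case (i): $d_j \in H_\infty^+$. Then multiplication by $d_j$ preserves $H_p^+$, so $d_j\delta \in H_p^+ \cap H_p^-$. Case (ii): $d_j^{-1} \in H_\infty^-$. Then $\delta = d_j^{-1}(d_j\delta) \in H_p^-$, whence $\delta \in H_p^+ \cap H_p^-$. In either case, one of $\delta$ or $d_j\delta$ lies in the intersection $H_p^+ \cap H_p^-$.

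For $p \in [1,\infty)$ this intersection is trivial, by a standard Cauchy-integral argument; since a nonzero $H_\infty^+$ function has a.e.\ nonzero boundary values, this immediately gives $\delta = 0$ in both cases. The main obstacle is $p = \infty$, where $H_\infty^+ \cap H_\infty^-$ consists of constants rather than $\{0\}$; this is precisely where the \emph{strictness} of positive type (the failure of (\ref{dj2})) is used. Letting $c$ denote the constant value, in Case (i) the identity $d_j\delta = c$ combined with $\delta \in H_\infty^+$ would force $d_j^{-1} \in H_\infty^+$ if $c \neq 0$, and in Case (ii) the identity $\delta = c$ combined with $d_j\delta \in H_\infty^-$ would force $d_j \in H_\infty^-$ if $c \neq 0$; either conclusion contradicts the strict positivity of the type of $d_j$. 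Hence $c = 0$ and $\delta \equiv 0$ in the $p = \infty$ case as well.
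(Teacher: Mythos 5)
Your proposal is correct and takes essentially the same route as the paper: reduce to showing a single $\psi\in H_p^+$ with $d_j\psi\in H_p^-$ must vanish, split on the two alternatives in (\ref{dj1}), use that $H_p^+\cap H_p^-$ is $\{0\}$ for $p<\infty$ and consists of constants for $p=\infty$, and invoke the strictness of positive type to rule out a nonzero constant. The only presentational difference is that you separate $p<\infty$ from $p=\infty$ explicitly, while the paper handles both at once by observing the constant can only be nonzero when $p=\infty$; in Case (i) you should also note, as the paper does, that once $d_j\delta=0$ one still needs $d_j\not\equiv 0$ (which follows from strict positivity, since $0\in H_\infty^-$) to conclude $\delta=0$.
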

\begin{proof}It suffices to show that the only function $\psi\in H_p^+$ satisfying
$d_j\psi\in H_p^-$ is zero.

If the first condition in (\ref{dj1}) holds for $f=d_j$, then $d_j\psi\in
H_p^+$ simultaneously with $\psi$ itself. From here and $d_j\psi\in
H_p^-$ it follows that $d_j\psi$ is a constant. If this constant is
non-zero (which is only possible if $p=\infty$), then $d_j$ is invertible
in $H_\infty^+$ which contradicts the strict positivity of its type. On the
other hand, the product $d_j\psi$ of two analytic functions may be
identically zero only if one of them is. It cannot be $d_j$ (once again,
since otherwise the first condition in (\ref{dj2}) would hold); thus,
$\psi=0$.

The second case of (\ref{dj1}) can be treated in a similar way.
\end{proof}

As an immediate consequence we have:
\begin{cor}\label{cor:B} If $G$ admits a bounded factorization  with all $d_j$
of positive type, then the homogeneous Riemann-Hilbert problem
{\em (\ref{RH})} has only the trivial solution $\phi_+=\phi_-=0$ for any
$p\in [1,\infty]$.
\end{cor}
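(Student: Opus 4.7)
The plan is to read off the corollary from Theorem~\ref{th:A} and Lemma~\ref{l:uniq}. The homogeneous problem (\ref{RH}) is the special case of (\ref{RHn}) with $g=0$, and $G$ is assumed to admit a bounded factorization, so Theorem~\ref{th:A} applies. Hence every solution $(\phi_+,\phi_-)\in (H_p^+)^n\times (H_p^-)^n$ of (\ref{RH}) is of the form (\ref{sol}) for some collection of scalar functions $\psi_j$ satisfying (\ref{scRH}), which, since $g=0$, reduces to
\[
\psi_j\in H_p^+,\qquad d_j\psi_j\in H_p^-.
\]

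Next, I would apply Lemma~\ref{l:uniq} coordinatewise. For each $j$, the two requirements on $\psi_j$ above coincide with the homogeneous version of (\ref{scRH}) considered in the lemma, and $\psi_j=0$ obviously satisfies them. Because $d_j$ is of positive type, Lemma~\ref{l:uniq} says there is at most one such $\psi_j$; hence $\psi_j=0$ is the only possibility. Substituting $\psi_j\equiv 0$ into (\ref{sol}) together with $g=0$ gives $\phi_+=0$ and $\phi_-=0$.

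There is really no obstacle here: the corollary is a direct concatenation of Theorem~\ref{th:A} (which decouples the vector problem into $n$ scalar ones) with Lemma~\ref{l:uniq} (which kills each scalar problem individually under the positive type hypothesis). The only tiny subtlety worth mentioning in the writeup is that uniqueness of $\psi_j$ in Lemma~\ref{l:uniq} is stated for the inhomogeneous scalar problem, but since $\psi_j=0$ is a solution when $g=0$, uniqueness immediately forces $\psi_j=0$.
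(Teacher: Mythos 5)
Your proposal is correct and is exactly the argument the paper has in mind: the corollary is stated there as an immediate consequence of Lemma~\ref{l:uniq} (together with Theorem~\ref{th:A}), and your concatenation of the two, specialized to $g=0$, is precisely that deduction.
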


If $d_j$ is of neutral type, then by definition it is either invertible in
$H_\infty^+$, or in $H_\infty^-$, or is equal to zero. Disallowing the
latter case, and absorbing $d_j$ in the column $g_j^\pm$ in the
former, we may without loss of generality suppose that all such $d_j$
are actually equal 1. With this convention in mind, the following result
holds.

\begin{cor}\label{cor:C} Let  $G$ admit a bounded factorization with all $d_j$
of non-negative type, $d_j\neq 0$. Then the homogeneous problem
{\em (\ref{RH})} for $1\leq p<\infty$ has only the trivial solution, and
for $p=\infty$ all its solutions are given by \eq{hs} \phi_+=\sum_{j\in
J} c_jg_j^+,\quad \phi_-=\sum_{j\in J} c_jg_j^-. \en Here $g_j^\pm$
are as in {\em (\ref{Gmp})}, $c_j\in\C$, and $j\in J$ if and only if $d_j$
is of neutral type. \end{cor}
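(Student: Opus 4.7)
The plan is to deduce Corollary~\ref{cor:C} directly from Theorem~\ref{th:A} by taking $g=0$. Under that specialization, the solutions of (\ref{RH}) in $(H_p^\pm)^n$ are exactly the pairs
\[
\phi_+=\sum_j\psi_jg_j^+,\qquad \phi_-=\sum_j d_j\psi_jg_j^-,
\]
built from scalar functions $\psi_j\in H_p^+$ for which $d_j\psi_j\in H_p^-$. Everything therefore reduces to characterizing the admissible $\psi_j$ one column at a time.

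For each index $j$ such that $d_j$ is of strictly positive type, Lemma~\ref{l:uniq} applies verbatim and forces $\psi_j=0$ for every $p\in[1,\infty]$; these columns contribute nothing to the sums above.

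For $j\in J$, i.e.\ when $d_j$ is of neutral type, the convention adopted just before the corollary allows us to take $d_j=1$. The two conditions on $\psi_j$ then collapse to $\psi_j\in H_p^+\cap H_p^-$. This is a classical intersection: for $1\le p<\infty$ it equals $\{0\}$, since such a boundary function extends analytically across $\R$ (by Morera) to an entire function lying in $L_p(\R)$ on horizontal lines, hence vanishes; for $p=\infty$ the same extension combined with Liouville's theorem yields exactly the constants $\psi_j=c_j\in\C$.

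Assembling the per-column conclusions gives $\phi_\pm=0$ whenever $p<\infty$, whereas for $p=\infty$ the only non-trivial contributions come from $j\in J$ and produce $\phi_\pm=\sum_{j\in J}c_jg_j^\pm$, which is precisely (\ref{hs}). I do not anticipate a genuine obstacle: Lemma~\ref{l:uniq} handles the strictly positive columns, and the one classical input required, namely that $H_p^+\cap H_p^-=\{0\}$ for finite $p$ and equals $\C$ for $p=\infty$, is the only place where a little care is needed, and it is standard.
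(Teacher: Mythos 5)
Your proof is correct and follows the same route as the paper: specialize Theorem~\ref{th:A} to $g=0$, eliminate the columns with $d_j$ of strictly positive type via Lemma~\ref{l:uniq}, reduce the neutral columns (with the convention $d_j=1$) to $\psi_j\in H_p^+\cap H_p^-$, and invoke the standard fact that this intersection is $\{0\}$ for $1\le p<\infty$ and equals the constants for $p=\infty$. The paper states this last fact without elaboration; your Morera/Liouville sketch is the usual justification.
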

\begin{proof}From (\ref{sol}) and from Lemma~\ref{l:uniq} we have \[
\phi_+=\sum_{j\in J}\psi_jg_j^+,\quad \phi_-=\sum_{j\in
J}d_j\psi_jg_j^-,\] while our convention regarding the neutral type
allows us to drop the functions $d_j$ in the expression for $\phi_-$.
Finally, (\ref{scRH}) with $d_j$ of neutral type and $g=0$ means that
$\psi_j\in H_p^+\cap H_p^-$, and thus $\psi_j$ is a constant ($=0$ if
$p<\infty$). \end{proof}

Recall that the factorization (\ref{unt}) is {\em canonical} if the middle
factor $D$ of it is the identity matrix, and can therefore be dropped:
\eq{canfac} G=G_-G_+^{-1}.\en The following criterion for bounded
canonical  factorability is easy to establish, and actually well known.
We state it here, with proof, for the sake of completeness and ease of
references.

\begin{lem}\label{l:canfac}$G$ admits a bounded canonical factorization {\em (\ref{canfac})}
if and only if problem {\em (\ref{RH})} with $p=\infty$ has solutions
$\phi_j^\pm$, $j=1,\ldots, n$, such that \eq{ind} \det
[\phi_1^\pm\ldots \phi_n^\pm] \text{ is invertible in }
H_\infty^\pm.\en  If this is the case, then one of the factorizations is
given by \eq{phiG} G_\pm =\left[\phi_1^\pm\ldots \phi_n^\pm\right],
\en and all solutions to {\em (\ref{RH})} in $H_\infty^\pm$ are linear
combinations of $\phi_j^\pm$.
\end{lem}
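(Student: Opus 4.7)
The plan is to handle the two directions of the equivalence separately, both via the column‑wise reformulation of the matrix identity $G\,G_+=G_-$, and then to derive the description of the full solution set as a consequence of Corollary~\ref{cor:C}.

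First, for the necessity, I would assume $G=G_-G_+^{-1}$ is a bounded canonical factorization and let $\phi_j^\pm=g_j^\pm$ be the $j$‑th columns of $G_\pm$. Rewriting $G_-=G\,G_+$ column by column gives $G\phi_j^+=\phi_j^-$, and the containment $G_\pm^{\pm 1}\in (H_\infty^\pm)^{n\times n}$ from (\ref{bf}) yields $\phi_j^\pm\in H_\infty^\pm$. The determinants $\det[\phi_1^\pm\ldots\phi_n^\pm]=\det G_\pm$ belong to $H_\infty^\pm$ together with their reciprocals, because $(\det G_\pm)^{-1}=\det(G_\pm^{-1})$ is an entry‑wise polynomial in the entries of the bounded analytic matrix $G_\pm^{-1}$. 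This gives (\ref{ind}).

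For the sufficiency, I would assemble the given solutions into matrices $G_\pm$ via (\ref{phiG}). Each $G_\pm\in (H_\infty^\pm)^{n\times n}$ by hypothesis, and $GG_+=G_-$ on $\R$ holds column by column. The one technical point is to check that $G_\pm^{-1}\in (H_\infty^\pm)^{n\times n}$; this follows from the adjugate formula
\[
G_\pm^{-1}=\frac{1}{\det G_\pm}\,\operatorname{adj}(G_\pm),
\]
since (\ref{ind}) guarantees $(\det G_\pm)^{-1}\in H_\infty^\pm$ and the entries of $\operatorname{adj}(G_\pm)$ are polynomials in entries of $G_\pm$, hence in $H_\infty^\pm$. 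Consequently $G=G_-G_+^{-1}$ is a bounded canonical factorization.

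Finally, for the last assertion I would invoke Corollary~\ref{cor:C} applied to this canonical factorization, for which $D=I$ so that every $d_j=1$ is of neutral type and $J=\{1,\ldots,n\}$. Formula (\ref{hs}) then asserts that every solution of (\ref{RH}) in $(H_\infty^\pm)^n$ is a linear combination of the columns $g_j^\pm=\phi_j^\pm$. None of the steps presents a genuine obstacle; the only point that requires any care is the passage from invertibility of $\det G_\pm$ in $H_\infty^\pm$ to $G_\pm^{-1}\in (H_\infty^\pm)^{n\times n}$, which is handled by the adjugate identity above.
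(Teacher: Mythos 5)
Your proof is correct and follows essentially the same route as the paper: take columns of the factors in one direction, assemble the given solutions into $G_\pm$ in the other, and invoke Corollary~\ref{cor:C} for the description of the solution set. The only difference is that you spell out the adjugate/determinant argument for $G_\pm^{-1}\in (H_\infty^\pm)^{n\times n}$, which the paper leaves implicit in its terse statement that $G_\pm$ "satisfy (\ref{bf})."
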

\begin{proof}If (\ref{canfac}) holds with $G_\pm$ satisfying (\ref{bf}), then one may
choose $\phi_j^\pm$ as the $j$-th column of $G_\pm$. Conversely, if
$\phi_j^\pm$ satisfy (\ref{RH}) and (\ref{ind}), then $G_\pm$ given by
(\ref{phiG}) satisfy $GG_+=G_-$ and (\ref{bf}). Therefore, (\ref{canfac})
holds and delivers a bounded canonical factorization of $G$.

The last statement now follows from Corollary~\ref{cor:C}.
\end{proof}

Observe that for $G$ with constant non-zero determinant, the
determinants of matrix functions $G_\pm$ given by (\ref{phiG})  also
are necessarily constant. So, (\ref{ind}) holds if and only if the vector
functions $\phi_1^+(z),\ldots,\phi_n^+(z)$  (or
$\phi_1^-(z),\ldots,\phi_n^-(z)$) are linearly independent for at least
one value of $z\in\C^+$ (resp., $\C^-$).

As it happens, if $G$ admits a bounded canonical factorization, all its
bounded factorizations (with no a priori conditions on $d_j$) are
forced to be ``almost'' canonical. The precise statement is as follows.

\begin{thm}\label{th:D}Let $G$ have a bounded canonical factorization
$G=\widetilde{G}_-\widetilde{G}_+^{-1}$. Then all its bounded
factorizations are given by {\em (\ref{unt})}, where each $d_j$ has a
bounded canonical factorization \eq{d} d_j=d_{j-}d_{j+}^{-1}, \quad
j=1,\ldots,n,\en   \eq{GD} G_\pm=\widetilde{G}_\pm Z
D_\pm^{-1},\quad D_\pm=\diag [ d_{1\pm},\ldots, d_{n\pm}],\en and
$Z$ is an arbitrary invertible matrix in $\C^{n\times n}$.
\end{thm}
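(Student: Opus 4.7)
The approach is to compare two parametrizations of the $H_\infty^\pm$-solutions of the homogeneous problem (\ref{RH}): one coming from the canonical factorization via Corollary~\ref{cor:C}, and the other from the arbitrary bounded factorization via Theorem~\ref{th:A}. Matching them will force $D$ to admit a diagonal canonical factorization and will reveal $G_\pm$ up to right multiplication by an invertible constant matrix.

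Set $A := \widetilde{G}_-^{-1} G_- \in (H_\infty^-)^{n \times n}$ and $B := \widetilde{G}_+^{-1} G_+ \in (H_\infty^+)^{n \times n}$, both invertible in their respective algebras; the identity $\widetilde{G}_-\widetilde{G}_+^{-1} = G_- D G_+^{-1}$ yields $B = AD$. By Corollary~\ref{cor:C} applied to the canonical factorization (all $d_j=1$ are of neutral type), the solution set of (\ref{RH}) with $p=\infty$ is exactly the $n$-dimensional space $\{(\widetilde{G}_+ c,\widetilde{G}_- c):c\in\C^n\}$. By Theorem~\ref{th:A} with $g=0$ applied to the factorization $G=G_-DG_+^{-1}$, the same solutions take the form $(\phi_+,\phi_-)=(G_+\psi,G_-D\psi)$ with $\psi$ ranging over
\[
N := \{\psi \in (H_\infty^+)^n : D\psi \in (H_\infty^-)^n\}.
\]
Equating the two descriptions forces $\psi = B^{-1}c$, so $N = B^{-1}(\C^n)$ has dimension $n$.

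The crucial step is now a dimensional collapse. Writing $N_j := \{f \in H_\infty^+ : d_j f \in H_\infty^-\}$, the space $N$ is the product $N_1 \times \cdots \times N_n$, whence $\sum_j \dim N_j = n$. No $N_j$ can be zero — otherwise a whole row of $B^{-1}$ would vanish, contradicting its invertibility — so each $\dim N_j = 1$. Picking $d_{j+}$ spanning $N_j$, the equality $N = B^{-1}(\C^n) = N_1 \times \cdots \times N_n$ forces $B^{-1} = D_+ W$ with $D_+ := \diag[d_{1+},\ldots,d_{n+}]$ and some invertible constant $W \in \C^{n\times n}$. Since $B = W^{-1}D_+^{-1}$ must lie in $(H_\infty^+)^{n\times n}$, each $d_{j+}^{\pm 1} \in H_\infty^+$. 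Defining $d_{j-} := d_j d_{j+}$ (which lies in $H_\infty^-$ by the definition of $N_j$) and using the analogous relation $A = BD^{-1} = W^{-1}D_-^{-1}$ with $A$ invertible in $(H_\infty^-)^{n\times n}$ yields $d_{j-}^{\pm 1} \in H_\infty^-$. This produces the canonical factorization (\ref{d}), and setting $Z := W^{-1}$ recovers (\ref{GD}) on unpacking the definitions of $A$ and $B$. The converse — that any invertible $Z$ together with any canonical factorizations of the $d_j$'s reconstructs a bounded factorization of $G$ — is immediate from the telescoping identity $D_-^{-1} D D_+ = I$.

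The main obstacle I anticipate is precisely this dimensional collapse: showing each $N_j$ is \emph{exactly} one-dimensional, even though a priori it could be an infinite-dimensional subspace of $H_\infty^+$. The decisive leverage comes from Corollary~\ref{cor:C}, which via the canonical factorization hypothesis pins $\dim N = n$; invertibility of $B^{-1}$ then prevents this total dimension from concentrating in fewer than $n$ coordinates, spreading it as exactly one dimension per $N_j$ and thereby exposing each $d_{j+}$ as a canonical outer-type factor of $d_j$.
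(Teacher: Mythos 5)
Your proof is correct, and it takes a genuinely different route from the paper's. The paper proves that each $d_j$ admits a bounded canonical factorization by first noting $D=F_-F_+^{-1}$ with $F_\pm^{\pm1}\in(H_\infty^\pm)^{n\times n}$, concluding that the Toeplitz operator $T_D$ — and hence, since $D$ is diagonal, each scalar $T_{d_j}$ — is invertible on $H_p^+$, then invoking the scalar Simonenko theorem to obtain a canonical $L_2$ factorization of each $d_j$; after that it shows the $L_2$ factors are actually in $H_\infty^\pm$ by comparing against nonzero entries of $F_\pm$ (using $\lambda_-H_2^-\cap\lambda_+H_2^+=\C$), and establishes their invertibility via a determinant argument. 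Your argument bypasses Toeplitz invertibility and the whole $L_p$ factorization machinery entirely: you match the two parametrizations of the $H_\infty$-solution space (Corollary~\ref{cor:C} for the canonical factorization, Theorem~\ref{th:A} for the arbitrary one), observe that its dimension $n$ splits as $\sum_j\dim N_j$ with each $N_j\ne 0$ (else a row of $B^{-1}$ vanishes), and conclude $\dim N_j=1$ — which immediately exhibits the scalar factor $d_{j+}$ as a generator of $N_j$ and produces $B^{-1}=D_+W$ by comparing two linear isomorphisms $\C^n\to N$. From there both proofs converge on essentially the same algebraic endgame, namely that $A=W^{-1}D_-^{-1}$ (equivalently, $\widetilde G_-^{-1}G_-D_-=\widetilde G_+^{-1}G_+D_+$) must be a constant matrix. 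Your route is more self-contained and stays within the bounded/$H_\infty$ framework the rest of Section~\ref{RHP} is built on, which is an advantage; the paper's route is shorter if one is willing to import $L_p$ factorization theory, which the authors already rely on elsewhere. One small remark: you should note (as is implicit in your setup) that $D=A^{-1}B$ is invertible, so all $d_j\ne 0$, before applying Theorem~\ref{th:A}; and the isomorphism $D_+:\C^n\to N$ requires $d_{j+}\ne 0$, which holds because $d_{j+}$ spans a one-dimensional space.
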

\begin{proof}Equating two factorizations $\widetilde{G}_-\widetilde{G}_+^{-1}$ and $G_-DG_+^{-1}$ yields
\eq{D} D=
G_-^{-1}\widetilde{G}_-\widetilde{G}_+^{-1}G_+=F_-F_+^{-1},\en where
$F_\pm,\ F_\pm^{-1}\in (H^\pm_\infty)^{n\times n}$. Consequently,
$D$ admits a bounded canonical factorization, and therefore the
Toeplitz operator $T_D$ is invertible on $(H_p^+)^n$ for $p\in
(1,\infty)$. Being the direct sum of $n$ scalar Toeplitz operators
$T_{d_j}$, this implies that each of the latter also is invertible, on
$H_p^+$. Thus, each of the scalar functions $d_j$ admits a canonical
$L_p$ factorization. Let  (\ref{d}) be such a factorization,
corresponding\footnote{The interpolation property of factorization
\cite[Theorem 3.9]{LS} implies that in our setting the canonical $L_p$
factorization of $d_j$ is the same for all $p\in (1,\infty)$ but this fact
has no impact on the reasoning.} to $p=2$. Then, according to
(\ref{D}) the elements $f_{ij}^\pm$ of the matrix functions $F_\pm$ are
related as $f_{ij}^-=d_jf_{ij}^+$. Due to the invertibility of $F_\pm$, for
each $j$ the functions $f^\pm_{ij}$ are non-zero for at least one value
of $i$. Choosing such $i$ arbitrarily, and abbreviating the respective
$f_{ij}^\pm$ simply to $f_{j\pm}$, we have \[
f_{j-}d_{j-}^{-1}=f_{j+}d_{j+}^{-1}.\]  The left and right hand side of the
latter equality is a function in $\lambda_-H_2^-$ and
$\lambda_+H_2^+$, respectively. Hence, each of them is just a scalar
(non-zero, due to our choice of $i$). So, $d_{j\pm}\in H_\infty^\pm$.

Letting $d_\pm=\prod_{j=1}^n d_{j\pm}$,  from here we obtain that
$\det D=d_-d_+^{-1}$, with $d_\pm\in H_\infty^\pm$. But (\ref{D})
implies also that $\det D$ admits the bounded analytic factorization
$\det F_-/\det F_+$. Thus, \[ d_-/\det F_-=d_+/\det F_+,\] with left/right
hand side lying in $H_\infty^\pm$, respectively. Hence, $d_\pm$
differs from $\det F_\pm$ only by a (clearly, non-zero) scalar multiple,
and therefore is invertible in $H_\infty^\pm$. This implies the
invertibility of each multiple $d_{j\pm}$ in $H_\infty^\pm$, $j=1,\ldots,
n$, so that each representation (\ref{d}) is in fact  a bounded
canonical factorization.

With the notation $D_\pm$ as in (\ref{GD}), the first equality in
(\ref{D}) can be rewritten as \[
\widetilde{G}_-^{-1}G_-D_-=\widetilde{G}_+^{-1}G_+D_+.\] Since the
left/right hand side is invertible in $(H_\infty^\mp)^{n\times n}$, each
of them is in fact an invertible constant matrix $Z$. This implies the
first formula in (\ref{GD}).\end{proof} According to (\ref{GD}) with
$D=I$, two bounded canonical factorizations of $G$ are related as
\eq{descan} G_\pm=\widetilde{G}_\pm Z, \text{ where } Z\in
\C^{n\times n}, \det Z\neq 0,\en --- a well-known fact.

When $n=2$, the results proved above simplify in a natural way. We
will state only one such simplification, once again, for convenience of
references.

\begin{thm}\label{th:E}Let $G$ be a $2\times 2$ matrix function admitting a bounded factorization
{\em (\ref{unt})} with one of the diagonal entries (say $d_2$) of
positive type. Then the problem {\em (\ref{RH})} has non-trivial
solutions in $H_p^\pm$ for some $p\in [1,\infty]$ if and only if $d_1$
admits a representation $d_1=d_{1-}d_{1+}^{-1}$ with $d_{1\pm}\in
H_p^\pm$. If this condition holds, then all the solutions of {\em
(\ref{RH})} are given by
\[ \phi_+=\psi g_1^+,\quad \phi_-=d_1\psi g_1^-, \] where $g_1^\pm$ is the first column of
$G_\pm$ in the factorization {\em (\ref{unt})} and $\psi\in H_p^+$ is
an arbitrary function satisfying  $d_1\psi\in H_p^-$.\end{thm}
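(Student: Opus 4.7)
The plan is to derive Theorem~\ref{th:E} as a direct consequence of Theorem~\ref{th:A} and Lemma~\ref{l:uniq}, exploiting the fact that in the $2\times 2$ setting only two scalar Riemann-Hilbert problems arise, one of which is killed by the positive-type hypothesis on $d_2$.

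First I would apply Theorem~\ref{th:A} to the homogeneous problem (\ref{RH}), that is, with $g=0$. This gives that every solution $(\phi_+,\phi_-)\in(H_p^+)^2\times(H_p^-)^2$ has the form
\[
\phi_+=\psi_1 g_1^++\psi_2 g_2^+,\qquad \phi_-=d_1\psi_1 g_1^-+d_2\psi_2 g_2^-,
\]
where each $\psi_j\in H_p^+$ must satisfy $d_j\psi_j\in H_p^-$ (since $g=0$, the condition (\ref{scRH}) simplifies to this). Because $d_2$ is of positive type, Lemma~\ref{l:uniq} forces $\psi_2=0$, so every solution already has the form stated in the theorem, namely $\phi_+=\psi g_1^+$ and $\phi_-=d_1\psi g_1^-$ with $\psi:=\psi_1\in H_p^+$ and $d_1\psi\in H_p^-$.

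Next I would establish the equivalence between the existence of a nontrivial solution and the claimed factorization of $d_1$. For the forward direction, a nontrivial solution provides a function $\psi\in H_p^+$, $\psi\not\equiv 0$, with $d_1\psi\in H_p^-$; setting $d_{1+}:=\psi$ and $d_{1-}:=d_1\psi$ yields $d_{1\pm}\in H_p^\pm$ and $d_1=d_{1-}d_{1+}^{-1}$ a.e.\ on~$\R$ (well-defined because a nonzero element of $H_p^+$ vanishes only on a set of measure zero). Note that $g_1^+$ itself is nonzero a.e.\ because the factorization (\ref{unt}) has invertible outer factors, so $\phi_+=\psi g_1^+$ being nonzero a.e.\ forces $\psi\neq 0$. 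Conversely, given a representation $d_1=d_{1-}d_{1+}^{-1}$ with $d_{1\pm}\in H_p^\pm$, one simply takes $\psi=d_{1+}$ (which is nonzero since it is a valid denominator), observing that $d_1\psi=d_{1-}\in H_p^-$, and then $\phi_+=\psi g_1^+$, $\phi_-=d_1\psi g_1^-$ is a nontrivial solution of (\ref{RH}).

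I do not anticipate a real obstacle: the whole argument is a specialization of already-proved machinery to the case $n=2$. The only mildly delicate point is verifying that the $\psi$ extracted from a nontrivial solution is itself nonzero (so that the ratio $d_{1-}/d_{1+}$ makes sense and really equals $d_1$ a.e.), and this is handled by the standard fact that a Hardy-class function vanishing on a positive-measure subset of $\R$ must be identically zero, combined with the a.e.\ nonvanishing of the columns $g_j^\pm$ that follows from the boundedness of $G_\pm^{\pm 1}$.
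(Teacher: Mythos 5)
Your proof is correct and follows essentially the same route as the paper: apply Theorem~\ref{th:A} (with $g=0$) to reduce to two scalar problems, kill the $\psi_2$-component via Lemma~\ref{l:uniq} using the positive type of $d_2$, and then set $d_{1+}=\psi$, $d_{1-}=d_1\psi$ for necessity and $\psi=d_{1+}$ for sufficiency. Your extra remarks about a.e.\ nonvanishing are sound but not needed beyond the observation the paper makes implicitly, namely that $\psi\neq 0$ whenever the solution is nontrivial and $d_{1+}\neq 0$ by definition of the representation.
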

\begin{proof}{\sl Sufficiency.} If $d_1=d_{1-}d_{1+}^{-1}$ with $d_{1\pm}\in H_p^\pm$, then obviously $d_{1+}\neq 0$
and \[ \phi_+=d_{1+}g_1^+, \quad \phi_-=d_{1-}g_1^- \] is a non-trivial
solution to (\ref{RH}).

{\sl Necessity.} By Lemma~\ref{l:uniq} and Theorem~\ref{th:A} the
solution must be of the form $\phi_+=\psi g_1^+$, $\phi_-=d_1\psi
g_1^-$ with $\psi\in H_p^+\setminus\{0\}$, $d_1\psi\in H_p^-$. It
remains to set $d_{1+}=\psi$, $d_{1-}=d_1\psi$. \end{proof}

More interestingly, there is a close relation between  factorization and
corona problems.

Recall that  a vector function $\omega$ with entries
$\omega_1,\ldots,\omega_n\in H_\infty^+$ satisfies the {\em corona
condition} in $\C^+$ (notation: $\omega\in CP^+$) if and only if \[
\inf_{z\in\C^+}(|\omega_1(z)|+\cdots+|\omega_n(z)|)>0.\] The
\emph{corona condition in $\C^-$} for a vector function $\omega\in
(H_\infty^-)^n$ and the notation $\omega\in CP^-$ are introduced
analogously.

By the corona theorem, $\omega\in CP^\pm$ if and only if there exists
$\omega^*=(\omega_1^*,\ldots,\omega_n^*)\in (H_\infty^\pm)^n$
such that \[
\omega_1\omega_1^*+\cdots+\omega_n\omega_n^*=1.\]

\begin{thm}\label{th:F} If an $n\times n$ matrix function $G$ admits
a bounded canonical factorization, then any non-trivial solution of
problem {\em (\ref{RH})} in $(H_\infty^\pm)^n$ actually lies in
$CP^\pm$. \end{thm}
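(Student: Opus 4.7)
The plan is to leverage the explicit description of solutions to (\ref{RH}) provided by Lemma~\ref{l:canfac}, together with the ``only if'' direction of the corona theorem already recalled in the text.

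First, since $G$ admits a bounded canonical factorization $G=G_-G_+^{-1}$, Lemma~\ref{l:canfac} (applied with $D=I$ and Corollary~\ref{cor:C} implicit in its final claim) tells us that every solution $(\phi_+,\phi_-)$ of (\ref{RH}) in $(H_\infty^\pm)^n$ is a linear combination of the columns of $G_\pm$. In matrix form, there is a constant vector $c\in\C^n$ with
\[
\phi_+=G_+c,\qquad \phi_-=G_-c,
\]
and non-triviality of the solution forces $c\neq 0$.

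Second, to verify $\phi_+\in CP^+$, I would exhibit a Bezout-type identity rather than estimate $\sum_j|\phi_{+,j}(z)|$ directly. Pick any constant vector $\eta\in\C^n$ with $\eta^Tc=1$ (possible since $c\neq 0$) and set
\[
\omega^*:=\bigl(G_+^{-1}\bigr)^T\eta.
\]
Because the bounded canonical factorization assumption gives $G_+^{-1}\in (H_\infty^+)^{n\times n}$, each component of $\omega^*$ lies in $H_\infty^+$. A direct computation yields
\[
\sum_{j=1}^n \omega_j^*\,\phi_{+,j} \;=\; \omega^{*T}\phi_+ \;=\; \eta^T G_+^{-1}G_+c \;=\; \eta^Tc \;=\; 1.
\]
By the converse direction of the corona theorem (as quoted just before the statement), this forces $\phi_+\in CP^+$. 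The argument for $\phi_-\in CP^-$ is identical, using $G_-^{-1}\in (H_\infty^-)^{n\times n}$ in place of $G_+^{-1}$.

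There is essentially no hard step here: once Lemma~\ref{l:canfac} reduces the solution space of (\ref{RH}) to the column span of $G_\pm$ with constant coefficients, the boundedness of $G_\pm^{\mp 1}$ built into the definition of bounded canonical factorization supplies the corona mate for free. The only thing to be mildly careful about is the orientation (transposes) when passing from the matrix identity $G_+^{-1}G_+=I$ to the scalar Bezout identity required by the definition of $CP^+$.
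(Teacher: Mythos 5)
Your proof is correct and follows essentially the same route as the paper: both begin by invoking Corollary~\ref{cor:C} (equivalently, the last claim of Lemma~\ref{l:canfac}) to write any non-trivial solution as $\phi_\pm=G_\pm c$ with $c\in\C^n\setminus\{0\}$, and both then exploit the invertibility of $G_\pm$ in $(H_\infty^\pm)^{n\times n}$ to produce a corona mate. The only cosmetic difference is that the paper reaches the conclusion via (\ref{descan}) --- extending $c$ to an invertible constant $Z$ so that $\phi_\pm$ becomes a column of the new factor $G_\pm Z$, whence a column of an invertible $H_\infty^\pm$ matrix is automatically in $CP^\pm$ --- whereas you write down the Bezout identity $\eta^TG_+^{-1}\phi_+=\eta^Tc=1$ directly without passing through a second factorization, which is a slight streamlining of the same idea.
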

\begin{proof}Let $G$ admit a bounded canonical factorization (\ref{canfac}).
By Corollary~\ref{cor:C}, every non-trivial solution $\phi_\pm$ of
(\ref{RH})  is a nontrivial linear combination of the columns
$g_j^\pm$, $j=1,\ldots,n$. According to (\ref{descan}), any such
combination, in turn, can be used as a column of some (perhaps,
different) bounded canonical factorization of $G$. Being a column of
an invertible element of $(H_\infty^\pm)^{n\times n}$, it must lie in
$CP^\pm$.\end{proof}

The following result is a somewhat technical generalization of
Theorem~\ref{th:F}, which will be used later on.

\begin{thm}\label{th:G}
Let $G$ be an $n\times n$ matrix function admitting a bounded
factorization  {\em (\ref{unt})} in which for all $k=2,\ldots,n$ either
$d_k=d_1\neq 0$ or $d_1^{-1}d_k$ is a function of positive type. Then
for any pair of non-zero vector functions $\phi_\pm\in
(H^\pm_\infty)^n$ satisfying $G\phi_+=\phi_-$, $d_1\phi_+\in
(H_\infty^+)^n$,  in fact stronger conditions \eq{2.19} d_1\phi_+\in
CP^+,\quad \phi_-\in CP^-\en hold. In order for such pairs to exist,
$d_1$ has to be of non-positive type.
\end{thm}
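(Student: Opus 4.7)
My plan is to diagonalize the relation $G\phi_+=\phi_-$ via the substitution $\psi:=G_+^{-1}\phi_+$, extract the constant part of $d_1\psi$ using Lemma~\ref{l:uniq}, and then recognize $d_1\phi_+$ and $\phi_-$ as images of the same constant vector under the invertible $H_\infty^\pm$-matrices $G_\pm$, from which the corona conclusion will be immediate.

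First, I would observe that, since $G_+^{\pm 1}\in (H_\infty^+)^{n\times n}$, the vector $\psi$ lies in $(H_\infty^+)^n$, and rewriting $G\phi_+=\phi_-$ in the form $\phi_-=G_-D\psi$ gives $D\psi=G_-^{-1}\phi_-\in(H_\infty^-)^n$; in particular $d_j\psi_j\in H_\infty^-$ for every $j$. Premultiplying the hypothesis $d_1\phi_+\in(H_\infty^+)^n$ by $G_+^{-1}$ analogously yields $d_1\psi_j\in H_\infty^+$ for every $j$.

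The core step is to split the indices into $J:=\{1\}\cup\{k\geq 2:d_k=d_1\}$ and its complement, and to show that $d_1\psi$ is a constant vector. For $k\in J$, the function $d_1\psi_k=d_k\psi_k$ belongs to $H_\infty^+\cap H_\infty^-$ and hence equals a constant $c_k$. For $k\notin J$, I would set $h_k:=d_1\psi_k\in H_\infty^+$ and notice that $(d_1^{-1}d_k)h_k=d_k\psi_k\in H_\infty^-$, so Lemma~\ref{l:uniq} applied with the positive-type function $d_1^{-1}d_k$ in place of $d_j$ forces $h_k=0$. Since $d_1\neq 0$ a.e.\ (which is implicit in the hypothesis $d_k=d_1\neq 0$ or $d_1^{-1}d_k$ being a well-defined function), this also gives $\psi_k=0$ for $k\notin J$, whence $d_1\psi=D\psi=c$, where $c$ is the constant vector with entries $c_k$ on $J$ and $0$ elsewhere.

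It then follows at once that $d_1\phi_+=G_+c$ and $\phi_-=G_-c$, with $c\neq 0$ because $\phi_+\neq 0$. The corona conditions (\ref{2.19}) will fall out from the pointwise identity $c=G_\pm^{-1}(z)(G_\pm c)(z)$ together with the uniform boundedness of $G_\pm^{-1}$ on $\C^\pm$, which yields a positive uniform lower bound on $\sum_j|(G_\pm c)_j(z)|$ throughout the corresponding half plane. For the last assertion, choosing any index $k\in J$ with $c_k\neq 0$ gives $d_1\psi_k=c_k$ with $\psi_k\in H_\infty^+$, so $d_1^{-1}=\psi_k/c_k\in H_\infty^+$, meaning $d_1$ is of non-positive type by (\ref{dj2}).

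The only place where real care is needed, and the main potential obstacle, is the bookkeeping that merges the two alternatives \emph{$d_k=d_1$} and \emph{$d_1^{-1}d_k$ of positive type} into a single statement $d_1\psi=c$, while being careful not to tacitly assume that $d_1$ is bounded or invertible in $L_\infty$---it is only a measurable factor of the middle term $D$.
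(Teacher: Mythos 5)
Your proof is correct, and it follows a genuinely different route from the paper. The paper renormalizes $G$ to $\widetilde{G}=d_1^{-1}G$, observes that $(d_1\phi_+,\phi_-)$ solves the homogeneous problem for $\widetilde{G}$, and then appeals to the general corona statement of Theorem~\ref{th:F} (when $d_1=\cdots=d_n$) or to Corollary~\ref{cor:C} (when all $d_1^{-1}d_k$, $k\geq 2$, are of positive type) to identify the solution with scalar multiples of the columns of $G_\pm$. You instead work directly in the diagonalized coordinate $\psi=G_+^{-1}\phi_+$, extract the two boundedness conditions $d_j\psi_j\in H_\infty^-$ and $d_1\psi_j\in H_\infty^+$, and then use Lemma~\ref{l:uniq} entrywise to deduce that $d_1\psi$ is a constant vector $c$ supported on $J=\{1\}\cup\{k:d_k=d_1\}$. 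This yields $d_1\phi_+=G_+c$, $\phi_-=G_-c$ with $c\neq 0$, from which the corona estimate and the non-positivity of the type of $d_1$ fall out immediately.

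What each approach buys: the paper's version is shorter because it leans on the already-established Theorem~\ref{th:F} and Corollary~\ref{cor:C}, and it places the result conceptually within the canonical-factorization framework of the section. Your version is more elementary and self-contained -- it needs only Lemma~\ref{l:uniq} and the Liouville observation $H_\infty^+\cap H_\infty^-=\C$ -- and it has the cosmetic advantage of treating the mixed case (some $d_k$ equal to $d_1$, others with $d_1^{-1}d_k$ of positive type) in a single uniform argument; the paper's proof, as written, enumerates only the two extreme sub-cases, leaving the routine extension to the reader. Your closing remark that $d_1^{-1}=\psi_k/c_k\in H_\infty^+$ for some $k\in J$ with $c_k\neq 0$ also replaces the paper's appeal to the corona theorem for the final claim by a direct pointwise identity, which is a small but genuine simplification.

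One point worth making explicit in a final write-up: $D=G_-^{-1}GG_+$ automatically lies in $L_\infty^{n\times n}$, so $d_1$ is bounded; that it is non-vanishing a.e.\ is, as you note, forced by the hypothesis (either $d_1=d_k\neq 0$ for some $k$, or $d_1^{-1}d_k$ is a bona fide function for all $k\geq 2$), and this is exactly what justifies the passage from $d_1\psi_k=0$ to $\psi_k=0$ and from $c\neq 0$ to $\phi_+\neq 0$.
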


\begin{proof}Let $\widetilde{G}=d_1^{-1}G$. Then, due to (\ref{unt}),
\eq{2.20} \widetilde G=G_-\widetilde D G_+^{-1}\text{ with } \widetilde
D=\diag [1,d_1^{-1}d_2,\ldots,d_1^{-1}d_n],  \en which of course is a
bounded factorization of $\widetilde{G}$.

Condition $\phi_-=G\phi_+$ implies that
$\phi_-=\widetilde{G}d_1\phi_+$, so that  the pair $d_1\phi_+,\phi_-$
is a non-trivial solution of the homogeneous Riemann-Hilbert problem
with the coefficient $\widetilde{G}$.

If $d_1=d_2=\cdots=d_n$, then (\ref{2.20}) delivers a bounded
canonical factorization of $\widetilde{G}$, so that the desired result
follows from Theorem~\ref{th:F}. If, on the other hand,
$d_1^{-1}d_2,\ldots,d_1^{-1}d_n$ are all of positive type, then
$d_1\phi_+$ and $\phi_-$ differ only by a (non-zero) constant scalar
multiple from the first column of $G_+$ and $G_-$ respectively,
according to Corollary~\ref{cor:C}. This again implies (\ref{2.19}).

Finally, from $d_1\phi_+\in CP^+$ and $\phi_+\in (H^+_\infty)^n$ it
follows that $d_1^{-1}\in H^+_\infty$, that is, $d_1$ is of non-positive
type.
\end{proof}

The exact converse of Theorem~\ref{th:F} is not true. However, a
slightly more subtle result holds.
\begin{thm}\label{th:FA}Let $G\in L_\infty^{2\times 2}$ be such that there exists
a solution of problem {\em (\ref{RH})} in $CP^\pm$. Then the Toeplitz
operators $T_G$ on $(H_p^+)^2$ and $T_{\det G}$ on $H_p^+$ are
Fredholm only simultaneously, and their defect numbers
coincide.\end{thm}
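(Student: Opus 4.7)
The plan is to use the corona data $\phi_\pm\in CP^\pm$ to build a bounded analytic change of variables that converts $G$ into an upper triangular matrix with $1$ in the upper left corner and $\det G$ in the lower right, and then to read off the Fredholm data from that triangular Toeplitz operator.

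First, I would apply the corona theorem to $\phi_+=(\phi_+^1,\phi_+^2)^T\in CP^+$ to obtain $(\psi_+^1,\psi_+^2)\in (H_\infty^+)^2$ with $\phi_+^1\psi_+^1+\phi_+^2\psi_+^2=1$, and assemble
\[ M_+=\left[\begin{array}{cc}\phi_+^1 & -\psi_+^2\\ \phi_+^2 & \psi_+^1\end{array}\right]\in(H_\infty^+)^{2\times 2},\qquad \det M_+=1.\]
By the cofactor formula, $M_+^{-1}\in(H_\infty^+)^{2\times 2}$ as well. In exactly the same manner, I would build $M_-\in(H_\infty^-)^{2\times 2}$ with $M_-^{-1}\in(H_\infty^-)^{2\times 2}$, $\det M_-=1$, and $\phi_-$ as its first column.

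Next, define $\widetilde{G}=M_-^{-1}GM_+\in L_\infty^{2\times 2}$. The first column of $\widetilde{G}$ is $M_-^{-1}G\phi_+=M_-^{-1}\phi_-=e_1$, and $\det\widetilde{G}=\det G$, so $\widetilde{G}$ is upper triangular of the form
\[ \widetilde{G}=\left[\begin{array}{cc} 1 & b\\ 0 & \det G\end{array}\right], \qquad b\in L_\infty.\]
Using the standard identity $T_{ac}=T_aT_c$ available whenever the left factor is in $H_\infty^-$ or the right factor is in $H_\infty^+$, I would derive $T_{\widetilde G}=T_{M_-^{-1}}T_GT_{M_+}$. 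The same identity applied to $M_+^{\pm 1}\in(H_\infty^+)^{2\times 2}$ and to $M_-^{\pm 1}\in(H_\infty^-)^{2\times 2}$ shows that $T_{M_+}$ and $T_{M_-^{-1}}$ are mutually inverse with $T_{M_+^{-1}}$ and $T_{M_-}$ respectively, hence invertible on $(H_p^+)^2$. Consequently, $T_G$ and $T_{\widetilde G}$ are Fredholm only simultaneously and have coinciding kernel and cokernel dimensions.

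Finally, I would note that, after identifying $(H_p^+)^2$ with $H_p^+\oplus H_p^+$,
\[ T_{\widetilde G}=\left[\begin{array}{cc} I & T_b\\ 0 & T_{\det G}\end{array}\right]\]
is block upper triangular. A direct computation of its kernel and range gives $\ker T_{\widetilde G}\cong\ker T_{\det G}$ (via $y\mapsto(-T_by,y)$) and $\operatorname{coker}T_{\widetilde G}\cong\operatorname{coker}T_{\det G}$, so $T_{\widetilde G}$ is Fredholm iff $T_{\det G}$ is, with equal defect numbers. Combined with the previous step this yields the claim. The main technical care needed is the bookkeeping of the identities $T_{ac}=T_aT_c$ to justify the intertwining and the invertibility of $T_{M_+}$, $T_{M_-^{-1}}$; everything else is a straightforward corona construction plus the triangular block operator lemma.
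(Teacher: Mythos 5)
Your proof is correct and follows essentially the same route the paper does: use the corona data for $\phi_\pm$ to build invertible $M_\pm\in(H_\infty^\pm)^{2\times2}$, conjugate $G$ into a triangular form with $1$ and $\det G$ on the diagonal, and read the defect numbers off the resulting block-triangular Toeplitz operator. The paper merely states the triangular representation (referring to computations in [BKS1, Section~22.1]) and invokes properties of block-triangular operators, whereas you spell out the corona construction, the Toeplitz multiplicativity identities that justify $T_{\widetilde G}=T_{M_-^{-1}}T_GT_{M_+}$, and the kernel/cokernel isomorphisms explicitly; the only cosmetic difference is that the paper's middle factor places $\det G$ in the upper-left corner rather than the lower-right.
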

\begin{proof}The existence of the above mentioned solutions implies (see, e.g.,
computations in \cite[Section 22.1]{BKS1}) that \[ G=
X_-\left[\begin{matrix}\det G & 0 \\ * & 1\end{matrix}\right] X_+,\]
where $X_\pm$ is an invertible element of $(H_\infty^\pm)^{2\times
2}$. From here and elementary  properties of block triangular
operators it follows that the respective defect numbers (and thus the
Fredholm behavior) of $T_G$ and $T_{\det G}$ are the same.
\end{proof} According to Theorem~\ref{th:FA}, in the particular case
when $\det G$ admits a canonical factorization,  the operator $T_G$ is
invertible provided that (\ref{RH}) has a solution in $CP^\pm$. For
$\det G\equiv 1$ the latter result was essentially established in
\cite{BasKS03}.  An alternative, and more detailed, proof of
Theorem~\ref{th:FA} can be found in \cite{CDR}, Theorems~4.1 and
4.4.

Let now $\mathcal B$ be a subalgebra of $L_\infty$ (not necessarily
closed in $L_\infty$ norm) such that, for any $n$, a matrix function
$G\in {\mathcal B}^{n\times n}$ admits a bounded canonical
factorization if and only if the operator $T_G$ is invertible in
$(H_p^+)^n$ for at least one (and therefore all) $p\in (1,\infty)$. There
are many classes satisfying this property, e.g., decomposable algebras
of continuous functions (see \cite{CG,LS}) or the algebra $APW$
considered below.

\begin{thm}\label{th:2.7A}
Let $G\in {\mathcal B}^{2\times 2}$ with $\det G$ admitting a
bounded canonical factorization, and let $\phi_\pm\in
(H_\infty^\pm)^2$ be a non-zero solution to {\em (\ref{RH})}. Then
$G$ has a bounded canonical factorization if and only if $\phi_\pm\in
CP^\pm$.
\end{thm}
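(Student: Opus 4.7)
The plan is to split the argument into the two implications. The \emph{only if} direction is essentially a rereading of Theorem~\ref{th:F}: if $G$ admits a bounded canonical factorization, then any non-zero solution $\phi_\pm\in(H_\infty^\pm)^2$ of (\ref{RH}) automatically lies in $CP^\pm$ by that theorem, applied with $n=2$. So this half is immediate, with no extra input required.

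For the \emph{if} direction, the first move is to invoke Theorem~\ref{th:FA}. The hypothesis $\phi_\pm\in CP^\pm$ is exactly what that theorem demands, and it yields that $T_G$ on $(H_p^+)^2$ and $T_{\det G}$ on $H_p^+$ are Fredholm only simultaneously, with coinciding defect numbers. Since $\mathcal B$ is an algebra and $G\in\mathcal B^{2\times 2}$, the scalar $\det G$ also belongs to $\mathcal B$; combined with the standing assumption that $\det G$ admits a bounded canonical factorization, the defining property of $\mathcal B$ (used in its scalar form $n=1$) gives invertibility of $T_{\det G}$, so both of its defect numbers vanish.

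Transporting this information through Theorem~\ref{th:FA}, we obtain that $T_G$ has vanishing defect numbers as well, i.e., $T_G$ is invertible on $(H_p^+)^2$. Applying the defining property of $\mathcal B$ one last time, now to $G\in\mathcal B^{2\times 2}$, produces the desired bounded canonical factorization of $G$. I do not anticipate any real obstacle: the whole argument is a short chaining of Theorem~\ref{th:F}, Theorem~\ref{th:FA}, and the defining property of $\mathcal B$ invoked in both the scalar and the matrix case. The only piece of bookkeeping worth flagging is that the same pair $\phi_\pm$ plays the role of the corona witness in every invocation, which is automatic here since Theorem~\ref{th:FA} and the statement being proved refer to the identical object.
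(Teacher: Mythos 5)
Your argument is correct and coincides with the paper's (terse) proof: necessity from Theorem~\ref{th:F}, sufficiency from Theorem~\ref{th:FA} combined with the defining property of $\mathcal B$ to pass between invertibility of $T_{\det G}$, invertibility of $T_G$, and the existence of the bounded canonical factorization. The only minor remark is that the implication ``$\det G$ admits a bounded canonical factorization $\Rightarrow T_{\det G}$ is invertible'' does not actually need the $\mathcal B$-hypothesis at all (it holds for any $L_\infty$ symbol, since bounded outer factors automatically satisfy condition (\ref{Phi})); the $\mathcal B$-hypothesis is only genuinely used in the final step, to get from invertibility of $T_G$ back to a bounded canonical factorization of $G$.
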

\begin{proof}Necessity follows from Theorem~\ref{th:F} and sufficiency from Theorem~\ref{th:FA}.
The latter can also be deduced from \cite[Theorem 3.4]{BasKS03}
formulated there for $G$ with constant determinant but  remaining
valid if $\det G$ merely admits a bounded canonical factorization.
\end{proof}

\section{{$AP$} factorization}\label{s:AP}
We will now recast the results of the previous section in the
framework of $AP$ factorization. To this end, recall that $AP$ is the
uniform closure of all linear combinations $\sum c_je_{\lambda_j}$,
$c_j\in\C$,  with $e_{\lambda_j}$ defined by (\ref{el}), while these
linear combinations themselves form the set $APP$ of all {\em almost
periodic polynomials}. Properties of $AP$ functions are discussed in
detail in \cite{C,Le}, see also \cite[Chapter 1]{BKS1}. In particular, for
every $f\in AP$ there exists its {\em mean value}
\[ M(f)=\lim_{T\to\infty}\frac{1}{2T}\int_{-T}^T f(t)\, dt.\] This yields the existence of
$\widehat{f}(\lambda):=M(e_{-\lambda}f)$, the {\em Bohr-Fourier
coefficients} of $f$.  For any given $f\in AP$, the set
\[ \Omega(f)=\{\lambda\in\R\colon \widehat{f}(\lambda)\neq 0\}
\] is at most countable, and is called the {\em Bohr-Fourier spectrum}
of $f$.  The formal {\em Bohr-Fourier series}
$\sum_{\lambda\in\Omega(f)}\widehat{f}(\lambda)e_\lambda$ may
or may not converge; we will write $f\in APW$ if it does converge
absolutely. The algebras $AP$ and $APW$ are inverse closed in
$L_\infty$; moreover, for an invertible $f\in AP$ there exists an
(obviously, unique) $\lambda\in\R$ such that a continuous branch of
$\log(e_{-\lambda}f)\in AP$. This value of $\lambda$ is called the
{\em mean motion} of $f$; we will denote it $\kappa(f)$.

Finally, let  \[ AP^\pm=\{f\in AP\colon\Omega(f)\subset\R_\pm\},\]
where of course $\R_\pm=\{ x\in\R\colon\pm x\geq 0\}$. Denote also
\[ APW^\pm = AP^\pm\cap APW,\quad APP^\pm = AP^\pm\cap APP.\] Clearly,
\[ APP^\pm \subset APW^\pm\subset AP^\pm\subset H_\infty^\pm. \]

An $AP$ factorization of $G$, by definition, is a representation
(\ref{unt}) in which $G_\pm$ are subject to the conditions \eq{AP}
G_+^{\pm 1}\in (AP^+)^{n\times n}, \quad G_-^{\pm 1}\in
(AP^-)^{n\times n},\en more restrictive than (\ref{bf}), and the diagonal
entries of $D$ are of the form $d_j=e_{\delta_j}$, $j=1,\ldots,n$. The
real numbers $\delta_j$ are called the (right) partial $AP$ indices of
$G$,  and by an obvious column permutation in $G_\pm$ we may
assume that they are arranged in a non-decreasing order:
$\delta_1\leq\delta_2\leq\dots\leq\delta_n$.

A particular case of $AP$ factorization occurs when conditions
(\ref{AP}) are changed to more restrictive ones: \[ G_+^{\pm 1}\in
(APW^+)^{n\times n}, \quad G_-^{\pm 1}\in (APW^-)^{n\times n},\] or
even
\[ G_+^{\pm 1}\in
(APP^+)^{n\times n}, \quad G_-^{\pm 1}\in (APP^-)^{n\times n}.\]

These are naturally called $APW$ and $APP$ factorization of $G$,
respectively. Of course, $G$ has to be an invertible element of
$AP^{n\times n}$ ($APW^{n\times n}$, $APP^{n\times n}$) in order to
admit an $AP$ (resp., $APW$, $APP$) factorization. Moreover, the
partial $AP$ indices of $G$ should then add up to the mean motion of
its determinant: \eq{mm} \delta_1+\cdots +\delta_n=\kappa(\det
G),\en as can be seen by simply taking determinants of both sides.

 All the statements of Section~\ref{RHP} are valid in these settings,
and some of them can even be simplified. For instance,  a diagonal
element of $D$ is of positive, negative or neutral type  (in the sense of
definitions (\ref{dj1}), (\ref{dj2})) if and only if  the corresponding
partial $AP$ index $\delta_j$ is respectively positive, negative or equal
zero.

Corollary~\ref{cor:C}, for example, applies to $AP$-factorable matrix
functions $G$ with non-negative partial $AP$ indices. Formulas
(\ref{hs}) imply then that all solutions of (\ref{RH}) in
$(H_\infty^\pm)^n$ are automatically in $(AP^\pm)^n$ (and even
$(APW^\pm)^n$ or $(APP^\pm)^n$, provided that $G$ is respectively
$APW$- or $APP$-factorable).

Lemma~\ref{l:canfac} takes the following form.
\begin{thm}\label{th:H}
An $n\times n$ matrix function $G$ admits a canonical $AP$ ($APW$)
factorization if and only if  there exist  $n$ solutions
$(\psi_j^+,\psi_j^-)$ to {\em (\ref{RH})} in $(AP^\pm)^n$ (resp.,
$(APW^\pm)^n$), such that $\det [\psi_1^\pm\ldots \psi_n^\pm]$ are
bounded from zero in $\C^\pm$.
\end{thm}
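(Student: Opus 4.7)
The plan is to mirror the proof of Lemma~\ref{l:canfac} and upgrade it from the bounded to the $AP$ / $APW$ setting, the only real issue being that one must keep the factors inside the prescribed subalgebras.

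For the necessity, I take $G = G_- G_+^{-1}$ a canonical $AP$ (respectively, $APW$) factorization and simply let $\psi_j^\pm$ be the $j$-th column of $G_\pm$. The identity $G G_+ = G_-$, read columnwise, gives $n$ solutions to (\ref{RH}) in $(AP^\pm)^n$ (respectively $(APW^\pm)^n$), and $\det[\psi_1^\pm,\ldots,\psi_n^\pm] = \det G_\pm$ is invertible in $H_\infty^\pm$ because $G_\pm^{-1}$ already lies in the relevant matrix algebra. In particular it is bounded away from zero in $\C^\pm$.

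For the sufficiency, given the $n$ solutions I set $G_\pm = [\psi_1^\pm,\ldots,\psi_n^\pm]$. Then $G_\pm \in (AP^\pm)^{n\times n}$ (resp.\ $(APW^\pm)^{n\times n}$) by hypothesis, and $GG_+ = G_-$ by linearity from (\ref{RH}). The content of the sufficiency is to show $G_\pm^{-1}$ also belongs to the same matrix algebra; then $G = G_- G_+^{-1}$ is the desired canonical factorization. By Cramer's rule the entries of $G_\pm^{-1}$ are cofactors of $G_\pm$ divided by $\det G_\pm$. The cofactors are polynomial in the entries of $G_\pm$, so they automatically sit inside $AP^\pm$ (resp.\ $APW^\pm$). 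The remaining question is whether $(\det G_\pm)^{-1}$ belongs to the same algebra.

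This is where the one nontrivial ingredient enters, and it is the step I expect to be the main obstacle, although it is classical. I will need two facts: first, that $AP$ and $APW$ are each inverse closed as subalgebras of $L_\infty$ (the analog of Wiener's theorem for $AP$ and $APW$); second, that an element of $AP$ which admits a bounded analytic extension to $\C^\pm$ necessarily has Bohr--Fourier spectrum in $\R_\pm$, so that $AP \cap H_\infty^\pm = AP^\pm$, and likewise $APW \cap H_\infty^\pm = APW^\pm$. Applied to $f = \det G_\pm$, these give: $f \in AP^\pm$ (resp.\ $APW^\pm$); $f^{-1}\in L_\infty$ because $f$ is bounded away from zero in $\C^\pm$ and hence on $\R$; inverse closedness then places $f^{-1}\in AP$ (resp.\ $APW$); and $f^{-1}\in H_\infty^\pm$ (because $f$ is invertible in $H_\infty^\pm$) upgrades this to $f^{-1}\in AP^\pm$ (resp.\ $APW^\pm$). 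Plugging back into Cramer's formula yields $G_\pm^{-1}\in(AP^\pm)^{n\times n}$ (resp.\ $(APW^\pm)^{n\times n}$), completing the factorization.
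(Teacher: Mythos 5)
Your proposal is correct and follows the route the paper intends: the paper presents Theorem~\ref{th:H} as the $AP$/$APW$ analogue of Lemma~\ref{l:canfac} with no separate proof, relying implicitly on exactly the two ingredients you isolate — inverse closedness of $AP$, $APW$ in $L_\infty$ (stated explicitly in Section~\ref{s:AP}) and the identifications $AP\cap H_\infty^\pm=AP^\pm$, $APW\cap H_\infty^\pm=APW^\pm$, fed through Cramer's rule. Your write-up is a faithful unpacking of that implicit argument, correctly noting that ``bounded from zero in $\C^\pm$'' is precisely what supplies $(\det G_\pm)^{-1}\in H_\infty^\pm$ before inverse closedness and the $H_\infty^\pm$-intersection are invoked to place it in $AP^\pm$ (resp.\ $APW^\pm$).
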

The respective criterion  for $APP$ factorization is slightly different,
because $APP^\pm$, as opposed to $AP^\pm$ and $APW^\pm$, are
not inverse closed in $H_\infty^\pm$. Moreover, the only invertible
elements of $APP^\pm$ are non-zero constants. Therefore, we arrive
at
\begin{cor}\label{cor:H}
An $n\times n$ matrix function $G$ admits a canonical $APP$
factorization if and only if  there exist  $n$ solutions
$(\psi_j^+,\psi_j^-)$ to {\em (\ref{RH})} in $(APP^\pm)^n$ with
constant non-zero $\det [\psi_1^\pm\ldots \psi_n^\pm]$.
\end{cor}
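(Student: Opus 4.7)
The plan is to deduce this corollary from Lemma~\ref{l:canfac} in the same spirit as Theorem~\ref{th:H}, paying attention to the peculiarity that the only invertible elements of $APP^\pm$ are non-zero constants. Both implications are essentially bookkeeping on top of that observation.

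For necessity, I would start from a canonical $APP$ factorization $G = G_- G_+^{-1}$ with $G_\pm^{\pm 1} \in (APP^\pm)^{n\times n}$. Taking $\psi_j^\pm$ to be the $j$-th column of $G_\pm$ yields $n$ solutions of (\ref{RH}) lying in $(APP^\pm)^n$. Since $G_\pm$ has its inverse in $(APP^\pm)^{n\times n}$, the scalar function $\det G_\pm$ is invertible in $APP^\pm$; by the remark quoted in the excerpt (only constants are invertible in $APP^\pm$), this forces $\det G_\pm = \det[\psi_1^\pm\ldots\psi_n^\pm]$ to be a non-zero constant.

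For sufficiency, given $\psi_j^\pm\in (APP^\pm)^n$ solving (\ref{RH}) with $\det[\psi_1^\pm\ldots\psi_n^\pm]\equiv c_\pm\in\C\setminus\{0\}$, I would set $G_\pm := [\psi_1^\pm\ldots\psi_n^\pm]$. The identity $G\psi_j^+ = \psi_j^-$ for $j=1,\ldots,n$ rewrites as $G G_+ = G_-$, so $G = G_- G_+^{-1}$ once we check that $G_+$ is pointwise invertible and that $G_\pm^{-1}\in(APP^\pm)^{n\times n}$. Cramer's rule gives $G_\pm^{-1} = c_\pm^{-1}\,\mathrm{adj}\,G_\pm$, and since $APP^\pm$ is an algebra, the entries of the adjugate lie in $APP^\pm$; dividing by the non-zero constant $c_\pm$ keeps us in $APP^\pm$. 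Thus $G = G_- G_+^{-1}$ is a canonical $APP$ factorization.

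The only real subtlety here — and the reason the statement differs from Theorem~\ref{th:H} — is exactly why the hypothesis must be \emph{constant} non-zero determinant rather than merely \emph{bounded away from zero}: without invertibility of $\det G_\pm$ inside $APP^\pm$, Cramer's rule would pull the entries of $G_\pm^{-1}$ out of $APP^\pm$ into the larger class $H_\infty^\pm$, destroying the $APP$ character of the factorization. Beyond keeping that point straight, no genuine obstacle arises.
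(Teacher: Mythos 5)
Your proposal is correct and follows exactly the reasoning the paper intends: reduce to Lemma~\ref{l:canfac}, note that invertibility of an $n\times n$ matrix over $APP^\pm$ forces the scalar $\det G_\pm$ to be invertible in $APP^\pm$ and hence a non-zero constant, and recover $G_\pm^{-1}$ from the adjugate divided by that constant. Your closing paragraph on why ``constant'' cannot be weakened to ``bounded away from zero'' is precisely the point the paper flags (that $APP^\pm$, unlike $AP^\pm$ and $APW^\pm$, is not inverse closed), so the argument matches the paper's approach.
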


Similarly to the case in Section~\ref{RHP}, for matrix functions $G$
with constant determinant the condition on $\det [\psi_1^\pm\ldots
\psi_n^\pm]$ holds whenever at least one of them is non-zero at just
one point of $\C^\pm\cup\R$. All non-trivial solutions  to (\ref{RH})
are actually in $CP^\pm$, as guaranteed by Theorem~\ref{th:F}.

{Theorem~\ref{th:A} of course remains valid when $G$ admits an $AP$
factorization; the only change needed  is that $d_j$ in formulas
(\ref{sol}), (\ref{scRH}) should be substituted by $e_{\delta_j}$. For the
homogeneous problem (\ref{RH}) this yields the following.
\begin{thm}\label{th:hap}Let $G$ admit an $AP$ factorization {\em (\ref{unt})}. Then
the general solution of problem {\em (\ref{RH})} in
$(H_\infty^\pm)^n$  is given by \eq{hap}  \phi_+=\sum_j
\psi_jg_j^+,\quad \phi_-=\sum_j e_{\delta_j}\psi_jg_j^-, \en where the
summation is with respect to those $j$ for which $\delta_j\leq 0$,
$\psi_j$ are constant whenever $\delta_j=0$ and satisfy
\eq{conap}\psi_j\in H_\infty^+\cap e_{-\delta_j}H^-_\infty \text{
whenever } \delta_j<0. \en
\end{thm}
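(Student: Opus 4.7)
The strategy is to apply Theorem~\ref{th:A} with $g=0$ and $p=\infty$ to the $AP$ factorization $G=G_-DG_+^{-1}$, in which $d_j=e_{\delta_j}$, and then simplify the scalar conditions (\ref{scRH}) according to the sign of $\delta_j$. Writing a candidate solution in the form
\[ \phi_+=\sum_{j=1}^n\psi_jg_j^+,\qquad \phi_-=\sum_{j=1}^n e_{\delta_j}\psi_jg_j^-, \]
the requirement from (\ref{scRH}) reduces for each index $j$ to $\psi_j\in H_\infty^+$ together with $e_{\delta_j}\psi_j\in H_\infty^-$.

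I would then split into three cases, using the identification (already laid out in the text preceding the theorem) between the type of $e_{\delta_j}$ and the sign of $\delta_j$. When $\delta_j>0$, a direct estimate of $|e^{i\delta_j z}|=e^{-\delta_j\Im z}$ shows that $e_{\delta_j}$ is of strictly positive type, so Lemma~\ref{l:uniq} forces $\psi_j=0$ and the $j$-th term drops out of both sums; this is what the statement records by restricting the summation to $\{j\colon\delta_j\le 0\}$. When $\delta_j=0$, $e_{\delta_j}\equiv 1$ and the constraints collapse to $\psi_j\in H_\infty^+\cap H_\infty^-$, so by a standard Liouville-type argument $\psi_j$ must be a complex constant. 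When $\delta_j<0$, the inclusion $e_{\delta_j}\psi_j\in H_\infty^-$ is equivalent to $\psi_j\in e_{-\delta_j}H_\infty^-$, and together with $\psi_j\in H_\infty^+$ this is precisely (\ref{conap}).

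For the converse direction, any choice of $\psi_j$ fitting the above description yields $\phi_+\in(H_\infty^+)^n$ and $\phi_-\in(H_\infty^-)^n$, since the columns $g_j^\pm$ lie in $(AP^\pm)^n\subset(H_\infty^\pm)^n$; and the identity $G\phi_+=G_-DG_+^{-1}(G_+\psi)=G_-D\psi=\phi_-$ then verifies that (\ref{RH}) holds. The argument is thus essentially bookkeeping on top of Theorem~\ref{th:A} and Lemma~\ref{l:uniq}, with no new analytic input needed. The only delicate point is the systematic translation of the abstract ``positive/neutral/negative type'' terminology of Section~\ref{RHP} into explicit conditions on the exponents $\delta_j$, which is already handled by the remarks immediately preceding the statement, so I anticipate no substantive obstacle.
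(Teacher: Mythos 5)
Your proof matches the paper's own reasoning exactly: the paper treats this statement as an immediate consequence of Theorem~\ref{th:A} with $g=0$ and $p=\infty$, together with the observation (made just before the theorem) that $d_j=e_{\delta_j}$ is of positive, neutral, or negative type according to the sign of $\delta_j$, and Lemma~\ref{l:uniq} to discard the indices with $\delta_j>0$. Your case-by-case bookkeeping is precisely the argument the authors leave implicit.
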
 Observe that $\phi_\pm$ given by (\ref{hap}) belong to $AP^n$ if and only if condition
(\ref{conap}) is replaced by a more restrictive \[ \psi_j\in AP, \quad
\Omega(\psi_j)\subset [0,-\delta_j] \] (where by convention $\psi_j=0$
if $\delta_j>0$), since \eq{conapA}
\psi:=(\psi_j)=G_+^{-1}\phi_+=D^{-1}G_-^{-1}\phi_-.\en

Moreover, if in fact $G$ is $APW$ factorable, then the functions
(\ref{hap}) are in $APW^n$ if and only if
\[ \psi_j\in APW, \quad \Omega(\psi_j)\subset [0,-\delta_j]. \]  Solutions of (\ref{RH}) in $(H_\infty^\pm)^n$ are automatically
in $AP$ ($APW$) if $G$ is $AP$- (resp., $APW$-) factorable with
non-negative partial $AP$ indices, since in this case $D^{-1}\in APP^-$
and (\ref{conapA}) implies that $\psi\in\C^n$.  On the other hand, if
$G$ is $APW$ factorable with at least one negative partial $AP$ index,
then all three classes are distinct. Indeed, for any $j$ corresponding to
$\delta_j<0$ there is a plethora of functions $\psi_j$ satisfying
(\ref{conap}) not lying in $AP$, as well as functions in $AP\setminus
APW$ with the Bohr-Fourier spectrum in $[0,-\delta_j]$.

The case of exactly one non-positive partial $AP$ index is of special
interest.

\begin{cor}\label{th:I} Let $G$ admit an $AP$ factorization with the partial $AP$ indices
$\delta_1\leq 0<\delta_2\leq\cdots$. Then all solutions to {\em
(\ref{RH})} in $(H_\infty^+)^n$ ($AP^n$, $APW^n$) are given by
\eq{2.23} \phi_+=fg_1^+,\quad \phi_-=e_{\delta_1}fg_1^-,\en where $f$
is an arbitrary $H_\infty^+$ function such that $e_{\delta_1}f\in
H^-_\infty$ (resp., $f\in AP$ or $f\in APW$ and $\Omega(f)\subset
[0,-\delta_1]$). \end{cor}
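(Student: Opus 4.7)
The plan is to deduce this corollary directly from Theorem~\ref{th:hap} and the spectrum remarks immediately following it, by specializing to the situation where exactly one partial $AP$ index (namely $\delta_1$) fails to be positive.

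First I would observe that under the hypothesis $\delta_1\leq 0<\delta_2\leq\cdots\leq\delta_n$, the summation in formula (\ref{hap}), which ranges over those $j$ with $\delta_j\leq 0$, collapses to the single term $j=1$. Writing $f:=\psi_1$, formula (\ref{hap}) reduces at once to (\ref{2.23}), so the representation is automatic; all that remains is to identify the admissible class of $f$ in each of the three regularity settings.

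Next I would handle those three settings in turn. For solutions in $(H_\infty^\pm)^n$, I invoke condition (\ref{conap}) with $j=1$: if $\delta_1<0$, this gives $f\in H_\infty^+\cap e_{-\delta_1}H_\infty^-$, equivalent to $f\in H_\infty^+$ and $e_{\delta_1}f\in H_\infty^-$; if $\delta_1=0$, Theorem~\ref{th:hap} forces $\psi_1$ to be a constant, which is consistent with the unified condition in the corollary since $f\in H_\infty^+$ together with $f=e_{\delta_1}f\in H_\infty^-$ forces $f$ to be constant. For the $AP^n$ (respectively $APW^n$) settings, I apply the spectrum remark following Theorem~\ref{th:hap}, which requires $\psi_j\in AP$ (resp., $APW$) with $\Omega(\psi_j)\subset[0,-\delta_j]$. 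Since only the $j=1$ term contributes (the convention $\psi_j=0$ for $\delta_j>0$ being automatic here), this translates to $f\in AP$ (resp., $APW$) with $\Omega(f)\subset[0,-\delta_1]$, exactly as stated.

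There is essentially no obstacle — the proof is pure bookkeeping once Theorem~\ref{th:hap} and its accompanying remarks are in hand. The only mild subtlety is reconciling the degenerate case $\delta_1=0$ with the uniform statement in terms of $f$; but as noted above this is automatic, since joint membership in $H_\infty^+$ and $H_\infty^-$ (equivalently, a Bohr--Fourier spectrum contained in $\{0\}$) forces $f$ to be a constant on its own.
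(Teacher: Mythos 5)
Your proof is correct and follows exactly the argument the paper intends: the corollary is presented as an immediate specialization of Theorem~\ref{th:hap} and the spectrum remarks following it, which is precisely what you carry out by noting that under $\delta_1\leq 0<\delta_2\leq\cdots$ only $j=1$ contributes to the sum in (\ref{hap}), then reading off the admissible class of $f=\psi_1$ from (\ref{conap}) and its $AP$/$APW$ refinements. Your observation that the degenerate case $\delta_1=0$ is subsumed by the unified condition (since $f\in H_\infty^+\cap H_\infty^-$, equivalently $\Omega(f)\subset\{0\}$, forces $f$ constant) is a nice and accurate reconciliation.
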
 }

For $n=2$ the reasoning of Theorem~\ref{th:G} suggests an
appropriate modification of (\ref{RH}) for which some solutions are
forced to lie in $AP$.  Recall our convention $\delta_1\leq\delta_2$
according to which the condition on $d_1,d_2$ in Theorem~\ref{th:G}
holds automatically.

\begin{thm}\label{th:J}
Let $G$ be a $2\times 2$ $AP$ factorable matrix function with  partial
indices $\delta_1$, $\delta_2$ ($\delta_1\leq \delta_2$). Then any
non-zero pair $(\phi_+,\phi_-)$ with $\phi_+\in (H_\infty^+)^2\cap
e_{-\delta_1}(H_\infty^+)^2$, $\phi_-=G\phi_+\in (H_\infty^-)^2$
satisfies \[ \phi_\pm\in (AP^\pm)^2,\quad e_{\delta_1}\phi_+\in
CP^+,\quad \phi_-\in CP^-, \] and in order for such pairs to exist it is
necessary and sufficient that $\delta_1\leq 0$. If $\delta_2>\delta_1$,
all those solutions have the form \[ \phi_+=ce_{-\delta_1}g_1^+\,,\;\;
\phi_-=cg_1^- \;\; \text{with} \;\; c\in\C\setminus\{0\}. \] For
$\delta_2=\delta_1$, $\phi_+$ and $\phi_-$ are the same non-trivial
linear combinations of the columns of $e_{-\delta_1}G_+$ and $G_-$.
\end{thm}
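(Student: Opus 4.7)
The strategy is to invoke Theorem~\ref{th:G} for the corona conclusions and the necessary sign of $\delta_1$, and then Theorem~\ref{th:hap} together with a short Liouville-type bookkeeping to pin down the explicit form of admissible pairs.

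With $d_j=e_{\delta_j}$ and $\delta_1\leq\delta_2$, either $d_1=d_2$ (when $\delta_2=\delta_1$) or $d_1^{-1}d_2=e_{\delta_2-\delta_1}$ lies in $H_\infty^+$ but is non-invertible there, hence of positive type. The hypothesis $d_1\phi_+\in(H_\infty^+)^2$ of Theorem~\ref{th:G} is precisely $\phi_+\in e_{-\delta_1}(H_\infty^+)^2$, so that theorem applies and immediately delivers both corona conditions $e_{\delta_1}\phi_+\in CP^+$, $\phi_-\in CP^-$, together with the necessity of $e_{\delta_1}$ being of non-positive type, i.e.\ $\delta_1\leq 0$. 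For the converse direction, I would witness sufficiency via the explicit pair $\phi_+:=e_{-\delta_1}g_1^+$, $\phi_-:=g_1^-$: since $-\delta_1\geq 0$ gives $e_{-\delta_1}\in AP^+$ and the identity $GG_+=G_-D$ yields $Gg_1^+=e_{\delta_1}g_1^-$, one obtains $G\phi_+=g_1^-$ with both sides in the required spaces.

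To describe every admissible pair, I would apply Theorem~\ref{th:hap} to write $\phi_+=\sum_{j:\delta_j\leq 0}\psi_jg_j^+$ and $\phi_-=\sum_{j:\delta_j\leq 0}e_{\delta_j}\psi_jg_j^-$, with the $\psi_j$ subject to (\ref{conap}). Since $G_+^{-1}\in(AP^+)^{2\times 2}$, multiplying the extra condition $e_{\delta_1}\phi_+\in(H_\infty^+)^2$ on the left by $G_+^{-1}$ converts it into $e_{\delta_1}\psi_j\in H_\infty^+$ for each contributing $j$. When $\delta_j=\delta_1$, $e_{\delta_1}\psi_j$ then lies in $H_\infty^+\cap H_\infty^-$ and is therefore a scalar $c_j$, giving $\psi_j=c_je_{-\delta_1}$. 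When $\delta_j>\delta_1$, multiplying $e_{\delta_1}\psi_j\in H_\infty^+$ by $e_{\delta_j-\delta_1}\in H_\infty^+$ produces $e_{\delta_j}\psi_j\in H_\infty^+$; combined with $e_{\delta_j}\psi_j\in H_\infty^-$ from (\ref{conap}), Liouville forces $\psi_j=ce_{-\delta_j}$, and then $e_{\delta_1}\psi_j=ce_{\delta_1-\delta_j}\in H_\infty^-$ can only belong to $H_\infty^+$ if $c=0$. This furnishes exactly the advertised forms in both cases $\delta_2>\delta_1$ and $\delta_2=\delta_1$; membership $\phi_\pm\in(AP^\pm)^2$ is then immediate since $e_{-\delta_1}\in AP^+$ and the $g_j^\pm$ are columns of $G_\pm\in(AP^\pm)^{2\times 2}$.

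The main obstacle is this second Liouville step, which needs three half-plane analyticity conditions on $\psi_j$ to be threaded simultaneously; everything else is a bookkeeping translation of the general results of Sections~\ref{RHP}--\ref{s:AP}.
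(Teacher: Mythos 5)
Your proof is correct and follows exactly the route the paper intends: the paper states Theorem~\ref{th:J} without an explicit proof, remarking only that "the reasoning of Theorem~\ref{th:G} suggests an appropriate modification of (\ref{RH})" and noting that the convention $\delta_1\le\delta_2$ makes the hypothesis on $d_1,d_2$ automatic. You flesh this out faithfully — Theorem~\ref{th:G} for the corona conclusions and the necessity of $\delta_1\le 0$, the explicit pair $(e_{-\delta_1}g_1^+,g_1^-)$ for sufficiency, and Theorem~\ref{th:hap} together with the Liouville bookkeeping in the $\psi$-coordinates to obtain the stated explicit forms (from which $\phi_\pm\in(AP^\pm)^2$ follows). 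The only thing I would add for completeness is the trivial subcase $\delta_1<\delta_j=0$, where $\psi_j$ is a constant and $e_{\delta_1}\psi_j\in H_\infty^+$ forces $\psi_j=0$ directly; your Liouville argument handles it after a one-line observation, so this is cosmetic rather than a gap.
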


Of course, Theorem~\ref{th:J} holds with $AP$ changed to $APW$ or
$APP$ everywhere in its statement.

Recall that a Toeplitz operator with scalar $AP$ symbol $f$ is
Fredholm on $H_p^+$ for some (equivalently: all) $p\in (1,\infty)$ if
and only if it is invertible if and only if $f$ is invertible in $AP$ with
mean motion zero. Therefore, Theorem~\ref{th:FA} implies
\begin{lem}\label{l:T}Let $G\in AP^{2\times 2}$ be such that there exists a solution
of {\em (\ref{RH})} in $CP^\pm$. Then the Toeplitz operator $T_G$ is
invertible on $(H_p^+)^2$, $1<p<\infty$, if and only if $\kappa(\det
G)=0$. \end{lem}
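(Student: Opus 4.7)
The plan is to chain together Theorem~\ref{th:FA} with the recalled invertibility criterion for scalar Toeplitz operators with $AP$ symbols, so almost no new work is required. First, I would invoke Theorem~\ref{th:FA}: since by hypothesis there is a solution of (\ref{RH}) in $CP^\pm$, the operators $T_G$ on $(H_p^+)^2$ and $T_{\det G}$ on $H_p^+$ are Fredholm only simultaneously and have the same defect numbers. In particular, $T_G$ is invertible (i.e.\ Fredholm with both defect numbers zero) if and only if $T_{\det G}$ is.

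Second, note that $\det G\in AP$. By the scalar fact recalled just before the lemma, $T_{\det G}$ on $H_p^+$ is Fredholm if and only if it is invertible, if and only if $\det G$ is invertible in $AP$ with $\kappa(\det G)=0$. Combining these two equivalences yields the lemma: if $T_G$ is invertible then the defect-number comparison forces $T_{\det G}$ to be invertible too, giving $\kappa(\det G)=0$; conversely, if $\kappa(\det G)=0$ then $T_{\det G}$ is invertible, so both defect numbers of $T_G$ vanish and $T_G$ is invertible as well.

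I do not expect a real obstacle here; the lemma is essentially a one-line consequence of Theorem~\ref{th:FA} plus the standard invertibility criterion for scalar $AP$ Toeplitz operators. The only subtlety worth flagging in the write-up is that the statement ``$\kappa(\det G)=0$'' presupposes $\det G$ to be invertible in $AP$: if it is not, then $T_{\det G}$ fails to be Fredholm, hence so does $T_G$ by Theorem~\ref{th:FA}, so $T_G$ is not invertible and both sides of the equivalence remain consistent.
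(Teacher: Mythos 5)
Your proof is correct and is exactly the argument the paper intends: combine Theorem~\ref{th:FA} (equality of defect numbers of $T_G$ and $T_{\det G}$) with the recalled criterion that a scalar $AP$ Toeplitz operator is Fredholm iff it is invertible iff the symbol is $AP$-invertible with zero mean motion. Your remark that non-invertibility of $\det G$ in $AP$ makes both sides fail consistently is a worthwhile clarification the paper leaves implicit.
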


Passing to the $APW$ setting,  we invoke the result according to
which $T_G$ with $G\in APW^{n\times n}$ is invertible if and only if
$G$ admits a canonical $AP$ (or $APW$) factorization.
Lemma~\ref{l:T} then implies (compare with Theorem~\ref{th:2.7A}):

\begin{thm}\label{th:cocri} Let $G\in APW^{2\times 2}$. Then $G$ admits a canonical $AP$
factorization if and only if $\kappa(\det G)=0$ and problem {\em
(\ref{RH})} has a solution in $CP^\pm$. If this is the case, then every
non-zero solution of {\em (\ref{RH})} is in $(APW^\pm)^2\cap
CP^\pm$. \end{thm}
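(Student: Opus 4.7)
The plan is to read this theorem as a synthesis of Lemma~\ref{l:T} with the $APW$ Toeplitz invertibility criterion recalled just before it, then to extract the structure of solutions from Corollary~\ref{cor:C} and Theorem~\ref{th:F}.

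For necessity, I would start from a canonical $AP$ factorization $G=G_-G_+^{-1}$. Taking determinants yields a bounded canonical $AP$ factorization of $\det G$, which forces $\kappa(\det G)=0$. Lemma~\ref{l:canfac} then identifies each column of $G_\pm$ as a non-trivial solution of (\ref{RH}), and Theorem~\ref{th:F} upgrades it to an element of $CP^\pm$, supplying the required corona solution.

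For sufficiency, assume $\kappa(\det G)=0$ together with the existence of a solution of (\ref{RH}) in $CP^\pm$. Since $G\in APW^{2\times 2}\subset AP^{2\times 2}$, Lemma~\ref{l:T} yields invertibility of $T_G$ on $(H_p^+)^2$ for $1<p<\infty$. The cited criterion (recalled just before the theorem) then promotes this to the existence of a canonical $AP$ (indeed $APW$) factorization of $G$. To handle the final assertion, I would fix such a canonical $APW$ factorization $G=G_-G_+^{-1}$ with $G_\pm^{\pm1}\in (APW^\pm)^{2\times 2}$ and apply Corollary~\ref{cor:C} with $D=I$: every solution of (\ref{RH}) in $(H_\infty^\pm)^2$ is a constant linear combination of the columns of $G_\pm$, hence automatically in $(APW^\pm)^2$; Theorem~\ref{th:F} then places every non-zero such combination in $CP^\pm$.

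The main obstacle, if any, is conceptual rather than computational: the theorem is essentially a bookkeeping synthesis, and the indispensable ingredient is the cited $APW$ equivalence between invertibility of $T_G$ and the existence of a canonical $AP$ (or $APW$) factorization. Without that black box, Lemma~\ref{l:T} alone would deliver only operator invertibility, and one would have to construct the factorization directly from the $CP^\pm$ solution — a considerably more delicate task. With it, the proof reduces to assembling earlier results in the correct order.
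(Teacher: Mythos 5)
Your proof is correct and follows essentially the same route the paper takes: it is precisely the assembly of Lemma~\ref{l:T} with the cited $APW$ invertibility criterion (for the equivalence), and of Corollary~\ref{cor:C} and Theorem~\ref{th:F} (for the structure of solutions and the last assertion), which is exactly what the text ``Lemma~\ref{l:T} then implies\ldots'' is meant to encode. The only point the paper leaves completely implicit and you made explicit --- that a canonical $AP$ factorization of $G$ forces $\kappa(\det G)=0$ via taking determinants and (\ref{mm}) --- is a harmless but welcome addition.
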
 The first part of Theorem~\ref{th:cocri} for $G$
with $\det G\equiv 1$ (so that $\kappa(\det G)=0$ automatically) is in
\cite{BKS1} (see Theorem~23.1 there). Essentially, it was proved in
\cite{BasKS03}, with sufficiency following from Theorems~3.4, 6.1 and
necessity from Theorem~3.5 there.

Our next goal is the $APW$ factorization criterion in the not
necessarily canonical case.

\begin{thm}\label{th:K}
Let $G$ be a ${2\times 2}$ invertible $APW$ matrix function. Denote
$\delta=\kappa(\det G)$. Then $G$ admits an $APW$ factorization if
and only if  the Riemann-Hilbert problem  \eq{2.28}
e_{-\frac{\delta}{2}} G\psi_+=\psi_-\,,\; \psi_\pm\in (APW^\pm)^2 \en
admits a solution $(\psi_+,\psi_-)$ such that \eq{2.29}
\tilde\psi_+:=e_{-\tilde \delta}\psi_+\in CP^+\text{ for some } \tilde
\delta\geq 0 \text{ and }\psi_-\in CP^-.\en If this is the case, then the
partial $AP$ indices of $G$  are $\delta_1=-\tilde
\delta+\frac{\delta}{2}$, $\delta_2=\tilde \delta+\frac{\delta}{2}$ and
the factors $G_\pm$ can be chosen in such a way that $\tilde\psi_+$
is the first column of $G_+$ and $\psi_-$ is the first column of $G_-.$
\end{thm}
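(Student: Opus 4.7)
I reduce to $\tilde G := e_{-\delta/2}G$, for which $\kappa(\det\tilde G) = 0$. If $G = G_-\diag(e_{\delta_1},e_{\delta_2})G_+^{-1}$ is an $APW$ factorization with $\delta_1 \leq \delta_2$, set $\tilde\delta := (\delta_2-\delta_1)/2 \geq 0$; then $\tilde G = G_-\diag(e_{-\tilde\delta},e_{\tilde\delta})G_+^{-1}$. Taking $\psi_+ := e_{\tilde\delta}g_1^+$ and $\psi_- := g_1^-$ with $g_1^\pm$ the first columns of $G_\pm$, a direct substitution shows that $(\psi_+,\psi_-)$ solves (\ref{2.28}), while $\tilde\psi_+ = g_1^+$ and $\psi_- = g_1^-$ lie in $CP^\pm$ because they are first columns of matrices invertible in $(APW^\pm)^{2\times 2}$. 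This also pins the partial indices to $\delta_{1,2} = \delta/2 \mp \tilde\delta$.

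\textbf{Sufficiency, Step 1 (triangular reduction).} Assume (\ref{2.28})--(\ref{2.29}). The plan is to produce an $APW$ factorization $\tilde G = G_-\diag(e_{-\tilde\delta},e_{\tilde\delta})G_+^{-1}$ with prescribed first columns $\tilde\psi_+$ and $\psi_-$; multiplication by $e_{\delta/2}$ then yields the desired factorization of $G$. Using $\psi_+ = e_{\tilde\delta}\tilde\psi_+$, relation (\ref{2.28}) becomes $\tilde G\tilde\psi_+ = e_{-\tilde\delta}\psi_-$. The $APW$ corona theorem allows me to complete $\tilde\psi_+$ and $\psi_-$ to determinant-one matrices $P_+ \in (APW^+)^{2\times 2}$ and $P_- \in (APW^-)^{2\times 2}$. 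Since $P_-^{-1}\psi_- = (1,0)^T$, the matrix $P_-^{-1}\tilde G P_+$ has first column $e_{-\tilde\delta}(1,0)^T$; hence it is upper triangular with diagonal $(e_{-\tilde\delta},\,e_{\tilde\delta}\det\tilde G)$ (the second entry is forced by the determinant) and some super-diagonal entry $a \in APW$.

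\textbf{Sufficiency, Step 2 (factoring the triangular remainder).} Since $\det\tilde G \in APW$ and $\kappa(\det\tilde G) = 0$, scalar $APW$ factorization theory supplies a canonical $APW$ factorization $\det\tilde G = c_-c_+^{-1}$ with $c_\pm$, $c_\pm^{-1} \in APW^\pm$. I seek $L_\pm = \left(\begin{smallmatrix}1 & \alpha_\pm \\ 0 & c_\pm\end{smallmatrix}\right)$, invertible in $(APW^\pm)^{2\times 2}$, satisfying $P_-^{-1}\tilde G P_+ = L_-\diag(e_{-\tilde\delta},e_{\tilde\delta})L_+^{-1}$; the $(1,1)$, $(2,1)$, $(2,2)$ entries match automatically and the remaining condition is
\[
e_{\tilde\delta}\alpha_- - e_{-\tilde\delta}\alpha_+ = ac_+,\qquad \alpha_\pm \in APW^\pm.
\]
Splitting $f := ac_+ \in APW$ as $f_1 + f_2 + f_3$ with Bohr--Fourier spectra contained in $(-\infty,-\tilde\delta)$, $[-\tilde\delta,\tilde\delta]$, $(\tilde\delta,\infty)$ respectively, the choices $\alpha_- := e_{-\tilde\delta}(f_1+f_2)$ and $\alpha_+ := -e_{\tilde\delta}f_3$ have spectra in $(-\infty,0]$ and $[0,\infty)$ and solve the equation.

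\textbf{Conclusion and main obstacle.} Setting $G_\pm := P_\pm L_\pm$ yields the desired $APW$ factorization of $\tilde G$ and hence of $G$, with partial $AP$ indices $\delta/2 \mp \tilde\delta$ and with $\tilde\psi_+$, $\psi_-$ as the first columns of $G_+$ and $G_-$. The main technical obstacle is supplying the two background ingredients of Step 2: the $APW$ corona theorem (used to build $P_\pm$) and the scalar canonical $APW$ factorization of $\det\tilde G$ (which rests on Bohr's mean-motion theorem for $AP$ together with the inverse-closedness of $APW$). Once these classical tools are in hand, the Bohr--Fourier spectral split that solves the triangular equation is elementary.
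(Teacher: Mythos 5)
Your proposal is correct and follows essentially the same route as the paper: necessity by reading the first columns of an existing factorization, sufficiency by using the $APW$ corona theorem to complete $\tilde\psi_+,\psi_-$ to invertible matrices, conjugating to an upper triangular form, factoring the scalar determinant, and then splitting the off-diagonal entry into $APW^\pm$ pieces (which uses $\tilde\delta\geq 0$). The only cosmetic differences are that you reduce to $\tilde G=e_{-\delta/2}G$ at the outset and fold the scalar factors $c_\pm$ directly into the triangular matrices $L_\pm$, whereas the paper keeps $G$ and extracts $\gamma_\pm$ as separate diagonal factors; the substance is identical.
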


\begin{proof}
If $G$ admits an $APW$ factorization, then $\delta=\delta_1+\delta_2$
due to (\ref{mm}). In its turn,
$\psi_+=e_{\frac{\delta}{2}-\delta_1}g_1^+$\,,\, $\psi_-=g_1^-$  is a
solution of  (\ref{2.28})  if $\frac{\delta}{2}-\delta_1\ge 0$. It remains to
set  $\tilde \delta=\frac{\delta}{2}-\delta_1$ in order to satisfy
(\ref{2.29}) by analogy with Theorem~\ref{th:J}. Formulas
$\delta_1=\frac{\delta}{2}-\tilde\delta$,
$\delta_2=\frac{\delta}{2}+\tilde\delta$  for the partial $AP$ indices
then also hold.

Suppose now that  (\ref{2.28}) has a solution for which (\ref{2.29})
holds.  From the corona theorem in the $APW$ setting (see
\cite[Chapter 12]{BKS1}),  there exist $h_\pm=(h_{1\pm},h_{2\pm})\in
(APW^\pm)^2$ such that \eq{2.30}
\psi_{1-}h_{1-}+\psi_{2-}h_{2-}=1\,,\;\; e_{-\tilde
\delta}(\psi_{1+}h_{1+}+\psi_{2+}h_{2+})=1. \en In other words,
the matrix functions \eq{2.31} H_+=\left[%
\begin{array}{cc}
  e_{-\tilde\delta}\psi_{1+} & -h_{2+} \\
e_{-\tilde\delta}\psi_{2+} & h_{1+} \\
\end{array}%
\right]\;\;\; \text{and}\;\;\; H_-=\left[%
\begin{array}{cc}
  \psi_{1-} & -h_{2-} \\
\psi_{2-} & h_{1-} \\
\end{array}%
\right]\en have determinants equal to $1$ and are therefore
invertible in $(APW^+)^{2\times 2}$ and $(APW^-)^{2\times 2}$
respectively. Thus the matrix functions $G_1=H_-^{-1}GH_+$ and $G$
are only simultaneously $APW$ factorable, and their partial $AP$
indices coincide.

For the first column of $G_1$, taking (\ref{2.30}) into account, we have
\[
e_{-\tilde\delta}H_-^{-1}G\psi_+=e_{\frac{\delta}{2}-\tilde\delta}H_-^{-1}\psi_-=\left[%
\begin{array}{c}
  e_{\frac{\delta}{2}-\tilde\delta}  \\
0  \\
\end{array}%
\right]. \] Thus the second diagonal entry in $G_1$ must be equal
to
$$e_{\tilde\delta-\frac{\delta}{2}}\det G=\gamma_-e_{\tilde\delta+\frac{\delta}{2}}\gamma_+^{-1},$$
where \[ \det G = \gamma_- e_\delta \gamma_+^{-1} \] is a
factorization of the scalar $APW$ function $\det G$. Consequently,
\eq{2.33} G_1=\left[%
\begin{array}{cc}
 1  & 0 \\
  0 & \gamma_- \\
\end{array}%
\right] \left[%
\begin{array}{cc}
e_{\frac{\delta}{2}-\tilde\delta}  & g \\
  0 & e_{\frac{\delta}{2}+\tilde\delta}\\
\end{array}%
\right]  \left[%
\begin{array}{cc}
1 & 0 \\
  0 & \gamma_+\\
\end{array}%
\right]^{-1}\en with $g\in APW$ given by $g=[1\; \;0]G_1 [0\;\;
\gamma_+]^T$. Finally, the middle factor in the right-hand side of
(\ref{2.33}) is $APW$ factorable with the partial indices
$\frac{\delta}{2}-\tilde\delta$, $\frac{\delta}{2}+\tilde\delta$ equal to
the mean motions of its diagonal entries:
\eq{2.34}\left[%
\begin{array}{cc}
e_{\frac{\delta}{2}-\tilde\delta}  & g \\
  0 & e_{\frac{\delta}{2}+\tilde\delta}\\
\end{array}%
\right] =\left[%
\begin{array}{cc}
 1  & g_-\\
  0 & 1 \\
\end{array}%
\right] \left[%
\begin{array}{cc}
e_{\frac{\delta}{2}-\tilde\delta}  & 0 \\
  0 & e_{\frac{\delta}{2}+\tilde\delta}\\
\end{array}%
\right] \left[%
\begin{array}{cc}
 1  & -g_+\\
  0 & 1 \\
\end{array}%
\right]^{-1}.\en The only condition on  $g_\pm\in APW^\pm$ is
\eq{2.35} g
e_{\tilde\delta-\frac{\delta}{2}}=g_++g_-e_{2\tilde\delta},\en and it can
be satisfied  since $\tilde\delta\geq 0$.  Clearly, making use of
(\ref{descan}) we can always choose $G_\pm$ in such a way that
$\tilde\psi_+$ is the first column of $G_+$ and $\psi_-$ is the first
column of $G_-$.
\end{proof}
The proof of the preceding theorem provides, via (\ref{2.31}),
(\ref{2.33})--(\ref{2.35}), formulas for an $APW$ factorization of
$G=H_-^{-1}G_1H_+$, in terms of the solutions to (\ref{2.28}) and the
corona problems  (\ref{2.30}).


\section{Applications to a class of matrices with a spectral gap near zero}\label{applications}

We consider now the factorability problem for a class of
triangular matrix functions, closely related to the study of
convolution equations on an interval of finite length $\lambda$
(see, e.g., \cite[Section 1.7]{BKS1} and references therein), of the form \eq{4.1} G=\left[%
\begin{array}{cc}
  e_{-\lambda} & 0 \\
 g & e_\lambda \\
\end{array}%
\right] . \en Throughout this section we assume that
\begin{equation}\label{S5.2}
    g=a_-e_{-\beta} + a_+e_\nu \text{ for some } a_\pm\in H_\infty^\pm \text{ and } 0\leq\nu,
    \beta\leq\lambda,\ \nu+\beta>0.
\end{equation}

Representation (\ref{S5.2}), when it exists, is not unique. In particular,
it can be rewritten as \[ g=\tilde a_- e_{-\tilde\beta}+\tilde
a_+e_{\tilde\nu} \]  with  \eq{varrep} \tilde\nu\in [0,\nu], \
\tilde\beta\in [0,\beta],\  \tilde a_+=a_+e_{\nu-\tilde\nu},\  \tilde
a_-=a_-e_{\tilde\beta-\beta}.\en

Among all the representations (\ref{S5.2}) choose those with the
smallest  possible value of \eq{Ng} N=
\left\lceil\frac{\lambda}{\nu+\beta}\right\rceil,  \en where as usual
$\lceil x \rceil$ denotes the smallest integer which is greater or equal
to $x\in\R$. Of course, $N\geq 1$ due to the positivity of
$\frac{\lambda}{\nu+\beta}$.

Formula (\ref{Ng}) means that \[ N-1< \frac{\lambda}{\nu+\beta}\leq
N.\]  Decreasing $\beta,\nu$ as described in (\ref{varrep}), we may
turn the last inequality into an equality. In other words, without loss of
generality we may (and will) suppose that \eq{S5.4}
\frac{\lambda}{\nu+\beta}= N \en is an integer.

We remark that even under condition (\ref{S5.4}) representation
(\ref{S5.2}) may not be defined uniquely.

 Given $N\geq 1$, we denote by $S_{\lambda,N}$ the
class of functions $g$ satisfying (\ref{S5.2}),  (\ref{S5.4}) for which
\begin{equation}\label{S5.15}
  b_+:=  e_{\frac{\beta}{N-1}}a_-\in H_\infty^+,\quad  b_-:= e_{-\frac{\nu}{N-1}}a_+\in H_\infty^-\text{ if }N>1.
\end{equation}
By ${\mathfrak{S}}_{\lambda,N}$ we denote the class of $2\times 2$
matrix functions $G$ of the form (\ref{4.1}) with $g\in S_{\lambda,N}$.
\begin{rmk}\label{rmk:S5.1}
If $g\in S_{{\lambda,N}}$ with $N>1$, then necessarily in {\em
(\ref{S5.2})} $\beta,\nu>0$. Indeed, if say $\nu=0$, then {\em
(\ref{S5.15})} implies that $a_+$ is a constant. Consequently, $g\in
H_\infty^-$, and setting $a_-=g$, $a_+=0$, $\beta=0$, $\nu=\lambda$
in {\em (\ref{S5.2})} would yield $N=1$
--- a contradiction with our convention to choose the smallest
possible value of $N$. Note also that, due to {\em (\ref{S5.15})},
$a_\pm$ are entire functions when $N>1$.
\end{rmk}
 We start by determining a solution to (\ref{RH}) for $G$ in
${{\mathfrak{S}}}_{\lambda,N}$.

\begin{thm}\label{th:S5.4}
Let $G\in {{\mathfrak{S}}}_{{\lambda,N}}$, with $g$ given by {\em
(\ref{S5.2})}.  Then
\begin{eqnarray}
\phi_{1+} = e_{\lambda-\nu} \sum_{j=0}^{N-1}\left((-1)^j a_+^{N-1-j}a_-^j\,
e_{-j\frac{\lambda}{N}}\right)\hspace{0.2cm}&,& \phi_{2+}= -a_+^N\,,\label{S5.16}\\
\phi_{1-} = e_{-\lambda}\phi_{1+}\hspace{4.2cm} &,&  \phi_{2-}=(-1)^{N-1}a_-^N\, \hspace{0.5cm} \label{S5.17}
\end{eqnarray} deliver a solution $\phi_\pm=(\phi_{1\pm},\phi_{2\pm})$ to the Riemann-Hilbert problem {\em (\ref{RH})}.
\end{thm}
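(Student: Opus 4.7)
My plan is to prove this by direct substitution, splitting the verification into two tasks: (i) checking that the pair $(\phi_+, \phi_-)$ defined by \eqref{S5.16}, \eqref{S5.17} satisfies the identity $G\phi_+ = \phi_-$ as equalities of formal expressions in $e_\lambda, a_\pm$, and (ii) checking that $\phi_+$ lies in $(H_\infty^+)^2$ and $\phi_-$ in $(H_\infty^-)^2$.

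For the algebraic identity, the first row of $G\phi_+ = \phi_-$ reads $e_{-\lambda}\phi_{1+} = \phi_{1-}$, which is just the definition \eqref{S5.17}. For the second row I must verify
\[
g\phi_{1+} + e_\lambda \phi_{2+} = \phi_{2-}.
\]
The key simplification is the integrality condition \eqref{S5.4}, which gives $\lambda/N = \nu+\beta$, so that the exponential $e_{-j\lambda/N}$ in \eqref{S5.16} equals $e_{-j(\nu+\beta)}$. I would then expand $g\phi_{1+}$ as the sum of the two contributions coming from $a_- e_{-\beta}\cdot\phi_{1+}$ and $a_+ e_\nu\cdot\phi_{1+}$. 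After an index shift $j \mapsto j-1$ in the first sum, the two become essentially the same sum with opposite signs; the interior terms cancel (telescoping) and only the two boundary terms $a_+^N e_\lambda$ and $(-1)^{N-1} a_-^N$ survive. The first of these is killed by $e_\lambda\phi_{2+} = -a_+^N e_\lambda$, and the second equals $\phi_{2-}$.

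For the analyticity verification, $\phi_{2+} = -a_+^N$ and $\phi_{2-} = (-1)^{N-1}a_-^N$ belong trivially to $H_\infty^\pm$ since $a_\pm \in H_\infty^\pm$. For $\phi_{1+}$, the generic term (after substituting $\lambda = N(\nu+\beta)$) has the form $(-1)^j a_+^{N-1-j} a_-^j e_{(N-1-j)\nu + (N-j)\beta}$. The factor $a_+^{N-1-j}$ already lies in $H_\infty^+$ when paired with $e_{(N-1-j)\nu}$ (use just $a_+ \in H_\infty^+$; the exponent is nonnegative), while $a_-^j e_{(N-j)\beta}$ requires the condition \eqref{S5.15}: writing $a_- = b_+ e_{-\beta/(N-1)}$ with $b_+ \in H_\infty^+$, a short bookkeeping shows $a_-^j e_{(N-j)\beta} = b_+^j e_{N(N-1-j)\beta/(N-1)} \in H_\infty^+$ for $0 \le j \le N-1$. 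The argument for $\phi_{1-} = e_{-\lambda}\phi_{1+}$ is symmetric, invoking instead $b_- = e_{-\nu/(N-1)} a_+ \in H_\infty^-$. The case $N=1$ is trivial and needs no appeal to \eqref{S5.15}.

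The main obstacle is not conceptual but combinatorial: organizing the exponents so that the telescoping is visible and so that \eqref{S5.15} can be applied cleanly to each factor. In particular, one must carefully exploit the fact that the exponents $(N-1-j)\nu + (N-j)\beta$ are precisely the ones for which the cumulative shift introduced by powers of $a_\pm$ is absorbed by the prefactor $e_{\lambda-\nu}$. Once the exponents are tracked correctly, the whole verification reduces to elementary manipulations.
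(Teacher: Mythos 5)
Your proof is correct and takes essentially the same route as the paper: the telescoping you carry out in the second row of $G\phi_+=\phi_-$ is exactly the paper's appeal to the polynomial identity $x^N+(-1)^{N-1}y^N=(x+y)\sum_{j=0}^{N-1}(-1)^j x^{N-1-j}y^j$, and the analyticity check via $a_-=b_+e_{-\beta/(N-1)}$ and $a_+=b_-e_{\nu/(N-1)}$ matches the paper's rewriting of $\phi_{1+}$ and $\phi_{1-}$ in terms of $b_\pm$ from (\ref{S5.15}), with the $N=1$ case handled separately in both.
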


\begin{proof}
A direct computation based on the equality \[
x^N+(-1)^{N-1}y^N=(x+y)\sum_{j=0}^{N-1}\left((-1)^j
x^{N-1-j}\,y^j\right)
\] shows that $G\phi_+=\phi_-$.  Obviously, $\phi_{2\pm}\in
H_\infty^\pm$. So, it remains to prove only that $\phi_{1\pm}\in
H_\infty^\pm$. For $N=1$, this is true because the definition of
$\phi_{1+}$ from (\ref{S5.16}) collapses to $\phi_{1+}=e_\beta$. The
case $N>1$ is slightly more involved.

Namely, for $N>1$ from (\ref{S5.15}) it follows that
$$e_{\frac{\beta}{N-1}}a_-=b_+ \in H_\infty^+,$$
so that \eq{4.9} \phi_{1+} = \sum_{j=0}^{N-1}\left((-1)^j
a_+^{N-1-j}\,b_+^j \,e_{\beta-j\frac{\beta}{N-1}}\,e_{\lambda-(j+1)
\frac{\lambda}{N}}\right)\in H_\infty^+\,.\en Analogously, from
$$e_{-\frac{\nu}{N-1}}a_+=b_-\in H_\infty^-$$
we have \eq{Y2} \phi_{1-} = \sum_{j=0}^{N-1}\left((-1)^j \,b_-^{N-1-j}\,
{a}_-^j \,e_{-j(\frac{\nu}{N-1}+\frac{\lambda}{N})}\right)\in
H_\infty^-\,.\en
\end{proof}

This theorem, along with Theorem \ref{th:FA}, allows to establish
sufficient conditions, which in some cases are also necessary, for
invertibility in $(H_p^+)^2$, $p>1$, of Toeplitz operators with symbol
$G\in {{\mathfrak{S}}}_{{\lambda,N}}$. To invoke Theorem \ref{th:FA},
however, we need to be able to check when the pairs $(\phi_{1\pm},
\phi_{2\pm})$ defined by (\ref{S5.16}), (\ref{S5.17}) belong to $CP^+$
or $CP^-$. The following result from \cite{CaDi08} (see Theorem~2.3
there) will simplify this task.
\begin{thm}\label{th:CD} Let a $2\times 2$ matrix function $G$ and its inverse $G^{-1}$
be analytic and bounded in a strip
\begin{equation}\label{S.5.17strip}
S=\{\xi\in\C : -\varepsilon_2<\Im \xi<\varepsilon_1\}\;\;\;
\text{with}\;\;\; \varepsilon_1, \varepsilon_2\in [0,+\infty[\,,
\end{equation}
and let $\phi_\pm\in (H_\infty^\pm)^2$ satisfy {\em (\ref{RH})}. Then
$\phi_+\in CP^+$ (resp. $\phi_-\in CP^-$) if and only if
\begin{equation}\label{S.5.17B}
\inf_{\C^++i\varepsilon_1}
(|\phi_{1+}|+|\phi_{2+}|)>0\hspace{0.5cm}\left(\text{resp.,
\;}\inf_{\C^--i\varepsilon_2} (|\phi_{1-}|+|\phi_{2-}|)>0\right)
\end{equation}
and one of the following (equivalent) conditions is satisfied:
\begin{eqnarray}
  \inf_S (|\phi_{1+}|+|\phi_{2+}|) & > & 0, \label{S.5.17C}\\
  \inf_S (|\phi_{1-}|+|\phi_{2-}|) & > & 0.\label{S.5.17D}
\end{eqnarray}
\end{thm}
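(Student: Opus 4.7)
The plan is to first show that $\phi_+$ and $\phi_-$ both extend to analytic, bounded functions on the entire strip $S$, which both reduces the equivalence of (\ref{S.5.17C}) and (\ref{S.5.17D}) to a routine estimate and provides the geometric picture for comparing $CP^\pm$ with the strip conditions. Since $G,G^{-1}\in H_\infty(S)^{2\times 2}$ and $\phi_-=G\phi_+$ a.e.\ on $\R$, the function $G^{-1}\phi_-$ is analytic and bounded in $S\cap\C^-$ and has boundary values matching $\phi_+$ on $\R$; by the identity principle for boundary values this provides an analytic continuation of $\phi_+$ to $S\cap\C^-$, hence to $\C^+\cup S$. The same construction (using $G\phi_+$) extends $\phi_-$ to $\C^-\cup S$, and on $S$ the identity $\phi_-=G\phi_+$ holds throughout. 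From the pointwise bounds $\|\phi_-(z)\|\le\|G\|_{\infty,S}\|\phi_+(z)\|$ and $\|\phi_+(z)\|\le\|G^{-1}\|_{\infty,S}\|\phi_-(z)\|$ on $S$, the infima in (\ref{S.5.17C}) and (\ref{S.5.17D}) are positive simultaneously.

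For the sufficiency direction, I decompose $\C^+=(\C^++i\varepsilon_1)\cup\{z\colon 0<\Im z\le\varepsilon_1\}$. Condition (\ref{S.5.17B}) handles the first piece. The second piece is contained in $S\cup\{\Im z=\varepsilon_1\}$; on $S$ itself (\ref{S.5.17C}) gives the lower bound, and the line $\Im z=\varepsilon_1$ is handled by continuity of $\phi_+$, which is analytic on the open set $\C^+\cup S$, approaching the line from either side where a uniform lower bound already holds. Taking the infimum over the union yields $\phi_+\in CP^+$. The argument for $\phi_-\in CP^-$ is symmetric, using the analogous decomposition of $\C^-$ and condition (\ref{S.5.17D}).

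For necessity, (\ref{S.5.17B}) is immediate because $\C^++i\varepsilon_1\subset\C^+$ when $\varepsilon_1\ge 0$. The substantive content is producing (\ref{S.5.17C}) from $\phi_+\in CP^+$. On $S\cap\C^+$ the bound is automatic (subset of $\C^+$). On $\R$, the corona theorem yields $h_1,h_2\in H_\infty^+$ with $h_1\phi_{1+}+h_2\phi_{2+}=1$, giving $\|\phi_+\|\ge 1/\max(\|h_1\|_\infty,\|h_2\|_\infty)$ a.e.\ on $\R$; since the extended $\phi_+$ is continuous on $\R$, this lower bound holds pointwise there. The delicate part is transporting the bound into $S\cap\C^-$: here one combines the boundary estimate $\|\phi_-\|\ge c/\|G^{-1}\|_{\infty,S}$ on $\R$ (obtained by multiplying the corona identity by $G^{-1}$ and using $\phi_+=G^{-1}\phi_-$ on $\R$) with the fact that $\phi_-$ extends analytically through $\R$ into $S$, and then argues that this boundary lower bound propagates to a uniform lower bound on the whole strip via a Phragm\'en--Lindel\"of or normal family argument tailored to the strip geometry. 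The bound for $\phi_+=G^{-1}\phi_-$ on $S\cap\C^-$ then follows from boundedness of $G$ on $S$.

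The hard part is precisely this last propagation step: the corona condition on $\C^+$ sees only the half-plane, while (\ref{S.5.17C}) asks for a bound in a region that extends into $\C^-$, and the RH structure together with the strip analyticity of $G^{\pm 1}$ must be used essentially to rule out the vanishing of the extended $\phi_+$ inside $S\cap\C^-$. This is the one place where a purely geometric decomposition is insufficient and a genuine analytic-continuation argument is needed; for the detailed execution the reader is referred to \cite{CaDi08}.
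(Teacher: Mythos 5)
The paper does not prove Theorem~\ref{th:CD}; it is quoted from \cite{CaDi08} (Theorem~2.3 there), so there is no internal proof to compare against. Your first three steps are correct and are certainly part of any proof: the analytic continuation of $\phi_+$ to $\C^+\cup S$ (via $G^{-1}\phi_-$) and of $\phi_-$ to $\C^-\cup S$ (via $G\phi_+$); the equivalence of (\ref{S.5.17C}) and (\ref{S.5.17D}) from the two-sided pointwise bounds on $S$; and the sufficiency argument by decomposing $\C^+$ into $\C^++i\varepsilon_1$, the open substrip, and the line $\Im z=\varepsilon_1$.

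The gap you leave in the necessity direction is not one that a Phragm\'en--Lindel\"of or normal-family argument can close, because the claim you are trying to justify there is false. Read literally as ``$\phi_+\in CP^+$ iff (\ref{S.5.17B}) and (\ref{S.5.17C})'', the ``only if'' fails: take $0<\delta<M$ and set $G=\diag\bigl(\tfrac{z+iM}{z-iM},\tfrac{z+2iM}{z-2iM}\bigr)$, $\phi_+=\bigl(\tfrac{z+i\delta}{z+iM},\tfrac{z+i\delta}{z+2iM}\bigr)$, $\phi_-=G\phi_+=\bigl(\tfrac{z+i\delta}{z-iM},\tfrac{z+i\delta}{z-2iM}\bigr)$. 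Then $G^{\pm1}$ are analytic and bounded on any strip $|\Im z|<\varepsilon<M$, $\phi_\pm\in(H_\infty^\pm)^2$, and $\phi_+\in CP^+$ because $\phi_{1+}$ is already invertible in $H_\infty^+$ (its modulus is bounded below by $\delta/M$ on $\C^+$). But the analytic continuation of $\phi_+$ vanishes at $z=-i\delta$, so once $\varepsilon_2>\delta$ the infimum in (\ref{S.5.17C}) is zero. The corona condition in $\C^+$ genuinely does not see $S\cap\C^-$, and a boundary lower bound on $\R$ never propagates inward by itself --- one would first need nonvanishing inside, which is exactly what fails here. So the ``one place where a genuine analytic-continuation argument is needed'' is, in fact, a place where no argument exists under the single hypothesis $\phi_+\in CP^+$.

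The theorem is true, and its necessity part becomes a one-line estimate, once the ``(resp.)'' phrasing is read as a conjunction --- i.e.\ as the single equivalence ``$\phi_+\in CP^+$ \emph{and} $\phi_-\in CP^-$ $\Longleftrightarrow$ both inequalities in (\ref{S.5.17B}) hold and one of (\ref{S.5.17C}), (\ref{S.5.17D}) holds''. This is also how the theorem is actually used in the paper: the conclusion of Theorem~\ref{th:Inv} is phrased ``$\phi_\pm\in CP^\pm$'', meaning both. Under that reading, necessity of (\ref{S.5.17C}) needs no propagation: on $S\cap\C^+$ use $\phi_+\in CP^+$; on $S\cap\C^-$, where the extension satisfies $\phi_+=G^{-1}\phi_-$, use $\phi_-\in CP^-$ together with $\|\phi_+(z)\|\ge\|G\|_{\infty,S}^{-1}\|\phi_-(z)\|$; and handle the line $\R$ by the same continuity remark you already use for sufficiency. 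Replace your Phragm\'en--Lindel\"of sketch by this two-sided estimate and the proof is complete.
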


Here and in what follows, we identify the functions $\phi_{1+},
\phi_{2+}$ (resp., $\phi_{1-}, \phi_{2-}$) with their analytic extensions
to $\C^+-i\varepsilon_2$ (resp. $\C^++i\varepsilon_1$) and, for any
real-valued function $\phi$ defined on $S$, abbreviate $\inf_{\zeta\in
S}\phi(\zeta)$ to $\inf_S\phi$.

We will see that for $G\in {{\mathfrak{S}}}_{{\lambda,N}}$, $N\geq 1$,
the behavior of the solutions ``at infinity", that is, condition
(\ref{S.5.17B}) for sufficiently big  $\varepsilon_1,\varepsilon_2>0$, is
not difficult to study.   Therefore, due to Theorem~\ref{th:CD}, we will
be left with studying the behavior of $\phi_+$ or $\phi_-$ in a strip of
the complex plane. According to the next result this, in turn, can be
done in term of the functions $a_\pm$ from (\ref{S5.2}) or,
equivalently, of $g_\pm$ defined by
\[
g_+=e_\nu a_+\,, \;\;\; g_-=e_{\nu-\frac{\lambda}{N}}a_-
\]
It should be noted that, for $N>1$, $a_\pm$ and $g_\pm$ are entire
functions. Moreover, even if the behaviour of $a_+$ and $a_-$ in a strip
$S$ may be difficult to study, it is clear from (\ref{S5.16}) and
(\ref{S5.17}) that this is in general a much simpler task than that of
checking whether (\ref{S.5.17B}) is satisfied using the expressions for
$\phi_{1\pm}, \phi_{2\pm}$.
\begin{lem}\label{th:TCD}
Let $G\in {{\mathfrak{S}}}_{{\lambda,N}}$ for some $N>1$, and let
$\phi_\pm$ be given by {\em (\ref{S5.16}), (\ref{S5.17})}. Then for any
strip {\em (\ref{S.5.17strip})} we have
\begin{equation}\label{S.5.17FA}
\inf_S (|\phi_{1+}|+|\phi_{2+}|) > 0 \Longleftrightarrow \inf_S
(|a_{+}|+|a_{-}|) > 0 \Longleftrightarrow \inf_S (|g_{+}|+|g_{-}|) >
0.
\end{equation}
\end{lem}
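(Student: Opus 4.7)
The plan is to handle the two equivalences in the statement separately, the second being essentially trivial and the first requiring the bulk of the argument.

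First, on any horizontal strip $S$ the function $|e_\mu(z)|=e^{-\mu\Im z}$ is bounded above and below by positive constants depending only on $S$ and $\mu$. Since $g_+=e_\nu a_+$ and $g_-=e_{\nu-\lambda/N}a_-$, it follows that $|g_\pm(z)|$ is comparable to $|a_\pm(z)|$ uniformly on $S$, which immediately gives the equivalence $\inf_S(|a_+|+|a_-|)>0\Longleftrightarrow\inf_S(|g_+|+|g_-|)>0$.

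For the first equivalence, I would begin by recasting $\phi_{1+}$ in a more usable form. Setting $X:=a_+e_\nu$ and $Y:=a_-e_{-\beta}$ and using $\lambda/N=\nu+\beta$ from (\ref{S5.4}), a direct computation from (\ref{S5.16}) gives
$$\phi_{1+}=e_{N\beta}\sum_{j=0}^{N-1}(-1)^j X^{N-1-j}Y^j,\qquad \phi_{2+}=-a_+^N,$$
with $|X|$ and $|Y|$ respectively comparable to $|a_+|$ and $|a_-|$ on $S$. A key preliminary observation is that for $N>1$ the functions $a_\pm$ are bounded on $S$: by Remark \ref{rmk:S5.1} they are entire, while $a_+\in H_\infty^+$ is bounded in $\C^+$ and (\ref{S5.15}) forces $|a_+(z)|\leq C e^{\nu|\Im z|/(N-1)}$ in $\C^-$, which is bounded on any strip; symmetrically for $a_-$.

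For the direction ``$\Leftarrow$'', assume $\inf_S(|a_+|+|a_-|)\geq c>0$ and fix a small $\eta\in(0,c/2)$ to be chosen. If $|a_+(z)|\geq\eta$, then $|\phi_{2+}(z)|=|a_+(z)|^N\geq\eta^N$. Otherwise $|a_-(z)|\geq c/2$, and in the sum for $\phi_{1+}$ the term $j=N-1$ has modulus bounded below by a positive constant times $|a_-|^{N-1}\geq(c/2)^{N-1}$, while each other term carries a factor of $a_+$ and is therefore bounded above by a constant times $\eta$ (uniformly on $S$, by the boundedness of $a_\pm$). Choosing $\eta$ small enough, the $j=N-1$ term dominates and $|\phi_{1+}|$ is bounded below. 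The converse ``$\Rightarrow$'' is easier: if $\inf_S(|a_+|+|a_-|)=0$, pick a sequence $z_n\in S$ with $a_+(z_n),a_-(z_n)\to 0$; since every term of $\phi_{1+}$ is a product of a bounded exponential and a monomial in $a_\pm$ of total degree $N-1\geq 1$, and since $\phi_{2+}=-a_+^N$, both $\phi_{1+}(z_n)$ and $\phi_{2+}(z_n)$ tend to zero, contradicting the positive infimum.

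The main technical obstacle is the case analysis in the forward direction, where one must exploit the boundedness of $a_\pm$ on $S$ to ensure the $j=N-1$ term dominates once $|a_+|$ is small enough; all other steps are essentially algebraic manipulations with exponentials on a strip.
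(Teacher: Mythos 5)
Your proof is correct and rests on the same core observation as the paper's: $\phi_{2+}=-a_+^N$ controls $a_+$, and in the sum defining $\phi_{1+}$ the $j=N-1$ term is the only one free of a factor of $a_+$, so it controls $a_-$ once $a_+$ is small. The only stylistic difference is that you prove the implication $\inf_S(|a_+|+|a_-|)>0\Rightarrow\inf_S(|\phi_{1+}|+|\phi_{2+}|)>0$ directly via a quantitative case analysis (small vs.\ not-small $|a_+|$), whereas the paper proves it by contrapositive (a sequence along which $\phi_{1+},\phi_{2+}\to0$ forces $a_+\to0$ and then $a_-\to0$ after isolating $a_-^{N-1}$); you also make explicit the boundedness of $a_\pm$ on the strip, which the paper uses silently but which follows just as you say from (\ref{S5.15}).
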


\begin{proof}
Since the last two conditions in (\ref{S.5.17FA}) are obviously
equivalent, and (\ref{S.5.17C}) is equivalent to (\ref{S.5.17D}) due to
Theorem~\ref{th:CD}, we need to prove only that
$$\inf_S (|\phi_{1+}|+|\phi_{2+}|) > 0 \Longleftrightarrow
\inf_S (|a_{+}|+|a_{-}|) > 0 .$$

Suppose first that
$$\inf_{\xi\in S}(\,|a_+(\xi)|+|a_-(\xi)|\,)=0.$$

Then there is a sequence $\{\xi_n\}_{n\in \N}$ with $\xi_n\in S$ such
that $a_+(\xi_n)\rightarrow 0$ \; and \;$a_-(\xi_n)\rightarrow 0$.
Taking into account the expressions for $\phi_{1+}, \phi_{2+}$ given
by (\ref{S5.16}), we must have $\phi_{1+}(\xi_n)\rightarrow 0$ \; and
\;$\phi_{2+}(\xi_n)\rightarrow 0$. Therefore,
$$\inf_{\xi\in S}(\,|\phi_{1+}(\xi)|+|\phi_{2+}(\xi)|\,)=0.$$

Conversely, if  $$\inf_{\xi\in S}(\,|\phi_{1+}(\xi)|+|\phi_{2+}(\xi)|\,)=0,$$
then for some sequence $\{\xi_n\}$ with $\xi_n\in S$ for all $n\in \N$,
we have $ \phi_{1+}(\xi_n) \rightarrow  0 \hspace{0.2cm} \text{and}
\hspace{0.2cm}\phi_{2+}(\xi_n) \rightarrow  0\,.$ Thus, from the
expression for $\phi_{2+}$ given by (\ref{S5.16}), it follows that $
a_{+}(\xi_n) \rightarrow 0$. From the expression for $\phi_{1+}$ in
(\ref{S5.16}), we then conclude
$$a_-^{N-1}=(-1)^{N-1}e_{\nu-\frac{\lambda}{N}}\phi_{1+}+(-1)^{N}e_{\frac{N-1}{N}\lambda}
\sum_{j=0}^{N-2}\left((-1)^j a_+^{N-1-j}a_-^j\,
e_{-j\frac{\lambda}{N}}\right).$$ Since $ \phi_{1+}(\xi_n) \rightarrow
0\hspace{0.2cm} \text{and} \hspace{0.2cm} a_{+}(\xi_n) \rightarrow
0$, then also $a_{-}(\xi_n) \rightarrow 0\,$ and therefore
$$\inf_{\xi\in S}(\,|a_+(\xi)|+|a_-(\xi)|\,)=0.$$
\end{proof}

We can now state the following.
 \begin{thm}\label{th:Inv} Let $G\in {{\mathfrak{S}}}_{{\lambda,N}}$ for some
$N\in\N$, and let $\phi_\pm$ be the solutions to {\em (\ref{RH})}
 given by {\em (\ref{S5.16}), (\ref{S5.17})}. Then:
 \begin{description}
    \item[(i)] For $N=1$, $\phi_\pm\in CP^\pm$ if and only if
\begin{equation}\label{S.5.17G}
\inf_{\C^++i\varepsilon_1}|a_+|>0\,,\;\;\inf_{\C^--i\varepsilon_2}|a_-|>0 \text{ for some } \varepsilon_1,\varepsilon_2>0.
\end{equation}
    \item[(ii)] For $N>1$, $\phi_\pm\in CP^\pm$ if and only if, with
        $b_+, b_-$ defined by {\em (\ref{S5.15})},
    \begin{equation}\label{S.5.17H}
\inf_{\C^++i\varepsilon_1}(|b_+|+|a_+|)>0\,,\;\inf_{\C^--i\varepsilon_2}(|b_-|+|a_-|)>0 \text{ for some }  \varepsilon_1,\varepsilon_2>0
\end{equation}
and, for any $S$ of the form {\em (\ref{S.5.17strip})},
  \begin{equation}\label{S.5.17I}
\inf_{S}(|a_+|+|a_-|)>0.
\end{equation}
 \end{description}
\end{thm}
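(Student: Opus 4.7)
My plan is to reduce the conditions $\phi_\pm\in CP^\pm$ to a pair of ``infinity'' infima in shifted half-planes plus a ``strip'' infimum, exploiting Theorem~\ref{th:CD} in case (ii); in case (i) the argument will be elementary and self-contained, as the only nontrivial component of $\phi_\pm$ beyond $a_\pm$ is a pure exponential. For part (ii), Remark~\ref{rmk:S5.1} and (\ref{S5.15}) guarantee that $a_\pm$ are entire and that $G, G^{-1}$ are analytic and bounded on any horizontal strip, validating the hypotheses of Theorem~\ref{th:CD}.

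For part (i), the formulas (\ref{S5.16}), (\ref{S5.17}) collapse to $\phi_+ = (e_\beta, -a_+)$ and $\phi_- = (e_{-\nu}, a_-)$. I would split $\C^+$ into the slab $0 < \Im z \leq \varepsilon_1$, where $|e_\beta(z)| \geq e^{-\beta\varepsilon_1}$ is automatically bounded below, and the half-plane $\Im z > \varepsilon_1$, where for $\varepsilon_1$ sufficiently large $|e_\beta|$ is uniformly arbitrarily small, so that $\inf(|e_\beta|+|a_+|)>0$ is equivalent to $\inf|a_+|>0$. Combining, this gives $\phi_+\in CP^+ \iff \inf_{\C^++i\varepsilon_1}|a_+|>0$ for some $\varepsilon_1>0$; the argument for $\phi_-$ is symmetric and produces the second half of (\ref{S.5.17G}).

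For part (ii), Lemma~\ref{th:TCD} immediately matches the strip condition of Theorem~\ref{th:CD} with (\ref{S.5.17I}). For the infinity condition I would exploit the closed form (\ref{4.9}), noting that the exponent $(N-1-j)(\beta/(N-1)+\lambda/N)$ in each summand is nonnegative and vanishes precisely at $j=N-1$. Since $a_+, b_+\in H_\infty^+$ are bounded, on $\C^+ + i\varepsilon_1$ with $\varepsilon_1$ sufficiently large every summand with $j<N-1$ is uniformly smaller than any prescribed threshold, so $\phi_{1+}$ agrees with $(-1)^{N-1}b_+^{N-1}$ up to an arbitrarily small remainder. Together with $\phi_{2+} = -a_+^N$, the infinity condition on $\phi_+$ is equivalent to $\inf(|b_+|^{N-1}+|a_+|^N)>0$ on $\C^+ + i\varepsilon_1$, and the elementary fact that on bounded subsets of $[0,\infty)^2$ one has $\inf(x^{N-1}+y^N)>0 \iff \inf(x+y)>0$ yields the first inequality of (\ref{S.5.17H}). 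Applying the same analysis to (\ref{Y2}) delivers the condition on $a_-, b_-$.

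The main technical step is this asymptotic identification of $\phi_{1\pm}$ with its dominant $b_\pm^{N-1}$ term: one has to choose $\varepsilon_1, \varepsilon_2$ large enough for each subdominant exponential to be uniformly controlled below half of the putative infimum, and then carry out the equivalence between the mixed-degree infimum $\inf(|b_\pm|^{N-1}+|a_\pm|^N)$ and $\inf(|b_\pm|+|a_\pm|)$. Both are routine once the bookkeeping of the exponents $(N-1-j)(\beta/(N-1)+\lambda/N)$ and $j(\nu/(N-1)+\lambda/N)$ has been made explicit.
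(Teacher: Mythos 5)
Your proof follows the paper's route faithfully: part (i) is read off directly from the explicit formulas for $\phi_\pm$, and part (ii) reduces the infinity condition to (\ref{S.5.17H}) via the asymptotic identification of $\phi_{1+}$ with $(-1)^{N-1}b_+^{N-1}$ (resp.\ $\phi_{1-}$ with $b_-^{N-1}$) for large $|\Im z|$, reduces the strip condition to (\ref{S.5.17I}) via Lemma~\ref{th:TCD}, and closes the argument with Theorem~\ref{th:CD}. The one step you flesh out that the paper leaves implicit---the elementary equivalence of $\inf(|b_+|^{N-1}+|a_+|^N)>0$ with $\inf(|b_+|+|a_+|)>0$ after pinning down the exponents $(N-1-j)\bigl(\beta/(N-1)+\lambda/N\bigr)$---is correct and genuinely needed to pass from the asymptotics to (\ref{S.5.17H}).
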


\begin{proof}Part {\bf (i)} follows immediately from the explicit formulas
\begin{equation}\label{S5.5}
\phi_+=(e_{\beta}\,,\, -a_+)\,, \hspace{0.8cm}
\phi_-=(e_{-\nu}\,,\, a_-). \,
\end{equation}.

{\bf (ii)} For $N>1$ we have, from (\ref{S5.16})--(\ref{Y2}),
\begin{eqnarray}
\phi_{1+}  &=& (-1)^{N-1}b_+^{N-1}+\sum_{j=0}^{N-2} \left((-1)^j a_+^{N-1-j}b_+^j
e_{(N-1-j)\frac{\lambda-\nu}{N-1}}\right), \hspace{0.5cm} \phi_{2+}=-a_+^N\,,\label{S.5.17J}\\
\phi_{1-} &=& b_-^{N-1}+\sum_{j=1}^{N-1}\left((-1)^ja_-^jb_-^{N-1-j}e_{-j\frac{\lambda-\beta}{N-1}}\right),
\hspace{0.5cm} \phi_{2-}=(-1)^{N-1}a_-^N. \label{S.5.17K}
\end{eqnarray}
Since $\nu,\beta<\lambda$ when $N>1$, we see that for any
sequence $\{\xi_n\}$ with $\xi_n\in\C^+$ and $\Im (\xi_n)\rightarrow
+\infty$,
\begin{equation}\label{S.5.17L}
    \left|\phi_{1+}-(-1)^{N-1}b_+^{N-1}\right|_{(\xi_n)}\rightarrow 0\,,
\end{equation}
and, for any sequence $\{\xi_n\}$ with $\xi_n\in\C^-$ and $\Im
(\xi_n)\rightarrow -\infty$,
\begin{equation}\label{S.5.17M}
    \left|\phi_{1-}-b_-^{N-1}\right|_{(\xi_n)}\rightarrow 0\,.
\end{equation}

It follows from (\ref{S.5.17J})--(\ref{S.5.17M}) that there exist
$\varepsilon_1,\varepsilon_2>0$ such that (\ref{S.5.17H}) holds if and
only if there exist $\varepsilon_1,\varepsilon_2>0$ such that
(\ref{S.5.17B}) holds. Moreover, by Lemma~\ref{th:TCD}, (\ref{S.5.17I})
is equivalent to (\ref{S.5.17C}), thus the result follows from
Theorem~\ref{th:CD}.
\end{proof}

Note that $\det G\equiv 1$ for all matrix functions of the form
(\ref{4.1}). Therefore, Theorems \ref{th:FA}, \ref{th:2.7A} and \ref{th:Inv}
combined imply the following.

\begin{cor}\label{cor:inv}
Let the assumptions of Theorem~{\em \ref{th:Inv}} hold. Then
condition {\em (\ref{S.5.17G})} (for $N=1$) and {\em (\ref{S.5.17H}),
(\ref{S.5.17I})} (for $N>1$) imply the invertibility of $T_G$. The
converse is also true (and, moreover, $G$ admits a bounded
canonical factorization) provided that $G\in {\mathcal B}^{2\times 2}$.
\end{cor}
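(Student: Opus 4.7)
The strategy is to split the claim into forward and reverse implications and invoke the machinery built up in Sections \ref{RHP}--\ref{applications}. The key observation tying everything together is that every $G$ of the form (\ref{4.1}) satisfies $\det G\equiv 1$, so $\det G$ trivially admits the bounded canonical factorization $1=1\cdot 1^{-1}$.

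For the forward direction, Theorem~\ref{th:Inv} translates conditions (\ref{S.5.17G}) (for $N=1$) or (\ref{S.5.17H}), (\ref{S.5.17I}) (for $N>1$) into the statement $\phi_\pm\in CP^\pm$, where $\phi_\pm$ are the explicit solutions to (\ref{RH}) constructed in Theorem~\ref{th:S5.4}. Theorem~\ref{th:FA} then applies: since $G\in L_\infty^{2\times 2}$ admits a solution of (\ref{RH}) in $CP^\pm$, the Toeplitz operators $T_G$ on $(H_p^+)^2$ and $T_{\det G}=T_1=I$ on $H_p^+$ are Fredholm only simultaneously with matching defect numbers. As the identity has both defect numbers equal to zero, $T_G$ is Fredholm of index zero with trivial kernel and cokernel, hence invertible.

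For the reverse direction, assume $G\in{\mathcal B}^{2\times 2}$ and $T_G$ is invertible on $(H_p^+)^2$. By the defining property of the algebra ${\mathcal B}$, $G$ admits a bounded canonical factorization; this is the ``moreover'' clause of the statement. Theorem~\ref{th:F} then forces every non-trivial solution of (\ref{RH}) in $(H_\infty^\pm)^2$ to lie in $CP^\pm$. Applying this to the particular pair $\phi_\pm$ from Theorem~\ref{th:S5.4} gives $\phi_\pm\in CP^\pm$, and the second (equivalence) half of Theorem~\ref{th:Inv} then delivers (\ref{S.5.17G}) or (\ref{S.5.17H}), (\ref{S.5.17I}) as required.

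The only delicate point is verifying that the explicit pair of Theorem~\ref{th:S5.4} is in fact non-zero, so that Theorem~\ref{th:F} applies non-vacuously. For $N=1$, this is immediate because the first component $\phi_{1+}=e_\beta$ never vanishes. For $N>1$, Remark~\ref{rmk:S5.1} guarantees $\beta,\nu>0$ and that $a_\pm$ are entire; the identities $\phi_{2+}=-a_+^N$, $\phi_{2-}=(-1)^{N-1}a_-^N$ together with the leading exponential term of $\phi_{1+}$ given in (\ref{S.5.17J}) show $\phi_\pm\equiv 0$ forces $g\equiv 0$, which is excluded by the standing hypothesis $\nu+\beta>0$ in the representation (\ref{S5.2}) with a genuine off-diagonal entry. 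Once this bookkeeping is dispatched, the corollary follows without further computation from Theorems~\ref{th:FA}, \ref{th:2.7A}, \ref{th:Inv}, and \ref{th:F}.
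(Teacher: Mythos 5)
Your proof is correct and follows essentially the same route the paper indicates: $\det G\equiv 1$, so Theorems~\ref{th:Inv} and \ref{th:FA} give the forward direction, and Theorem~\ref{th:F} (equivalently, the necessity half of Theorem~\ref{th:2.7A}) plus Theorem~\ref{th:Inv} give the converse. The only slight imprecision is in your non-triviality check for $N>1$: the hypothesis $\nu+\beta>0$ in {\rm(\ref{S5.2})} does not by itself exclude $g\equiv 0$ (one could take $a_\pm\equiv 0$); what actually excludes it is that $g\equiv 0$ admits a representation with $\nu+\beta=\lambda$ and hence $N=1$ under the paper's minimality convention in {\rm(\ref{Ng})}, contradicting $N>1$. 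Since the conclusion (non-triviality of the solution from Theorem~\ref{th:S5.4}) holds either way, the argument goes through.
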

For $N=1$, this result was proved (assuming $\lambda=1$, which
amounts to a simple change of variable) in \cite{CaSantos00},
Theorem~4.1 and Corollary~4.5.

For the particular case when $a_-$  (or $a_+$) is just a single
exponential function, condition (\ref{S.5.17I}) is always satisfied and
we can go deeper in the study of the properties of $T_G$. Before
proceeding in this direction, however, it is useful to establish a more
explicit characterization of the classes $S_{\lambda,N}$ under the
circumstances. Without loss of generality, let us concentrate on the
case when $a_-$ is an exponential.

\begin{lem}\label{l:se}Given $\lambda>0$, let \eq{C1}g=e_{-\sigma}+g_+,\en
where $g_+\in H_\infty^+$ is not identically zero, and
$0<\sigma<\lambda$. Then $g\in S_{\lambda,N}$ for some $N\in\N$
if and only if \eq{C2} e_{-\nu}g_+\in H_\infty^+,\quad
e_{-\frac{N}{N-1}\nu}g_+\in H_\infty^- \en for some \eq{C3}\nu\in
\left[\frac{\lambda}{N}-\sigma,
\frac{\lambda}{N}-\frac{N-1}{N}\sigma\right]\en (of course, the
second condition in {\em (\ref{C2})} applies only for $N>1$).
\end{lem}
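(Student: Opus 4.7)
My plan is to translate between the single-parameter hypothesis on $\nu$ in \ref{C2}--\ref{C3} and the two-parameter decomposition $g = a_- e_{-\beta} + a_+ e_\nu$ defining membership in $S_{\lambda,N}$, using the rigidity that $H_\infty^+ \cap H_\infty^- = \C$.

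For \emph{sufficiency}, given a $\nu$ in the stated interval for which \ref{C2} holds, I set $\beta = \lambda/N - \nu$, $a_- = e_{\beta - \sigma}$, and $a_+ = e_{-\nu} g_+$. A direct computation confirms $a_- e_{-\beta} + a_+ e_\nu = e_{-\sigma} + g_+ = g$ and $\beta + \nu = \lambda/N$, so \ref{S5.2} and \ref{S5.4} hold. The range \ref{C3} forces $\beta \in [(N-1)\sigma/N,\, \sigma]$, giving $a_- \in H_\infty^-$, and the first half of \ref{C2} gives $a_+ \in H_\infty^+$. For $N>1$ the two conditions of \ref{S5.15} reduce respectively to $e_{N\beta/(N-1) - \sigma} \in H_\infty^+$, which follows from $\beta \ge (N-1)\sigma/N$, and $e_{-N\nu/(N-1)} g_+ \in H_\infty^-$, the second half of \ref{C2}.

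For \emph{necessity}, take any representation $g = a_- e_{-\beta} + a_+ e_\nu$ satisfying \ref{S5.2}, \ref{S5.4}, and \ref{S5.15}. Rewriting as
\[
a_- e_{-\beta} - e_{-\sigma} = g_+ - a_+ e_\nu,
\]
the left-hand side lies in $H_\infty^-$ and the right-hand side in $H_\infty^+$, so both equal a common constant $c$. Writing $a_- = e_{\beta - \sigma} + c\, e_\beta$, membership of $a_-$ in $H_\infty^-$ forces $c = 0$ as soon as $\beta > 0$, and for $N > 1$ the option $\beta = 0$ is ruled out by the first part of \ref{S5.15}, which would otherwise demand $e_{-\sigma} + c \in H_\infty^+$ --- impossible since $e_{-\sigma} \notin H_\infty^+$. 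Hence $c = 0$, $a_- = e_{\beta - \sigma}$, and $a_+ = e_{-\nu} g_+$. The constraint $a_- \in H_\infty^-$ together with the first half of \ref{S5.15} then yields $\beta \in [(N-1)\sigma/N,\, \sigma]$, equivalently $\nu$ in the range \ref{C3}, while the second half of \ref{S5.15} is precisely the second condition of \ref{C2}.

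The main obstacle is the borderline case $N = 1$, in which \ref{S5.15} is vacuous and $\beta = 0$ is not automatically excluded; here one must check separately that the corresponding representation, which admits a possibly nonzero constant $c$, either still conforms to \ref{C2} at $\nu = \lambda$ (forcing $c = 0$) or can be reparametrized into the case $\beta > 0$. The remaining computations are routine applications of $e_\alpha e_\gamma = e_{\alpha + \gamma}$ and the equivalence $e_\alpha \in H_\infty^\pm \iff \pm\alpha \ge 0$.
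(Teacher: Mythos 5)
Your argument follows the same skeleton as the paper's, but you are more scrupulous in the necessity direction: the paper simply reads $a_-=e_{\beta-\sigma}$ and $a_+=e_{-\nu}g_+$ off a comparison of (\ref{S5.2}) with (\ref{C1}), while your derivation of the common constant $c$ from $H_\infty^+\cap H_\infty^-=\C$, and its elimination when $\beta>0$ or $N>1$, is exactly the justification the paper leaves implicit. Sufficiency is the same construction as in the paper.

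The case you flag at the end, $N=1$ with $\beta=0$ and $c\ne 0$, is however more than a loose end, and neither of the repairs you sketch can succeed there, because the lemma as literally stated fails in that case. Take $g_+=1+e_\lambda$, so that $g=e_{-\sigma}+1+e_\lambda$. Then $g=(e_{-\sigma}+1)\,e_0+1\cdot e_\lambda$ is a valid representation (\ref{S5.2}) with $\nu+\beta=\lambda$, hence $g\in S_{\lambda,1}$; yet $e_{-\nu}g_+=e_{-\nu}+e_{\lambda-\nu}$ is unbounded on $\C^+$ for every $\nu\in[\lambda-\sigma,\lambda]$, since $\nu\ge\lambda-\sigma>0$. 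So $c$ cannot be forced to vanish and the interval (\ref{C3}) cannot be enlarged. The reading that is consistent with the paragraph preceding the lemma (``let us concentrate on the case when $a_-$ is an exponential'') and with its subsequent use in Theorems~\ref{th:CB}, \ref{th:CD1} and in Section~\ref{GAP} (where one always takes $a_-e_{-\beta}=e_{-\sigma}$, and where in the $AP$ setting $0\notin\Omega(g)$ makes the splitting $g=g_-+g_+$ unique) is that one seeks a representation (\ref{S5.2}) compatible with the prescribed $g_-=e_{-\sigma}$. Under that reading the $\beta=0$, $c\ne0$ branch does not occur, and your argument, as written, is already complete.
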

Note that conditions (\ref{C2}), (\ref{C3}) imply \[
e_{-\frac{\lambda}{N}+\sigma}g_+\in H_\infty^+,\quad
e_{-\frac{\lambda}{N-1}+\sigma}g_+\in H_\infty^-,\] and therefore may
hold for at most one value of $N$.
\begin{proof}{\sl Necessity.} Suppose $g\in S_{\lambda,N}$. Comparing (\ref{S5.2}) and (\ref{C1}) we see that
\eq{C4} a_-=e_{\beta-\sigma}\in H_\infty^-,\quad  a_+=e_{-\nu}g_+\in
H_\infty^+. \en On the other hand,  (\ref{S5.15}) takes the form \eq{C5}
e_{\frac{N}{N-1}\beta-\sigma}\in H_\infty^+,\quad
e_{-\frac{N}{N-1}\nu}g_+\in H_\infty^-.\en The first containments in
(\ref{C4}), (\ref{C5}) are equivalent to \[
\frac{N-1}{N}\sigma\leq\beta\leq\sigma,
\] which along with (\ref{S5.4}) yields that
$\nu=\frac{\lambda}{N}-\beta$ satisfies (\ref{C3}). The second
containments in (\ref{C4}), (\ref{C5}) then imply (\ref{C2}).

{\sl Sufficiency.} Given (\ref{C2}), (\ref{C3}), let $
\beta=\frac{\lambda}{N}-\nu$, and define $a_\pm$ by (\ref{C4}). Then
(\ref{S5.2}), (\ref{S5.4}) and (\ref{S5.15}) hold (the latter for $N>1$).
\end{proof}

\begin{thm}\label{th:CB}Let $G$ be given by {\em (\ref{4.1})} with $g$ of the form
\eq{C18} g=e_{-\sigma}+e_\mu a_+,\quad \mu,\sigma>0, \ a_+\in
H_\infty^+,\en where $\mu+\sigma\geq\lambda$. Then the Toeplitz
operator $T_G$ is invertible if (and only if, provided that $G\in
{\mathcal B}^{2\times 2}$) \eq{C19} \mu+\sigma=\lambda\text{ and }
\inf_{\C^++i\varepsilon}\abs{a_+}>0\text{ for some } \varepsilon
>0,\en and $T_G$ is not semi-Fredholm if $\mu+\sigma>\lambda$.\end{thm}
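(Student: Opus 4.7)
The plan is to split on $\mu+\sigma=\lambda$ versus $\mu+\sigma>\lambda$. In the first case, Corollary~\ref{cor:inv} applied to a natural representation of $g$ yields the invertibility criterion; in the second, I will exhibit infinite-dimensional subspaces of both $\ker T_G$ and $\ker T_G^*$, which together rule out semi-Fredholmness.

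\emph{Case $\mu+\sigma=\lambda$.} Writing $g=1\cdot e_{-\sigma}+a_+\cdot e_\mu$ places $G$ in $\mathfrak{S}_{\lambda,1}$ via $a_-:=1$, $\beta:=\sigma$, $\nu:=\mu$, $N=1$ (so that $\lambda/(\nu+\beta)=1$). The $N=1$ branch of Corollary~\ref{cor:inv} then says $T_G$ is invertible (unconditionally as the sufficient part, and conversely under $G\in\mathcal B^{2\times 2}$) iff both $\inf_{\C^-\!-i\varepsilon_2}|a_-|>0$ (automatic since $|a_-|\equiv 1$) and $\inf_{\C^+\!+i\varepsilon_1}|a_+|>0$, which is exactly condition (\ref{C19}).

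\emph{Case $\mu+\sigma>\lambda$.} Here the spectral gap $(\lambda-\sigma,\mu)$ inside $[0,\lambda]$, which is non-empty precisely when $\mu+\sigma>\lambda$, decouples the Paley-Wiener cancellation conditions. Take any $f\in H_p^+$ with Fourier spectrum in the interior of $I:=[\max\{\lambda-\mu,0\},\min\{\sigma,\lambda\}]$, and set $\phi_{2+}:=-e_{\mu-\lambda}a_+ f$. Routine Fourier-support bookkeeping then gives: (i) the spectrum of $a_+f$ lies in $[\max\{\lambda-\mu,0\},\infty)$, so that of $e_{\mu-\lambda}a_+f$ sits in $[0,\infty)$, forcing $\phi_{2+}\in H_p^+$; and (ii) after cancellation $gf+e_\lambda\phi_{2+}=e_{-\sigma}f$, whose spectrum is strictly negative, putting it in $H_p^-$. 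Hence $(f,\phi_{2+})\in\ker T_G$, and the admissible $f$ range over an infinite-dimensional Paley-Wiener subspace (the length of $I$ is positive in each of the four subcases of $\mu\lessgtr\lambda$ and $\sigma\lessgtr\lambda$, thanks to $\mu,\sigma>0$ and $\mu+\sigma>\lambda$). A wholly symmetric argument, applied to $T_G^*=T_{G^*}$ on $(H_q^+)^2$ with $G^*=\begin{pmatrix} e_\lambda & \bar g\\ 0 & e_{-\lambda}\end{pmatrix}$, $\bar g=e_\sigma+e_{-\mu}\overline{a_+}$, using $\psi_1:=-e_{\sigma-\lambda}\psi_2$ with $\widehat{\psi_2}$ in the mirror interval $\tilde I:=[\max\{\lambda-\sigma,0\},\min\{\mu,\lambda\}]$, shows $\ker T_G^*$ is also infinite-dimensional. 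Therefore $T_G$ is not semi-Fredholm.

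The main technical point is verifying $\phi_{2+}\in H_p^+$ (respectively $\psi_1\in H_q^+$) when $\mu<\lambda$ (resp.\ $\sigma<\lambda$); then $e_{\mu-\lambda}$ (resp.\ $e_{\sigma-\lambda}$) is not itself in $H_\infty^+$, and the analyticity must be recovered from the spectral gap built into the choice of $I$ (resp.\ $\tilde I$), which is precisely where the inequality $\mu+\sigma>\lambda$ is used.
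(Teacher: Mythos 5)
Your Case $\mu+\sigma=\lambda$ matches the paper's treatment exactly: both reduce to the $N=1$ part of Corollary~\ref{cor:inv} (hence Theorems~\ref{th:FA} and \ref{th:2.7A}) via the representation $g=1\cdot e_{-\sigma}+a_+\cdot e_\mu$ with $a_-=1$, $\beta=\sigma$, $\nu=\mu$. The two parameters automatically lie in $(0,\lambda)$ here, and the $\inf|a_-|$ condition is trivially satisfied, so (\ref{S.5.17G}) collapses to (\ref{C19}). No issues.

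For Case $\mu+\sigma>\lambda$, your route is genuinely different from the paper's and is, I think, both correct and somewhat cleaner. The paper takes the particular bounded solution $\phi_+=(e_\sigma,-e_{\mu+\sigma-\lambda}a_+)$, $\phi_-=(e_{\sigma-\lambda},1)$, multiplies it by $\frac{1-e_{-\gamma}(z)}{z}$ for a continuum of $\gamma\in\left(0,\min\{\sigma,\mu+\sigma-\lambda\}\right)$ following the proof of Theorem~5.3 in \cite{CDR} to get an infinite-dimensional kernel, and then invokes the algebraic identity (\ref{GT}), $G^{-T}=JGJ^{-1}$, together with a result from \cite[Section~3.1]{LS} to transfer the infinite-dimensionality to the cokernel. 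You instead pick the first component $\phi_{1+}=f$ to be any Paley--Wiener function with Fourier support in the interior of $I=\left[\max\{\lambda-\mu,0\},\min\{\sigma,\lambda\}\right]$, set $\phi_{2+}=-e_{\mu-\lambda}a_+f$, and verify by pure spectral bookkeeping (using $\mu+\sigma>\lambda$ to make $I$ nondegenerate, $\mu,\sigma>0$ to keep it inside $[0,\lambda]$) that $(\phi_+,G\phi_+)$ solves (\ref{RH}) with $G\phi_+=(e_{-\lambda}f,\,e_{-\sigma}f)$; you then repeat the construction verbatim for $T_G^*=T_{G^*}$, $G^*=\left[\begin{smallmatrix}e_\lambda & \bar g\\ 0 & e_{-\lambda}\end{smallmatrix}\right]$, with the mirror interval $\tilde I$. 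This avoids having to cite the construction of \cite[Theorem~5.3]{CDR}, dispenses with the symmetry (\ref{GT}) and the LS cokernel fact, and handles all parameter regimes $\mu\lessgtr\lambda$, $\sigma\lessgtr\lambda$ uniformly, whereas the paper's explicit $\phi_\pm$ formulas implicitly presuppose $\sigma\leq\lambda$. The only small technicality you gloss over is the meaning of ``Fourier spectrum'' in $H_p^+$ for $p\neq 2$; taking $\hat f$ smooth and compactly supported in the interior of $I$ makes this uniformly harmless and still yields an infinite-dimensional family.
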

\begin{proof}Condition (\ref{C18}) implies that $g\in S_{\lambda,1}$ with
$\beta=\sigma$, $\nu=\lambda-\sigma$, and a solution to (\ref{RH})
is given by \[ \phi_+=(e_\sigma, -e_{\mu+\sigma-\lambda}a_+), \quad
\phi_-=(e_{\sigma-\lambda}, 1).\]  Clearly, $\phi_-\in CP^-$, while
$\phi_+\in CP^+$ if and only if (\ref{C19}) holds. The part of the
statement pertinent to the case $\lambda=\sigma+\mu$ now follows
from Theorems~\ref{th:FA},~\ref{th:2.7A}.

For $\mu+\sigma>\lambda$, following the proof of \cite[Theorem
5.3]{CDR} observe that $\frac{1-e_{-\gamma}(z)}{z}\phi_\pm(z)$
deliver a solution to (\ref{RH}) in $L_p$, for any $\gamma$ between
$0$ and $\min\{\sigma,\mu+\sigma-\lambda\}$. Thus, the operator
$T_G$ has an infinite dimensional kernel in $(H_p^+)^2$ for any $p\in
(1,\infty)$.

Denote by $G^{-T}$ the transposed of $G^{-1}$. A direct computation
shows that for the matrix under consideration, due to its algebraic
structure, \eq{GT} G^{-T}= \left[\begin{matrix}0 & -1\\ 1 &
0\end{matrix}\right] G \left[\begin{matrix}0 & 1\\ -1 &
0\end{matrix}\right]. \en Therefore, the operator $T_{G^{-T}}$ also has
an infinite dimensional kernel. But this means (see, e.g., \cite[Section
3.1]{LS}) that the cokernel of $T_G$ is infinite dimensional. Therefore,
the operator $T_G$ is not even semi-Fredholm on $(H_p^+)^2$,
$1<p<\infty$. \end{proof}

\begin{thm}\label{th:CD1}Let, as in Theorem~{\em \ref{th:CB}}, {\em (\ref{4.1})} and {\em (\ref{C18})}  hold,
but now with \[ \mu\in \left[\frac{\lambda}{N}-\sigma,
\frac{\lambda}{N}-\frac{N-1}{N}\sigma\right] \text{ and }
e_{-\frac{\mu}{N-1}}a_+\in H_\infty^- \] for some integer $N>1$. Then
$T_G$ is invertible if (and only if, for $G\in {\mathcal B}^{2\times 2}$)
for some $\varepsilon>0$ one of the following three conditions holds:
\eq{C22} \sigma+\mu=\frac{\lambda}{N} \text{ and }
\inf_{\C^++i\varepsilon}\abs{a_+}>0, \en or \eq{C23}
\frac{N-1}{N}\sigma+\mu=\frac{\lambda}{N} \text{ and }
\inf_{\C^--i\varepsilon}\abs{e_{-\frac{\mu}{N-1}}a_+}>0, \en or \[
\inf_{\C^++i\varepsilon}\abs{a_+}>0, \text{ and }
\inf_{\C^--i\varepsilon}\abs{e_{-\frac{\mu}{N-1}}a_+}>0. \] If, on the
other hand, \eq{C25} \sigma+\mu>\frac{\lambda}{N} \text{ and }
e_{\delta-\frac{\mu}{N-1}}a_+\in H_\infty^- \en or \eq{C26}
\frac{N-1}{N}\sigma+\mu<\frac{\lambda}{N} \text{ and }
e_{-\delta}a_+\in H_\infty^+ \en for some $\delta>0$, then $T_G$ is not
even semi-Fredholm.
\end{thm}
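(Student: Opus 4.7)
The plan is to recognise that, under the hypotheses, $g$ in (\ref{C18}) belongs to $S_{\lambda,N}$ via Lemma~\ref{l:se}, apply Theorem~\ref{th:S5.4} to produce an explicit homogeneous solution $(\phi_+,\phi_-)$ of (\ref{RH}), and then invoke Corollary~\ref{cor:inv} for the invertibility claim, and the kernel-construction scheme from Theorem~\ref{th:CB} for the non-semi-Fredholm alternative.

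First I would identify the representation parameters. The conditions $a_+\in H_\infty^+$ and $e_{-\mu/(N-1)}a_+\in H_\infty^-$ together force $\nu=\mu$ in Lemma~\ref{l:se}, so $\beta=\lambda/N-\mu$, the coefficient $a_-$ appearing in (\ref{S5.2}) equals the exponential $e_{\beta-\sigma}$ (of nonpositive exponent), and the auxiliary functions from (\ref{S5.15}) become $b_+=e_{(\lambda-N\mu-(N-1)\sigma)/(N-1)}$ (nonnegative exponent) and $b_-=e_{-\mu/(N-1)}a_+\in H_\infty^-$. Theorem~\ref{th:S5.4} then yields a concrete solution $(\phi_+,\phi_-)\in (H_\infty^\pm)^2$. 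To apply Corollary~\ref{cor:inv} I would next observe that the strip condition (\ref{S.5.17I}) is automatic, since $e_{\beta-\sigma}$ is a nowhere-vanishing exponential bounded away from zero on any finite-width strip. Invertibility of $T_G$ therefore reduces to (\ref{S.5.17H}), which I would analyse according to where the parameter $\mu$ lies: on the lower boundary $\sigma+\mu=\lambda/N$ the factor $a_-$ is identically $1$, the $\C^-$ corona condition trivialises, and only $\inf_{\C^++i\varepsilon}|a_+|>0$ remains, giving~(\ref{C22}); on the upper boundary $(N-1)\sigma/N+\mu=\lambda/N$ the factor $b_+$ is identically $1$, the $\C^+$ condition trivialises, and only $\inf_{\C^--i\varepsilon}|b_-|>0$ remains, giving~(\ref{C23}); in the strict interior both $b_+$ and $a_-$ are nontrivial exponentials decaying at infinity in their respective half-planes, so both infimum conditions are required, which is the third alternative. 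This exhausts the three stated possibilities.

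For the non-semi-Fredholm conclusion, I would follow the scheme of Theorem~\ref{th:CB}, producing an infinite-dimensional family inside $\ker T_G$ in $(H_p^+)^2$ by multiplying $(\phi_+,\phi_-)$ by scalar entire functions depending on a real parameter~$\gamma$. Under (\ref{C25}), take $h_\gamma(z)=(1-e^{i\gamma z})/z$, which lies in $H_p^+\setminus H_p^-$ for every $p>1$. Then $h_\gamma\phi_+\in(H_p^+)^2$ automatically because $\phi_+\in(H_\infty^+)^2$, and the key step is $h_\gamma\phi_-\in(H_p^-)^2$. Here $\phi_{2-}$ is a constant multiple of $e_{\lambda-N(\mu+\sigma)}$, decaying in $\C^-$ at rate $N(\mu+\sigma)-\lambda>0$, while each summand of $\phi_{1-}$ can be bounded in $\C^-$ using $e_{\delta-\mu/(N-1)}a_+\in H_\infty^-$, the slowest decay rate being $(N-1)\delta>0$ (attained at the $j=0$ term). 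For $\gamma\in(0,\min\{(N-1)\delta,\,N(\mu+\sigma)-\lambda\})$---a nonempty interval by the hypotheses of (\ref{C25})---$h_\gamma\phi_-$ decays exponentially in $\C^-$ and hence belongs to $(H_p^-)^2$. Case~(\ref{C26}) is symmetric: use $h_\gamma(z)=(1-e^{-i\gamma z})/z\in H_p^-$ as in the proof of Theorem~\ref{th:CB}, and derive the required decay of $\phi_+$ in $\C^+$ from $e_{-\delta}a_+\in H_\infty^+$ combined with the strict inequality $\lambda-N\mu-(N-1)\sigma>0$. Varying $\gamma$ produces linearly independent kernel elements, so $\ker T_G$ is infinite-dimensional. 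Since $\det G\equiv 1$, identity~(\ref{GT}) remains in force and yields $T_{G^{-T}}=M_J T_G M_{J}^{-1}$ for a constant invertible $J$; hence the cokernel of $T_G$ is infinite-dimensional as well and $T_G$ fails to be semi-Fredholm.

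The principal technical difficulty, I expect, lies in the bookkeeping for the non-Fredholm argument---estimating each summand $a_+^{N-1-j}e_{-(j+1)\mu-j\sigma}$ of $\phi_{1-}$, and the analogous terms in $\phi_{1+}$, precisely enough to certify the uniform-in-height $L_p$-integrability of $h_\gamma\phi_\pm$ on horizontal lines. The extra exponent $\delta>0$ in (\ref{C25}) or (\ref{C26}) is exactly what enlarges the admissible range of $\gamma$ from a single point to a nondegenerate interval, thereby producing the infinite-dimensional kernel.
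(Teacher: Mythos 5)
Your proposal is correct and follows essentially the same route as the paper: identify $g\in S_{\lambda,N}$ via Lemma~\ref{l:se} (with $\nu=\mu$, $a_-=e_{\lambda/N-\mu-\sigma}$), read off the explicit homogeneous solution from Theorem~\ref{th:S5.4}, reduce the corona check to the behavior at infinity (the strip condition being automatic since $a_-$ is a single exponential), combine with Theorems~\ref{th:FA},~\ref{th:2.7A} for invertibility, and build the infinite-dimensional kernel/cokernel via the multiplier $\frac{1-e_{\pm\gamma}}{z}$ and identity~(\ref{GT}). One minor inaccuracy: the slowest decay rate of the summands of $\phi_{1-}$ is $(N-1)\delta$ only when $\delta\le\frac{N}{N-1}\mu+\sigma$ (otherwise the $j=N-1$ term, with rate $N\mu+(N-1)\sigma$, dominates), but since every rate involved is strictly positive the admissible range of $\gamma$ is still a nondegenerate interval and the conclusion is unaffected.
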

\begin{proof}According to Lemma~\ref{l:se},  $G\in{\frak S}_{\lambda,N}$. Moreover,
one can choose in (\ref{S5.2}) $\nu=\mu$,
$\beta=\frac{\lambda}{N}-\mu$  and
$a_-=e_{\frac{\lambda}{N}-\mu-\sigma}$. Then formulas (\ref{S5.16}),
(\ref{S5.17}) yield the following solution to (\ref{RH}):
\begin{align*} \phi_{1+}= &
e_{\lambda-N(\mu+\sigma)+\sigma}\sum_{j=0}^{N-1}\left((-1)^ja_+^{N-1-j}e_{(N-1-j)(\mu+\sigma)}\right),\\
\phi_{2+}= & -a_+^N,\\
\phi_{1-}= &
\sum_{j=0}^{N-1}\left((-1)^j(e_{-\frac{\mu}{N-1}}a_+)^{N-1-j}e_{-j(\frac{N}{N-1}\mu+\sigma)}\right),\\
\phi_{2-}= & (-1)^{N-1}e_{\lambda-N(\sigma+\mu)}.
\end{align*} Clearly, $(\phi_{1-},\phi_{2-})\in CP^-$ if and only if the first condition in (\ref{C22}) or the second condition
in (\ref{C23}) holds. Similarly, $(\phi_{1+},\phi_{2+})\in CP^+$ is
equivalent to the first condition in (\ref{C23}) or the second condition
in (\ref{C22}). Since the first conditions in (\ref{C22}), (\ref{C23})
cannot hold simultaneously, the statement regarding the invertibility
of $T_G$ now follows from Theorems~\ref{th:FA},~\ref{th:2.7A}.

If (\ref{C25}) or (\ref{C26}) holds, then
$\phi_-=e_{-\tilde{\delta}}\tilde{\phi}_-$ or
$\phi_+=e_{\tilde{\delta}}\tilde{\phi}_+$ with $\tilde{\delta}>0$,
$\tilde{\phi}_\pm\in(H_\infty^\pm)^2$, respectively. It follows that the
kernel of  $T_G$ is infinite dimensional, as in the proof of
Theorem~\ref{th:CB}.   Using (\ref{GT}), we in the same manner derive
that the cokernel of $T_G$ also is infinite dimensional. So, $T_G$ is not
semi-Fredholm.
\end{proof}

\section{AP matrix functions with a spectral gap around zero}\label{GAP}

The results of the previous section take a particular and, in some
sense, more explicit form when considered in the almost periodic
setting. The first natural question is, which functions $g\in AP$ belong
to $S_{{\lambda,N}}$ for some $N\in\N$, with $a_\pm\in AP^\pm$ in
(\ref{S5.2}).

According to Remark~\ref{rmk:S5.1}, we may have $0\in\Omega(g)$
only if $N=1$ and, in addition, $g=a_-+a_+e_\lambda$ with
$0\in\Omega(a_-)$ or $g=a_-e_{-\lambda}+a_+$ with
$0\in\Omega(a_+)$. In either case the operator $T_G$ is invertible, as
can be deduced from the so called one sided case, see \cite[Section
14.1]{BKS1}. The easiest way to see that directly, however, is by
observing that problem (\ref{RH}) has a solution on $CP^\pm$:
$\phi_+=(1,-a_+)$, $\phi_-=(e_{-\lambda}, a_-)$ in the first case,
$\phi_+=(e_\lambda,-a_+)$, $\phi_-=(1, a_-)$ in the second.

Therefore, in what follows we restrict ourselves to the case
$0\notin\Omega(g)$. Then
\begin{equation}\label{S.5.17A}
g=g_-+g_+\text{ with } g_\pm\in AP^\pm,\ 0\notin\Omega(g_\pm)
\end{equation} with $g_\pm$ uniquely defined by $g$. Comparing with (\ref{S5.2}), we have
\eq{5.1A} g_+=a_+e_\nu,\quad g_-=a_-e_{-\beta}.\en

Let
\begin{eqnarray}
\eta_{1-}=-\sup\Omega(g_-) & , & \eta_{2-}=-\inf\Omega(g_-), \label{S5.18}\\
 \eta_{1+}=\inf \Omega(g_+) & , & \eta_{2+}=\sup \Omega(g_+).\label{S5.19}
\end{eqnarray}
Here $\Omega(g_+), -\Omega(g_-)$ are thought of as subsets of
$\R_+$ (possibly empty), so that $\eta_{1\pm},\eta_{2\pm}\in
[0,+\infty]\cup\{-\infty\}$.

\begin{thm}\label{th:S5.5}Let $g$ be given by {\em (\ref{S.5.17A})}. Then \begin{itemize}
\item[(i)] $g\in S_{\lambda,1}$ if and only if
    $\eta_{1+}+\eta_{1-}\geq\lambda$; \item[(ii)] $g\in
    S_{\lambda,N}$ with $N>1$ if and only if \end{itemize} \eq{S5.20}
    N=\left\lceil\frac{\lambda}{\eta_{1-}+\eta_{1+}}\right\rceil ,\en while
    \eq{S5.21} \eta_{1-}\geq \frac{N-1}{N}\eta_{2-}, \  \eta_{1+}\geq
    \frac{N-1}{N}\eta_{2+},\ \eta_{2+}+\eta_{2-}\leq\frac{\lambda}{N-1}.
    \en Under these conditions, any $\nu$ satisfying \eq{S5.25}
    M:=\max\left\{\frac{\lambda}{N}-\eta_{1-},\,
    \frac{N-1}{N}\eta_{2+}\right\}\leq\nu\leq
    \min\left\{\eta_{1+},\frac{\lambda-(N-1)\eta_{2-}}{N}\right\}=:m\en
    and \eq{S5.24} a_+=g_+e_{-\nu},\ \beta=\frac{\lambda}{N}-\nu, \
    a_-=g_-e_\beta \en deliver a representation {\em (\ref{S5.2})}.
\end{thm}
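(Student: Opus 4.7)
The strategy is to convert the conditions defining $S_{\lambda,N}$ into an explicit system of linear inequalities in the two scalar parameters $(\nu,\beta)$ via Bohr--Fourier spectra, and then read off the theorem from the solvability of that system.

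\emph{Step 1 (spectral translation).} Fix $g$ as in (\ref{S.5.17A}). For any representation (\ref{S5.2}) with $a_\pm\in AP^\pm$, the summands $a_+e_\nu$ and $a_-e_{-\beta}$ are AP functions whose spectra lie in $[\nu,\infty)$ and $(-\infty,-\beta]$ respectively. Since $\nu+\beta>0$ and $0\notin\Omega(g)$, these spectra are strictly separated from $0$, so uniqueness of the spectral decomposition of $g$ forces $g_+=a_+e_\nu$ and $g_-=a_-e_{-\beta}$. Consequently $\Omega(a_+)=\Omega(g_+)-\nu$ and $\Omega(a_-)=\Omega(g_-)+\beta$, so the memberships $a_\pm\in AP^\pm$ become $\nu\le\eta_{1+}$ and $\beta\le\eta_{1-}$. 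For $N>1$, the conditions $b_\pm\in H_\infty^\pm$ in (\ref{S5.15}) likewise translate to $\Omega(a_-)+\beta/(N-1)\subset\R_+$ and $\Omega(a_+)-\nu/(N-1)\subset\R_-$, i.e., $\beta\ge\tfrac{N-1}{N}\eta_{2-}$ and $\nu\ge\tfrac{N-1}{N}\eta_{2+}$.

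\emph{Step 2 (solvability).} For part (i), $N=1$ leaves only $\nu+\beta=\lambda$, $\nu\le\eta_{1+}$, $\beta\le\eta_{1-}$, which is solvable iff $\eta_{1+}+\eta_{1-}\ge\lambda$. For part (ii), $N>1$ together with the normalization $\nu+\beta=\lambda/N$: substituting $\beta=\lambda/N-\nu$ reduces the five inequalities from Step~1 to $\nu\in[M,m]$ with $M,m$ as in (\ref{S5.25}). Since $M$ is a max of two expressions and $m$ a min of two, the condition $M\le m$ unfolds into four pairwise comparisons; three coincide with (\ref{S5.21}), and the fourth is $\eta_{1+}+\eta_{1-}\ge\lambda/N$. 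Conversely, choosing any $\nu\in[M,m]$ and defining $\beta$, $a_\pm$ by (\ref{S5.24}) inverts Step~1 and yields a valid representation (\ref{S5.2}) satisfying (\ref{S5.15}).

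\emph{Step 3 (identification of $N$ and the main obstacle).} The inequality $\eta_{1+}+\eta_{1-}\ge\lambda/N$ gives $N\ge N_0:=\lceil\lambda/(\eta_{1+}+\eta_{1-})\rceil$. Since $(N-1)/N$ is increasing in $N$ and $\lambda/(N-1)$ is decreasing, the three inequalities in (\ref{S5.21}) remain valid after replacing $N$ by any integer $N'\in[2,N]$, and the auxiliary $\eta_{1+}+\eta_{1-}\ge\lambda/N'$ holds at $N'=N_0$ by the very definition of $N_0$. Thus $g$ also admits a representation with parameter $N_0$, so the minimality convention in (\ref{Ng}) forces $N=N_0$, which is (\ref{S5.20}). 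The principal subtleties are (a) justifying that (\ref{S5.2}) matches (\ref{S.5.17A}) summand by summand, which hinges on the strict separation of spectra guaranteed by $0\notin\Omega(g)$; and (b) reconciling ``existence of a representation with parameter $N$'' with minimality in (\ref{Ng}). Boundary cases $g_+=0$ or $g_-=0$ (in which $\eta_1$ equals $+\infty$ and $\eta_2$ equals $-\infty$ on the vanishing side) render the associated inequalities vacuous and are handled uniformly.
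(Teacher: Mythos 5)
Your proof is correct and follows essentially the same line as the paper's: translate the memberships $a_\pm\in H_\infty^\pm$ and the conditions (\ref{S5.15}) into linear inequalities on $(\nu,\beta)$ via Bohr--Fourier spectra, substitute $\beta=\lambda/N-\nu$, and read off solvability as $M\le m$. Your unpacking of $M\le m$ into the three inequalities (\ref{S5.21}) together with the auxiliary $\eta_{1+}+\eta_{1-}\geq\lambda/N$ matches the paper's computation exactly. The one place where you genuinely depart is the identification of $N$ (your Step~3): the paper derives the two-sided bound $\lambda/(\eta_{1+}+\eta_{1-})\le N\le 1+\lambda/(\eta_{2+}+\eta_{2-})$, observes that when some $\eta_{2\pm}>\eta_{1\pm}$ this window has width $<1$ and thus pins $N$ down uniquely, and then disposes of the degenerate case $\eta_{1\pm}=\eta_{2\pm}$ (where $g$ is a binomial) by a separate appeal to the minimality convention. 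You instead note that the three inequalities in (\ref{S5.21}) are monotone in $N$ (since $(N-1)/N$ increases and $\lambda/(N-1)$ decreases), so any admissible $N$ can be lowered to $N_0=\lceil\lambda/(\eta_{1+}+\eta_{1-})\rceil$, and minimality then forces $N=N_0$ in a single stroke. This is a cleaner, more uniform argument that avoids the case split. One small point worth making explicit: you need $N_0\geq 2$ for the inequalities at $N'=N_0$ to make sense; this follows because if $N_0=1$ then $\eta_{1+}+\eta_{1-}\geq\lambda$, so by part~(i) $g\in S_{\lambda,1}$, contradicting the standing hypothesis of part~(ii) that the minimal $N$ exceeds~$1$.
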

\begin{proof} (i) If $g\in S_{\lambda,1}$, then from (\ref{5.1A}) with $\nu+\beta=\lambda$ it
follows that $\eta_{1+}+\eta_{1-}\geq\lambda$. Conversely, setting
$a_\pm=0$ if $g_\pm=0$, $a_+=g_+e_{-\eta_{1+}}$,
$a_-=g_-e_{\lambda-\eta_{1+}}$ if $g_+\neq 0$, and
$a_+=g_+e_{-\lambda+\eta_{1-}}$, $a_-=g_-e_{\eta_{1-}}$ if $g_-\neq
0$, we can write $g$ as in (\ref{S5.2}) with $\nu+\beta=\lambda$, so
that $g\in S_{\lambda,1}$.

(ii) {\sl Necessity.} Formulas for $a_\pm$ in (\ref{S5.24}) follow from
the uniqueness of $g_\pm$ in the representation (\ref{S.5.17A}). The
condition $a_\pm\in H_\infty^\pm$ is therefore equivalent to \eq{c1}
\beta\leq \eta_{1-},\quad \nu\leq \eta_{1+}.\en Conditions
(\ref{S5.15}), in their turn, are equivalent to \eq{c2} \beta\geq
\frac{N-1}{N}\eta_{2-},\quad \nu\geq\frac{N-1}{N}\eta_{2+}.\en
Comparing the respective inequalities in (\ref{c1}) and (\ref{c2})
shows the necessity of the first two conditions in (\ref{S5.21}). To
obtain the third condition there, just add the two inequalities in
(\ref{c2}): \[ \beta+\nu\geq\frac{N-1}{N}(\eta_{2+}+\eta_{2-}),\]  and
compare the result with (\ref{S5.4}).

On the other hand, adding the inequalities in (\ref{c1}) yields, once
again with the use of (\ref{S5.4}), \[ \frac{\lambda}{N}=\beta+\nu\leq
\eta_{1+}+\eta_{1-}. \] So, \eq{in}
\frac{\lambda}{\eta_{1+}+\eta_{1-}}\leq N\leq
1+\frac{\lambda}{\eta_{2+}+\eta_{2-}}.\en If at least one of the
inequalities $\eta_{2\pm}>\eta_{1\pm}$ holds, the difference between
the right- and left-hand sides of the inequalities (\ref{in}) is strictly less
than 1, and therefore an integer $N$ is defined by (\ref{in}) uniquely, in
accordance with (\ref{S5.20}). Otherwise, $\eta_{1\pm}=\eta_{2\pm}$,
which means that $g=c_1e_{\eta_{1-}}+c_2e_{\eta_{1+}}$ with
$c_1,c_2\in\C\setminus\{0\}$. Since by definition $N$ is the smallest
possible number satisfying (\ref{Ng}) with $\nu,\beta$ such that
(\ref{S5.2}) holds, we arrive  again at (\ref{S5.20}).

{\sl Sufficiency.} Let (\ref{S5.21}) hold for $N$ defined by (\ref{S5.20}).
Then $m,M$ defined in (\ref{S5.25}) satisfy $M\leq m$, so that  $\nu$
may indeed be chosen as in (\ref{S5.25}). With such $\nu$, and
$a_\pm$ defined by (\ref{S5.24}), we have (\ref{S5.2}), (\ref{S5.4}), and
(\ref{S5.15}). \end{proof}

The results of Theorem \ref{th:Inv} and Corollary \ref{cor:inv},
combined with Theorem \ref{th:S5.5},  yield the following.

\begin{thm}\label{th:S5.5A}
Let $g\in S_{\lambda,N}$ be written as  {\em (\ref{S.5.17A})}, and let
$\eta_{j\pm}$ $(j=1,2)$ be defined by {\em (\ref{S5.18})--(\ref{S5.19})}.
Then the Toeplitz operator $T_G$ with symbol $G$ given by {\em
(\ref{4.1})} is invertible if (and, for $g\in APW$, only if) one of the
following conditions holds:
\begin{description}
    \item[(i)] $N=1$ and
    \begin{equation}\label{S5.30A}
    \eta_{1+}\in\Omega
    (g_+)\,,\quad -\eta_{1-}\in\Omega (g_-)\,,\quad\eta_{1+}+\eta_{1-}=\lambda\,;
\end{equation}
    \item[(ii)] $N>1$ and
    \begin{equation}\label{S5.30B}
    \eta_{1+}\in\Omega (g_+)\,,\quad -\eta_{1-}\in\Omega
    (g_-)\,,\quad
    \eta_{1+}+\eta_{1-}=\frac{\lambda}{N}\,;
\end{equation}
    \item[(iii)]$N>1$ and
    \begin{equation}\label{S5.30D}
    \eta_{1+},\, \eta_{2+}\in\Omega
    (g_+)\,,\quad\eta_{2+}=\frac{N}{N-1}\,\eta_{1+}\,;
\end{equation}
    \item[(iv)]$N>1$ and
    \begin{equation}\label{S5.30C}
    -\eta_{1-},-\eta_{2-}\in\Omega
    (g_-)\,,\quad
    \eta_{2-}=\frac{N}{N-1}\,\eta_{1-}\,;
\end{equation}
    \item[(v)] $N>1$ and
  \begin{equation}\label{S5.30E}
    \eta_{2+}\in\Omega
    (g_+)\,,\quad-\eta_{2-}\in\Omega
    (g_-)\,,\quad
     \eta_{2+}+\eta_{2-}=\frac{\lambda}{N-1}\,;
\end{equation}
\end{description}
and, whenever $N>1$,
\begin{equation}\label{S5.30J}
\inf_S (|g_+|+|g_-|)>0\text{ for any strip $S$ of the form
{\em (\ref{S.5.17strip})}}.
\end{equation}
\end{thm}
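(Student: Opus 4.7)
The plan is to combine Corollary~\ref{cor:inv} (which is an ``iff'' statement in the $APW$ setting, since $APW$ qualifies as one of the algebras $\mathcal B$ described before Theorem~\ref{th:2.7A}) with the explicit parametrization of $S_{\lambda,N}$ furnished by Theorem~\ref{th:S5.5}. From the latter I would write $a_+=g_+e_{-\nu}$, $a_-=g_-e_\beta$ and, when $N>1$, $b_+=g_-e_{N\beta/(N-1)}$, $b_-=g_+e_{-N\nu/(N-1)}$, with $\nu+\beta=\lambda/N$ and $\nu\in[M,m]$ of (\ref{S5.25}).

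The key auxiliary step is to translate the shifted half-plane infimum conditions (\ref{S.5.17G}), (\ref{S.5.17H}) into Bohr--Fourier spectrum conditions. For $f\in APW$ with $\Omega(f)\subset\R_+$, absolute convergence of the Bohr--Fourier series gives $f(x+iy)\to\widehat f(0)$ uniformly in $x$ as $y\to+\infty$; hence $\inf_{\C^++i\varepsilon}|f|>0$ for some $\varepsilon>0$ if and only if $0\in\Omega(f)$ (the sufficient direction is achieved by taking $\varepsilon$ large enough for $|f(x+iy)|$ to be within $|\widehat f(0)|/2$ of $|\widehat f(0)|$). The symmetric statement holds in the lower half-plane for spectra in $\R_-$, and passing to sums one obtains $\inf_{\C^++i\varepsilon}(|f_1|+|f_2|)>0$ iff $0\in\Omega(f_1)\cup\Omega(f_2)$.

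Applying this translation yields the five cases by direct computation. For $N=1$, (\ref{S.5.17G}) combined with $\nu\le\eta_{1+}$, $\beta\le\eta_{1-}$, and $\nu+\beta=\lambda$ forces $\nu=\eta_{1+}$, $\beta=\eta_{1-}$, $\eta_{1+}\in\Omega(g_+)$, $-\eta_{1-}\in\Omega(g_-)$, which is (\ref{S5.30A}). For $N>1$, the four possibilities $0\in\Omega(a_+)$, $0\in\Omega(b_+)$, $0\in\Omega(a_-)$, $0\in\Omega(b_-)$ correspond respectively to $\nu=\eta_{1+}$, $\beta=\frac{N-1}{N}\eta_{2-}$, $\beta=\eta_{1-}$, $\nu=\frac{N-1}{N}\eta_{2+}$ (each accompanied by the requirement that the relevant extremal exponent lie in $\Omega(g_\pm)$). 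Combining any one of the two upper-half-plane alternatives with any one of the two lower-half-plane alternatives and imposing $\nu+\beta=\lambda/N$ produces exactly conditions (\ref{S5.30B})--(\ref{S5.30E}). Finally, (\ref{S.5.17I}) is equivalent to (\ref{S5.30J}) because $a_\pm$ and $g_\pm$ differ only by the exponential factors $e_{-\nu}$, $e_\beta$, whose moduli are bounded above and below on every horizontal strip.

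The main obstacle is really the bookkeeping in the case analysis for $N>1$: matching the four combinatorial possibilities with the four stated conditions (ii)--(v), verifying in each case that the prescribed $(\nu,\beta)$ actually lies in the admissible interval $[M,m]$ of Theorem~\ref{th:S5.5} (which is where the constraints (\ref{S5.21}) are used), and checking that the resulting extremal exponent conditions on $\Omega(g_\pm)$ are precisely as stated. Once the half-plane-to-spectrum translation of the second step is in place, these verifications reduce to routine algebraic manipulations using $\nu+\beta=\lambda/N$ and the definitions of $\eta_{i\pm}$.
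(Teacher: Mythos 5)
Your proposal is correct and follows essentially the same route as the paper: invoke Theorem~\ref{th:Inv}/Corollary~\ref{cor:inv} (using $APW\subset\mathcal B$ for the converse), write $a_\pm, b_\pm$ in terms of $g_\pm$ via Theorem~\ref{th:S5.5}, and translate the shifted half-plane infimum conditions (\ref{S.5.17G}), (\ref{S.5.17H}) into the spectral condition ``$M(\cdot)\ne 0$, i.e.\ the extreme point of the Bohr--Fourier spectrum is attained,'' which upon matching the four Boolean combinations against $\nu+\beta=\lambda/N$ yields exactly (ii)--(v); the paper's own proof carries out precisely this case analysis, merely writing the half-plane-to-spectrum translation as $M(a_\pm)\ne 0$, $M(b_\pm)\ne 0$ rather than spelling out the uniform-limit argument you give.
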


\begin{proof}
For $N=1$, (\ref{S5.30A}) is equivalent to (\ref{S.5.17G}).

For $N>1$, setting
\begin{equation}\label{x9}
a_-=e_{\frac{\lambda}{N}-\nu}g_- \;\;\text{and}\;\; a_+=e_{-\nu}g_+
\end{equation}
where $\beta=\frac{\lambda}{N}-\nu$, we deduce from (\ref{S5.15})
that
\begin{equation}\label{x10}
b_-=e_{-\frac{N\nu}{N-1}}\,g_+ \;\;\text{and}\;\; b_+=e_{\frac{\lambda-N\nu}{N-1}}\,g_-.
\end{equation}
Hence
\begin{align*}
M(a_+)\ne 0 &\quad\text{if and only if}\quad \eta_{1+}=\nu\in\Omega(g_+),\\
M(b_+)\ne 0 &\quad\text{if and only if}\quad -\eta_{2-}=-\frac{\lambda-N\nu}{N-1}\in\Omega(g_-),\\
M(a_-)\ne 0 &\quad\text{if and only if}\quad -\eta_{1-}=\nu-\frac{\lambda}{N}\in\Omega(g_-),\\
M(b_-)\ne 0 &\quad\text{if and only if}\quad \eta_{2+}=-\frac{N\nu}{N-1}\in\Omega(g_+).
\end{align*}
Thus, the first inequality in (\ref{S.5.17H}) holds if and only if either
$\eta_{1+}=\nu\in\Omega(g_+)$ or
$-\eta_{2-}=-\frac{\lambda-N\nu}{N-1}\in\Omega(g_-)$, and the
second inequality in (\ref{S.5.17H}) holds if and only if either
$-\eta_{1-}=\nu-\frac{\lambda}{N}\in\Omega(g_-)$ or
$\eta_{2+}=-\frac{N\nu}{N-1}\in\Omega(g_+)$.

Taking now $\eta_{1+}=\nu\in\Omega(g_+)$ and
$-\eta_{1-}=\nu-\frac{\lambda}{N}\in\Omega(g_-)$, we get the
equivalence of (\ref{S.5.17H}) and (\ref{S5.30B}); taking
$\eta_{1+}=\nu\in\Omega(g_+)$ and
$\eta_{2+}=-\frac{N\nu}{N-1}\in\Omega(g_+)$, we get the equivalence
of (\ref{S.5.17H}) and (\ref{S5.30D}); taking
$-\eta_{2-}=-\frac{\lambda-N\nu}{N-1}\in\Omega(g_-)$ and
$-\eta_{1-}=\nu-\frac{\lambda}{N}\in\Omega(g_-)$, we get the
equivalence of (\ref{S.5.17H}) and (\ref{S5.30C}); taking
$-\eta_{2-}=-\frac{\lambda-N\nu}{N-1}\in\Omega(g_-)$ and
$\eta_{2+}=-\frac{N\nu}{N-1}\in\Omega(g_+)$, we get the equivalence
of (\ref{S.5.17H}) and (\ref{S5.30E}). Thus, we see that (\ref{S.5.17H}) is
equivalent to one of the conditions (ii)--(v) of the theorem being
satisfied.

The result now follows from Theorem \ref{th:Inv} and Corollary
\ref{cor:inv} and the second equivalence in (\ref{S.5.17FA}).
\end{proof}

From (\ref{S5.25}) it follows that in the case (ii) we have
$\nu=\frac{\lambda}{N}-\eta_{1-}=\eta_{1+}$ and therefore
\[ \lambda\ge\max\{ N\eta_{1-}+(N-1)\eta_{2+},
N\eta_{1+}+(N-1)\eta_{2-}\},\]  in the case (iii) we have
$\nu=\frac{N-1}{N}\,\eta_{2+}=\eta_{1+}$ so that
\[ N\eta_{1+}+(N-1)\,\eta_{2-}\le\lambda\le N\eta_{1-}+(N-1)\eta_{2+},\]
in the case (iv) we have
$\nu=\frac{\lambda}{N}-\eta_{1-}=\frac{\lambda}{N}
-\frac{N-1}{N}\,\eta_{2-}$ and therefore \[ N\eta_{1-}+(N-1)\eta_{2+}\le
\lambda\le N\eta_{1+}+(N-1)\eta_{2-},\]  in the case (v) we have
$\nu=\frac{N-1}{N}\,\eta_{2+}=\frac{\lambda}{N}-\frac{N-1}{N}\,\eta_{2-}$
so that \[ \lambda\le \min\{ N\eta_{1-}+(N-1)\eta_{2+},
N\eta_{1+}+(N-1)\eta_{2-}\}.\]

We also note that if $\lambda=N\eta_{1-}+(N-1)\eta_{2+}$, then
condition (\ref{S5.30D}) is equivalent to
\begin{equation}\label{S5.30H}
\eta_{1+},\, \eta_{2+}\in\Omega
(g_+)\,,\quad\eta_{1+}+\eta_{1-}=\frac{\lambda}{N}\,;
\end{equation}
while condition (\ref{S5.30C}) is equivalent to
\begin{equation}\label{S5.30F}
-\eta_{1-},-\eta_{2-}\in\Omega(g_-)\,,\quad
\eta_{2+}+\eta_{2-}=\frac{\lambda}{N-1}\,.
\end{equation}
If $\lambda=N\eta_{1+}+(N-1)\eta_{2-}$, then condition (\ref{S5.30D})
is equivalent to (\ref{S5.30F}), while condition (\ref{S5.30C}) is
equivalent to (\ref{S5.30H}).

Observe that necessity of conditions (\ref{S5.30A})--(\ref{S5.30E})
persists for $g\in AP$ without an additional restriction $g\in APW$. To
see that, suppose that $T_G$ is invertible in one of the cases  (i)--(v)
while the respective condition (\ref{S5.30A})--(\ref{S5.30E}) fails.
Approximate $g$ by a function in $APW$ with the same $\eta_{j\pm}$
and so close to $g$ in the uniform norm that the respective Toeplitz
operator is still invertible. This contradicts the necessity of
(\ref{S5.30A})--(\ref{S5.30E}) in the $APW$ case.

It is not clear, however, whether the condition (\ref{S5.30J}) remains
necessary in the $AP$ setting.

\begin{rmk}\label{rmk:S5.5B}
Part {(i)} of Theorem~{\em\ref{th:S5.5A}} means that, for $T_G$ to be
invertible in the case when the length of the spectral gap of $g$
around zero is at least $\lambda$, it in fact must equal $\lambda$ and,
moreover, both endpoints of the spectral gap must belong to
$\Omega(g)$. In contrast with this, for $N>1$ according to parts
{(ii)--(v)} $T_G$ can be invertible when one (or both) of the endpoints
of the spectral gap around zero is missing from $\Omega(g)$, and the
length of this spectral gap can be greater than ${\lambda}/{N}$.
\end{rmk}

For $g\in APW$ Theorem~\ref{th:S5.5A} delivers the invertibility
criterion of $T_G$, and thus a necessary and sufficient condition for
$G$ to admit a  canonical $APW$ factorization. Using
Theorem~\ref{th:K}, however, will allow us to tackle the non-canonical
$AP$ factorability of $G$ as well.

{We assume from now on that $g\in APW$ is given by (\ref{S.5.17A}),
so that in fact $g_\pm\in APW^\pm$, and that $g\in S_{\lambda,N}$
as described by Theorem~\ref{th:S5.5}.}

In the notation of this theorem,  for $N=1$ we have
$\eta_{1+}+\eta_{1-}\geq \lambda$ --- the so called {\em big gap}
case, --- and a solution to (\ref{RH}) is given by
\begin{eqnarray}
  \phi_+ &=& (e_{\lambda-\nu}, -e_{-\nu+\eta_{1+}}\tilde g_+),\label{S5.31} \\
 \phi_- &=& (e_{-\nu},e_{\lambda-\nu-\eta_{1-}}\tilde g_-),\label{S5.32}
\end{eqnarray}
where
\begin{eqnarray}
 \tilde g_+&=& e_{-\eta_{1+}}g_+\;\;\;\;\; (0=\inf \Omega(\tilde g_+)),\label{S5.33} \\
\tilde g_- &=& e_{\eta_{1-}}g_- \;\;\;\;\; (0=\sup \Omega(\tilde
g_-)), \label{S5.34}
\end{eqnarray}
\begin{equation}
\max\{0,\lambda-\eta_{1-}\}\le\nu\le\min\{\eta_{1+},\lambda\}.\label{S5.35}
\end{equation}

Knowing these  solutions and using Theorem~\ref{th:I} with $f\in
APW^+$ as in (\ref{2.23}), we will be able to complete the
consideration of $AP$ factorability in the big gap case.

It was shown earlier (see \cite[Chapter 14]{BKS1},
\cite[Theorem~2.2]{CaKaS09}) that $G$ is $APW$ factorable if, in
addition to the big gap requirement
$\eta_{1+}+\eta_{1-}\geq\lambda$, also
 \eq{S.A.1} \eta_{1+}\in \Omega (g_+)\text{ or }\eta_{1+}\geq\lambda, \quad
-\eta_{1-}\in\Omega (g_-)\text{ or }\eta_{1-}\geq\lambda.\en However,
the $AP$ factorability of $G$  if
$\lambda>\eta_{1+}\notin\Omega(g_+)$ or $\lambda>\eta_{1-}\notin
-\Omega(g_-)$  remained unsettled. As the next theorem shows, in
these cases $G$ does not have an $AP$ factorization.
\begin{thm}\label{th:S5.7}
Let $g\in APW$ be given by {\em (\ref{S.5.17A})}, with $\eta_{1\pm}$
defined by {\em (\ref{S5.18}), (\ref{S5.19})} and satisfying
$\eta_{1+}+\eta_{1-}\geq\lambda$.  Then the matrix function {\em
(\ref{4.1})} is $AP$ factorable if and only if {\em(\ref{S.A.1})} holds. In
this case $G$ actually admits an $APW$ factorization and its partial
indices are $\pm \mu$ with \eq{S.A.2} \mu=\min\{\lambda,\,
\eta_{1+},\, \eta_{1-},\, \eta_{1+}+\eta_{1-}-\lambda\}. \en In particular,
the factorization is canonical if and only if $\eta_{1+}=0$ or
$\eta_{1-}=0$ or $\eta_{1+}+\eta_{1-}=\lambda$.
\end{thm}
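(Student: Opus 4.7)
The strategy is to apply Theorem~\ref{th:K} to $G$. Since $\det G\equiv 1$, we have $\delta:=\kappa(\det G)=0$, so (\ref{2.28}) collapses to the homogeneous problem $G\psi_+=\psi_-$, and the partial indices of any $APW$ factorization must be $\mp\tilde\delta$ for the $\tilde\delta\geq 0$ provided by (\ref{2.29}). The explicit formulas (\ref{S5.31}), (\ref{S5.32}) give a candidate $APW$ solution $(\phi_+,\phi_-)$ for every $\nu$ in the feasibility interval (\ref{S5.35}). To turn an $APW$-factorability criterion into an $AP$-factorability criterion, I will invoke the standard fact (\cite[Chapter~12]{BKS1}) that an $APW$ matrix function is $AP$-factorable if and only if it is $APW$-factorable, with the same partial indices.

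The heart of the argument is the corona analysis of these candidates. Because the first components of $\phi_+$ and $\phi_-$ are single exponentials whose moduli depend only on $\Im z$, the expressions $|\phi_{1\pm}|+|\phi_{2\pm}|$ are bounded below by positive constants on any set where $|\Im z|$ stays bounded; thus the corona infima can fail only as $|\Im z|\to\infty$. Using that $\tilde g_\pm$, defined by (\ref{S5.33})--(\ref{S5.34}), tend to $\widehat g_+(\eta_{1+})$ and $\widehat g_-(-\eta_{1-})$ respectively as $\Im z\to\pm\infty$, uniformly in $\Re z$ (this is immediate from absolute convergence of the $APW$ series together with $\inf\Omega(\tilde g_+)=\sup\Omega(\tilde g_-)=0$), one verifies:
\begin{align*}
e_{-\tilde\delta}\phi_+\in CP^+ &\iff \tilde\delta+\nu=\lambda \text{ or } \bigl(\tilde\delta+\nu=\eta_{1+}\text{ and }\eta_{1+}\in\Omega(g_+)\bigr),\\
\phi_-\in CP^- &\iff \nu=0 \text{ or } \bigl(\nu=\lambda-\eta_{1-}\text{ and }-\eta_{1-}\in\Omega(g_-)\bigr).
\end{align*}

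Intersecting these two dichotomies with the feasibility constraints $\tilde\delta\geq 0$, $\nu\in[\max\{0,\lambda-\eta_{1-}\},\min\{\lambda,\eta_{1+}\}]$, and $\tilde\delta\leq\min\{\lambda,\eta_{1+}\}-\nu$ (this last condition ensuring $e_{-\tilde\delta}\phi_+\in(APW^+)^2$), one sees that a feasible pair $(\nu,\tilde\delta)$ exists precisely when (\ref{S.A.1}) holds. A case-by-case inspection based on whether each of $\eta_{1\pm}$ is $\geq\lambda$ or $<\lambda$ shows that in every feasible case the forced value of $\tilde\delta$ is $\mu=\min\{\lambda,\eta_{1+},\eta_{1-},\eta_{1+}+\eta_{1-}-\lambda\}$, giving (\ref{S.A.2}). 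Canonicity corresponds to $\mu=0$, which, since $\lambda>0$, reduces to $\eta_{1+}=0$, $\eta_{1-}=0$, or $\eta_{1+}+\eta_{1-}=\lambda$.

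The main technical obstacle will be the corona asymptotic analysis, particularly verifying uniformity (in $\Re z$) of the limits of $\tilde g_\pm$ at infinity, and accommodating the degenerate edge cases in which $g_+$ or $g_-$ is identically zero (so the corresponding $\eta_{1\pm}$ is infinite and the big-gap hypothesis together with one half of (\ref{S.A.1}) is vacuous). Once these are handled, the sufficiency follows from Theorem~\ref{th:K} by exhibiting a concrete $(\nu,\tilde\delta)$ in each case, and the necessity follows by noting that if (\ref{S.A.1}) fails then no feasible pair exists, so Theorem~\ref{th:K} precludes even an $APW$ (and hence, by the cited equivalence, $AP$) factorization.
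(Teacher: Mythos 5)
Your sufficiency argument is essentially the paper's: exhibit the explicit solution (\ref{S5.31})--(\ref{S5.35}), choose $\nu$ and $\tilde\delta$ appropriately, verify the corona conditions via the endpoints of the Bohr--Fourier spectrum, and invoke Theorem~\ref{th:K}. Your computation of the forced $\tilde\delta$ agrees with (\ref{S.A.2}) in each of the four cases determined by whether $\eta_{1\pm}\gtrless\lambda$. That part is fine.

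The necessity half, however, has a genuine gap. You argue: if (\ref{S.A.1}) fails, then no pair $(\nu,\tilde\delta)$ taken from the explicit one-parameter family (\ref{S5.31})--(\ref{S5.35}) satisfies (\ref{2.29}); therefore, by Theorem~\ref{th:K}, $G$ is not $APW$-factorable. But Theorem~\ref{th:K} is an ``if and only if'' over \emph{all} $APW$ solutions of the homogeneous problem $G\psi_+=\psi_-$, not over a chosen family. The solutions of $G\psi_+=\psi_-$ in $(APW^\pm)^2$ are not exhausted by choosing $\psi_{1+}=e_{\lambda-\nu}$ (a single exponential): any $\psi_{1+}\in APW^+$ with $\Omega(\psi_{1+})\subset[0,\lambda]$ gives rise to a solution, with $\psi_{2\pm}$ then essentially determined. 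You have not ruled out that some such non-extremal solution satisfies (\ref{2.29}). To close this, you would effectively need to show that if a solution satisfying (\ref{2.29}) existed then $\psi_{1+}$ would be forced to be a single exponential --- and that is precisely the nontrivial content of the paper's necessity proof, which you have bypassed.

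The paper's necessity route is different and worth noting. First it uses the specific solution with $\nu=\eta_{1+}$, shows it is not in $CP^+$, and invokes Theorem~\ref{th:cocri} to conclude there is no canonical factorization --- this is legitimate because canonical factorability forces \emph{every} nonzero solution into $CP^\pm$, so one bad solution suffices. Then, to exclude a non-canonical factorization with indices $\pm\mu$, $\mu>0$, it uses Corollary~\ref{th:I} to assert that every solution (including the explicit one) must have the form $\phi_+=fg_1^+$, and from $e_{\lambda-\nu}=fg_{11}^+$ together with the boundedness of $\Omega(g_{11}^+)$ and the lemma from \cite{C} about maxima and minima of Bohr--Fourier spectra of non-vanishing $AP$ functions, deduces that $f$ is a single exponential, producing the contradiction. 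This spectrum argument is the key step your proposal omits, and without it the restriction to the extremal family $\psi_{1+}=e_{\lambda-\nu}$ is unjustified.
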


\begin{proof} {\sl Sufficiency}. Although it was established earlier, we
give here  a (much) shorter and self-contained proof, based on  the
results of Theorem \ref{th:K}. Namely, if (\ref{S.A.1}) is satisfied, then
(\ref{S5.31})--(\ref{S5.35}) hold with $0=\min \Omega(\tilde g_+)=\max
\Omega(\tilde
   g_-)$. Writing
\begin{eqnarray}
  \phi_+ &=& e_{\mu_1}\tilde \psi_+ \;\;\; \text{with}\;\;\; \mu_1=\min\{\lambda-\nu, -\nu+\eta_{1+}\}, \nonumber \\
  \phi_- &=&  e_{-\mu_2}\tilde \psi_- \;\;\; \text{with}\;\;\; \mu_2=\min\{\nu, \eta_{1-}+\nu-\lambda\}, \nonumber
\end{eqnarray}
we see that $\tilde\psi_\pm\in APW^\pm \cap CP^\pm$ and
$$Ge_{\mu_1+\mu_2}\tilde\psi_+=\tilde\psi_-,$$
so that, according to Theorem \ref{th:K}, $G$ admits an $APW$
factorization with partial indices $\pm \mu$ where
$$\mu=\mu_1+\mu_2=\min\{\lambda,\eta_{1+},\eta_{1-},\eta_{1+}+\eta_{1-}-\lambda\}$$
(as can be checked straightforwardly).

{\sl Necessity}. Suppose that
$\Omega(g_+)\not\ni\eta_{1+}<\lambda$; the case
$-\Omega(g_+)\not\ni\eta_{1-}<\lambda$ can be treated analogously.
Then a  solution to (\ref{RH}) with $\phi_\pm\in (APW^\pm)^2$ is
given by (\ref{S5.31})--(\ref{S5.35}).

It follows from these formulas that $\phi_{2+}=-e_{-\nu+\eta_{1+}}\tilde
g_+$, where $-\nu+\eta_{1+}\geq 0$  due to (\ref{S5.35}). On the other
hand, $0\notin \Omega(\tilde g_+)$ because $\eta_{1+}\notin
\Omega(g_+)$. Therefore, for any $\varepsilon>0$ and
$\nu=\eta_{1+}$ there is $y_\varepsilon\in\R^+$ such that
$$\inf_{\C^++iy_\varepsilon}|\phi_{2+}|=\inf_{\C^++iy_\varepsilon}|e_{-\nu+\eta_{1+}}\tilde g_+|<\varepsilon$$
and
$$\inf_{\C^++iy_\varepsilon}|\phi_{1+}|=\inf_{\C^++iy_\varepsilon}|e_{\lambda-\nu}|<\varepsilon.$$
Thus $\phi_+=(\phi_{1+},\phi_{2+})\notin CP^+$ and we conclude
from Theorem~\ref{th:cocri} that $G$ cannot have a canonical $AP$
factorization.

Now, if $G$ admits a non-canonical factorization, which must have
partial $AP$ indices $\pm \mu$ with $\mu>0$, then according to
Corollary~\ref{th:I}  we have (\ref{2.23}) with $f\in AP^+$, $\Omega (f)
\subset [0,\mu]$. Denoting $g_1^{\pm}=(g_{11}^\pm, g_{21}^\pm)$,
and considering in particular the first component of $\phi_+$, we thus
have from (\ref{S5.31}): \eq{S.A.7} e_{\lambda-\nu}=fg_{11}^+. \en In
addition, from the factorization it follows directly that \[
e_{-\lambda+\mu}g_{11}^+=g_{11}^-. \] Consequently, the
Bohr-Fourier spectrum of $g_{11}^+$ also is bounded, and (\ref{S.A.7})
therefore holds everywhere in $\C$.  In particular, $f$ and $g_{11}^+$
do not vanish in $\C$. But then (see \cite[Lemma 3.2]{C} or \cite[p.
371]{Le}) $\Omega(f)$, $\Omega(g_{11}^+)$ must each contain the
maximum and the minimum element, which implies that
\[ \max\Omega(f)+\max\Omega(g_{11}^+), \min\Omega(f)+\min\Omega(g_{11}^+)
\in\Omega(fg_{11}^+)=\{\lambda-\nu\}.\] We conclude that
$\min\Omega(f)=\max\Omega(f)$ and thus $f=e_\gamma$ for some
$\gamma\in [0,\mu]$.

But then, from  (\ref{S5.31})   and (\ref{2.23}), \[ (g_{11}^+, g_{21}^+)=
(e_{\lambda-\nu-\gamma}, e_{-\nu+\eta_{1+}-\gamma}\tilde g_+)\in
CP^+,
\] which is impossible when
$\Omega(g_+)\not\ni\eta_{1+}<\lambda$. Indeed, in this case
$\lambda-\nu-\gamma>\eta_{1+}-\nu-\gamma\geq 0$ and
$0\notin\Omega(\tilde g_+)$.

Finally, the criterion for the $AP$ factorization of $G$ to be canonical,
when it exists, follows immediately from formulas  (\ref{S.A.2}).
\end{proof}

\begin{rmk}\label{rmk:S5.8}
The last statement of  Theorem~{\em \ref{th:K}} implies that the
construction in the proof of Theorem~{\em \ref{th:S5.7}} delivers  not
only the partial $AP$ indices but also a first column of  $G_+$ and
$G_-$. Namely, they may be chosen equal  to $\tilde\psi_+$ and
$\tilde\psi_-$, respectively.
\end{rmk}

Now we move to the case $N>1$.

Knowing a solution (\ref{S.5.17J}), (\ref{S.5.17K}) of (\ref{RH}) and
using Theorem \ref{th:K} (with $\det G\equiv 1$, and therefore
$\delta=0$), we can obtain sufficient conditions for $AP$ factorability
of $G\in {\mathfrak{S}}_{{\lambda,N}}$, $N>1$.

\begin{thm}\label{th:S5.10}
Let $g\in APW$ be such that $g\in S_{{\lambda,N}}$, $N>1$, as
described in Theorem~{\em\ref{th:S5.5}}, with {\em(\ref{S5.30J})}
satisfied. Then $G$ admits an $APW$ factorization with partial $AP$
indices $\pm\mu$ where:
\begin{description}
\item[(i)] $\mu=N(\eta_{1+}+\eta_{1-})-\lambda$ if
\begin{equation}\label{x1}
\eta_{1+}\in\Omega(g_+),\;\; -\eta_{1-}\in\Omega(g_-)
\end{equation}
and
\begin{equation}\label{x2}
\lambda\ge\max\big\{N\eta_{1+}+(N-1)\eta_{2-},\,N\eta_{1-}+(N-1)\eta_{2+}\big\};
\end{equation}
\item[(ii)] $\mu=N\eta_{1+}-(N-1)\eta_{2+}$ if
\begin{equation}\label{x3}
\eta_{1+},\,\eta_{2+}\in\Omega(g_+)
\end{equation}
and
\begin{equation}\label{x4}
N\eta_{1+}+(N-1)\eta_{2-}\le\lambda\le N\eta_{1-}+(N-1)\eta_{2+};
\end{equation}
\item[(iii)] $\mu=N\eta_{1-}-(N-1)\eta_{2-}$ if
\begin{equation}\label{x5}
-\eta_{1-},\,-\eta_{2-}\in\Omega(g_-)
\end{equation}
and
\begin{equation}\label{x6}
N\eta_{1-}+(N-1)\eta_{2+}\le\lambda\le N\eta_{1+}+(N-1)\eta_{2-};
\end{equation}
\item[(iv)] $\mu=\lambda-(N-1)(\eta_{2+}+\eta_{2-})$ if
\begin{equation}\label{x7}
\eta_{2+}\in\Omega(g_+),\;\; -\eta_{2-}\in\Omega(g_-)
\end{equation}
and
\begin{equation}\label{x8}
\lambda\le\min\big\{N\eta_{1+}+(N-1)\eta_{2-},\,N\eta_{1-}+(N-1)\eta_{2+}\big\}.
\end{equation}
\end{description}
\end{thm}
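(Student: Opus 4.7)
The plan is to apply Theorem~\ref{th:K} to the explicit solutions $\phi_\pm$ of the homogeneous problem (\ref{RH}) furnished by Theorem~\ref{th:S5.4}. Because $\det G\equiv 1$, we have $\delta=\kappa(\det G)=0$, and Theorem~\ref{th:K} reduces to the following sufficient condition: if there exists $(\psi_+,\psi_-)\in(APW^+)^2\times(APW^-)^2$ with $G\psi_+=\psi_-$, $\psi_-\in CP^-$, and $e_{-\tilde\delta}\psi_+\in CP^+$ for some $\tilde\delta\geq 0$, then $G$ admits an $APW$ factorization with partial $AP$ indices $\pm\tilde\delta$. We will produce such a pair by writing $\phi_+=e_{\mu_1}\tilde\phi_+$ and $\phi_-=e_{-\mu_2}\tilde\phi_-$ with $\mu_1,\mu_2\geq 0$, $\mu_1+\mu_2=\mu$, and $\tilde\phi_\pm\in(APW^\pm)^2\cap CP^\pm$, and then setting $\psi_+:=e_{\mu_2}\phi_+=e_{\mu_1+\mu_2}\tilde\phi_+$ and $\psi_-:=e_{\mu_2}\phi_-=\tilde\phi_-$; the relation $G\psi_+=\psi_-$ is automatic, $\psi_-=\tilde\phi_-\in CP^-$, and $e_{-\mu}\psi_+=\tilde\phi_+\in CP^+$ (cf.\ the argument in the proof of Theorem~\ref{th:S5.7}).

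The choice of $\phi_\pm$ in each case corresponds to selecting $\nu$ in the admissible interval $[M,m]$ of (\ref{S5.25}): take $\nu=\eta_{1+}$ in case (i), $\nu=(N-1)\eta_{2+}/N$ in cases (ii) and (iv), and $\nu=\lambda/N-\eta_{1-}$ in case (iii). Admissibility of each of these choices is a routine verification from the case-specific inequalities (\ref{x2})--(\ref{x8}) together with (\ref{S5.21}); once $\nu$ is fixed, $a_\pm$ and $b_\pm$ are read off from (\ref{S5.24}) and (\ref{x10}), and $\phi_\pm$ from (\ref{S.5.17J})--(\ref{S.5.17K}). A short bookkeeping of Bohr--Fourier spectra now identifies the dominant summand per case: in case (i) it is the $a_-^N$ term in $\phi_{2-}$, giving $\mu_1=0$ and $\mu_2=\mu$; in case (ii) it is the $a_+^N$ term in $\phi_{2+}$, giving $\mu_1=\mu$ and $\mu_2=0$; in cases (iii), (iv) it is the leading $b_+^{N-1}$ term of $\phi_{1+}$, giving $\mu_1=\mu$ and $\mu_2=0$ with the appropriate case-specific $\mu$. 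The inequalities (\ref{x2})--(\ref{x8}) ensure that no other summand can produce a smaller minimum (or larger maximum) Bohr--Fourier exponent, so $\mu_1+\mu_2$ is indeed equal to the value of $\mu$ announced in the theorem.

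The main obstacle is the verification $\tilde\phi_\pm\in CP^\pm$. Here we invoke Theorem~\ref{th:CD}, which splits the corona property into the strip condition (\ref{S.5.17C}) and the asymptotic condition (\ref{S.5.17B}) in a shifted half-plane. For the strip condition, since multiplication by $e_{\pm\mu_{1,2}}$ changes the infimum over a bounded strip $S$ only by a positive bounded factor, Lemma~\ref{th:TCD} reduces the requirement to $\inf_S(|g_+|+|g_-|)>0$, which is exactly the hypothesis (\ref{S5.30J}). For the asymptotic condition, the dominant summand of $\tilde\phi_{j\pm}$ at infinity in $\C^\pm$, identified in the previous paragraph and quantified via the estimates (\ref{S.5.17L})--(\ref{S.5.17M}), tends to a non-zero constant precisely because the associated endpoint frequency---$\eta_{1+}$, $\eta_{2+}$, $-\eta_{1-}$, or $-\eta_{2-}$ according to the case---is assumed to belong to $\Omega(g_\pm)$. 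This gives $|\tilde\phi_{j\pm}(\xi)|\geq c>0$ for $\Im\xi$ of sufficiently large absolute value, and the remaining component is bounded from above, so (\ref{S.5.17B}) follows.

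With $\tilde\phi_\pm\in CP^\pm$ established in each of the four cases, Theorem~\ref{th:K} immediately yields an $APW$ factorization of $G$ with partial $AP$ indices $\pm\mu$, as claimed.
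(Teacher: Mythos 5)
Your proof is correct and follows essentially the same route as the paper: apply Theorem~\ref{th:K} (with $\delta=0$) to the explicit solution from Theorem~\ref{th:S5.4} after extracting exponential prefactors $e_{\mu_1}$, $e_{-\mu_2}$, verify the corona conditions via Theorem~\ref{th:CD} split into the strip condition (handled by Lemma~\ref{th:TCD} and hypothesis (\ref{S5.30J})) and the asymptotic condition (reduced to nonvanishing mean values, which the case-specific spectral endpoint hypotheses (\ref{x1}), (\ref{x3}), (\ref{x5}), (\ref{x7}) guarantee). The one organizational difference is that you pin down a concrete $\nu$ per case (namely $\nu=\eta_{1+}$, $\nu=(N-1)\eta_{2+}/N$, $\nu=\lambda/N-\eta_{1-}$, $\nu=(N-1)\eta_{2+}/N$ respectively), whereas the paper keeps $\nu$ arbitrary in the admissible range and observes that $\nu$ cancels in $\mu=\mu_1+\mu_2$; your choices make the bookkeeping tidier (one of $\mu_1,\mu_2$ vanishes) at the cost of a per-case admissibility check, but neither choice affects correctness. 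One small point worth spelling out: in cases (iii) and (iv) you identify the dominant term of $\phi_+$ only; the dominant term of $\phi_-$ is different in the two cases ($\phi_{2-}$ in (iii), $\phi_{1-}$ in (iv)), and that is where the two endpoint frequencies in (\ref{x5}) versus (\ref{x7}) are each used, so the reader should be told which $M(\widetilde\phi_{j-})\neq 0$ is being invoked where.
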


\begin{proof}
Consider the solution to (\ref{RH}) given by (\ref{S.5.17J}),
(\ref{S.5.17K}), with $a_\pm, b_\pm$ as in (\ref{x9}), (\ref{x10}). Then
we obtain
\begin{align*}
\phi_{1+}&=e_{\lambda-N\nu}\sum_{j=0}^{N-1}\left((-1)^jg_-^jg_+^{N-1-j}\right) \\
&=\sum_{j=0}^{N-1}\left((-1)^je_{\lambda-N\nu-j\eta_{2-}+(N-1-j)\eta_{1+}}
(e_{\eta_{2+}}g_-)^j(e_{-\eta_{1+}}g_+)^{N-1-j}\right) \\
&=e_{\lambda-N\nu-(N-1)\eta_{2-}}\widetilde\phi_{1+},
\end{align*}
with $\widetilde\phi_{1+}\in APW^+$ where
$\lambda-N\nu-(N-1)\eta_{2-}\ge 0$ due to (\ref{S5.25}) and
$0=\inf\Omega(\widetilde\phi_{1+})$
($=\min\Omega(\widetilde\phi_{1+})$\, if\,
$-\eta_{2-}\in\Omega(g_-)$);
\[
\phi_{2+}=-e_{-N\nu}g_+^{N}=-e_{-N\nu+N\eta_{1+}}(e_{-\eta_{1+}}g_+)^N
=e_{-N\nu+N\eta_{1+}}\widetilde\phi_{2+},
\]
with $\widetilde\phi_{2+}\in APW^+$ where $-N\nu+N\eta_{1+}\ge 0$
due to (\ref{S5.25}) and $0=\inf\Omega(\widetilde\phi_{2+})$
($=\min\Omega(\widetilde\phi_{2+})$\, if\,
$\eta_{1+}\in\Omega(g_+)$);
\begin{align*}
\phi_{1-}&=e_{-N\nu}\sum_{j=0}^{N-1}\left((-1)^jg_-^jg_+^{N-1-j}\right) \\
&=\sum_{j=0}^{N-1}\left( (-1)^je_{-N\nu-j\eta_{1-}+(N-1-j)\eta_{2+}}
(e_{\eta_{1-}}g_-)^j(e_{-\eta_{2+}}g_+)^{N-1-j}\right) \\
&=e_{-N\nu+(N-1)\eta_{2+}}\widetilde\phi_{1-},
\end{align*}
with $\widetilde\phi_{1-}\in APW^-$ where $-N\nu+(N-1)\eta_{2+}\le
0$ due to (\ref{S5.25}) and $0=\sup\Omega(\widetilde\phi_{1-})$
($=\max\Omega(\widetilde\phi_{1-})$\, if\,
$\eta_{2+}\in\Omega(g_+)$);
\[
\phi_{2-}=(-1)^{N-1}e_{\lambda-N\nu}g_-^{N}=(-1)^{N-1}e_{\lambda-N\nu-N
\eta_{1-}}(e_{\eta_{1-}}g_-)^N
=e_{\lambda-N\nu-N\eta_{1-}}\widetilde\phi_{2-},
\]
with $\widetilde\phi_{2-}\in APW^-$ where
$\lambda-N\nu+N\eta_{1-}\ge 0$ due to (\ref{S5.25}) and
$0=\sup\Omega(\widetilde\phi_{2-})$
($=\max\Omega(\widetilde\phi_{2-})$\, if\,
$-\eta_{1-}\in\Omega(g_-)$).

Hence,
\begin{equation}\label{x11}
G\begin{bmatrix}e_{\lambda-N\nu-(N-1)\eta_{2-}}\widetilde\phi_{1+}\\[1mm]
e_{-N\nu+N\eta_{1+}}\widetilde\phi_{2+}\end{bmatrix}
=\begin{bmatrix}e_{-N\nu+(N-1)\eta_{2+}}\widetilde\phi_{1-}\\[1mm]
e_{\lambda-N\nu-N\eta_{1-}}\widetilde\phi_{2-}\end{bmatrix}.
\end{equation}
Setting now $\phi_+=e_{\mu_1}\widetilde\psi_+$ and
$\phi_-=e_{-\mu_2}\psi_-$ where
\[
\begin{aligned}
\mu_1&=-N\nu+\min\big\{\lambda-(N-1)\eta_{2-},\,N\eta_{1+}\big\}\ge 0,\\[1mm]
\mu_2&=N\nu+\min\big\{-(N-1)\eta_{2+},\,N\eta_{1-}-\lambda\big\}\ge 0,
\end{aligned}
\]
we infer from (\ref{x11}) that $G\psi_+=\psi_-$, with
$\psi_+=e_{\mu}\widetilde\psi_+$ and
\begin{align}\label{x121}
\mu &=\mu_1+\mu_2=\min\big\{\lambda-(N-1)\eta_{2-},\,N\eta_{1+}\big\}+
\min\big\{-(N-1)\eta_{2+},\,N\eta_{1-}-\lambda\big\}\notag\\
&=\min\big\{N(\eta_{1+}+\eta_{1-})-\lambda,\:N\eta_{1+}-(N-1)\eta_{2+},\notag\\
&\qquad\qquad N\eta_{1-}-(N-1)\eta_{2-},\:\lambda-(N-1)(\eta_{2+}+\eta_{2-})\big\}.
\end{align}

We consider separately the cases (i)-(iv).

(i) If (\ref{x1}) and (\ref{x2}) hold, then
$\mu=N(\eta_{1+}+\eta_{1-})-\lambda$ due to (\ref{x121}) and
\[
\widetilde\psi_+=\begin{bmatrix}e_{\lambda-N\eta_{1+}-(N-1)\eta_{2-}}
\widetilde\phi_{1+}\\[1mm]\widetilde\phi_{2+}\end{bmatrix},\quad
\psi_-=\begin{bmatrix}e_{-\lambda+N\eta_{1-}+(N-1)\eta_{2+}}
\widetilde\phi_{1-}\\[1mm]\widetilde\phi_{2-}\end{bmatrix}
\]
where $M(\widetilde\phi_{2+})\ne 0$ if and only
$\eta_{1+}\in\Omega(g_+)$, and $M(\widetilde\phi_{2-})\ne 0$ if and
only $-\eta_{1-}\in\Omega(g_-)$. Hence, by (\ref{x1}),
$\widetilde\psi_+=e_{-\mu}\psi_+\in CP^+$ and $\psi_-\in CP^-$. The
result now follows from Theorem \ref{th:K}.

(ii) If (\ref{x3}) and (\ref{x4}) hold, then
$\mu=N\eta_{1+}-(N-1)\eta_{2+}$ due to (\ref{x121}) and
\[
\widetilde\psi_+=\begin{bmatrix}e_{\lambda-N\eta_{1+}-(N-1)\eta_{2-}}
\widetilde\phi_{1+}\\[1mm]\widetilde\phi_{2+}\end{bmatrix},\quad
\psi_-=\begin{bmatrix}\widetilde\phi_{1-}\\[1mm]
e_{\lambda-N\eta_{1-}-(N-1)\eta_{2+}}\widetilde\phi_{2-}\end{bmatrix}
\]
where $M(\widetilde\phi_{2+})\ne 0$ if and only
$\eta_{1+}\in\Omega(g_+)$, and $M(\widetilde\phi_{1-})\ne 0$ if and
only $\eta_{2+}\in\Omega(g_+)$. Hence, by (\ref{x3}),
$\widetilde\psi_+=e_{-\mu}\psi_+\in CP^+$ and $\psi_-\in CP^-$. The
result now follows from Theorem \ref{th:K}.

(iii) If (\ref{x5}) and (\ref{x6}) hold, then
$\mu=N\eta_{1-}-(N-1)\eta_{2-}$ due to (\ref{x121}) and
\[
\widetilde\psi_+=\begin{bmatrix}\widetilde\phi_{1+}\\[1mm]
e_{-\lambda+N\eta_{1+}+(N-1)\eta_{2-}}\widetilde\phi_{2+}\end{bmatrix},\quad
\psi_-=\begin{bmatrix}e_{-\lambda+N\eta_{1-}+(N-1)\eta_{2+}}
\widetilde\phi_{1-}\\[1mm]\widetilde\phi_{2-}\end{bmatrix}
\]
where $M(\widetilde\phi_{1+})\ne 0$ if and only
$-\eta_{2-}\in\Omega(g_-)$, and $M(\widetilde\phi_{2-})\ne 0$ if and
only $-\eta_{1-}\in\Omega(g_-)$. Hence, by (\ref{x5}),
$\widetilde\psi_+=e_{-\mu}\psi_+\in CP^+$ and $\psi_-\in CP^-$. The
result now follows from Theorem \ref{th:K}.

(iv) If (\ref{x7}) and (\ref{x8}) hold, then
$\mu=\lambda-(N-1)(\eta_{2+}+\eta_{2-})$ due to (\ref{x121}) and
\[
\widetilde\psi_+=\begin{bmatrix}\widetilde\phi_{1+}\\[1mm]
e_{-\lambda+N\eta_{1+}+(N-1)\eta_{2-}}\widetilde\phi_{2+}\end{bmatrix},\quad
\psi_-=\begin{bmatrix}\widetilde\phi_{1-}\\[1mm]
e_{\lambda-N\eta_{1-}-(N-1)\eta_{2+}}\widetilde\phi_{2-}\end{bmatrix}
\]
where $M(\widetilde\phi_{1+})\ne 0$ if and only
$-\eta_{2-}\in\Omega(g_-)$, and $M(\widetilde\phi_{1-})\ne 0$ if and
only $\eta_{2+}\in\Omega(g_+)$. Hence, by (\ref{x7}),
$\widetilde\psi_+=e_{-\mu}\psi_+\in CP^+$ and $\psi_-\in CP^-$. The
result again follows from Theorem \ref{th:K}.
\end{proof}

\begin{rmk}\label{rmk:S5.11}
If $\lambda=N\eta_{1+}+(N-1)\eta_{2-}=N\eta_{1-}+(N-1)\eta_{2+}$,
then all the numbers
\begin{gather*}
N(\eta_{1+}+\eta_{1-})-\lambda,\quad N\eta_{1+}-(N-1)\eta_{2+},\\
N\eta_{1-}-(N-1)\eta_{2-},\quad\lambda-(N-1)(\eta_{2+}+\eta_{2-})
\end{gather*}
coincide, and therefore $\mu$ in Theorem~{\em\ref{th:S5.10}} is equal
to their common value. Analogously, if
$\lambda=N\eta_{1+}+(N-1)\eta_{2-}$, then
\begin{align*}
N(\eta_{1+}+\eta_{1-})-\lambda &=N\eta_{1-}-(N-1)\eta_{2-},\\
\lambda-(N-1)(\eta_{2+}+\eta_{2-}) &=N\eta_{1+}-(N-1)\eta_{2+},
\end{align*}
and if $\lambda=N\eta_{1-}+(N-1)\eta_{2+}$, then
\begin{align*}
N(\eta_{1+}+\eta_{1-})-\lambda &=N\eta_{1+}-(N-1)\eta_{2+},\\
\lambda-(N-1)(\eta_{2+}+\eta_{2-}) &=N\eta_{1-}-(N-1)\eta_{2-}.
\end{align*}
Hence, in the latter two cases
$\mu=\min\big\{N(\eta_{1+}+\eta_{1-})-\lambda,\:
\lambda-(N-1)(\eta_{2+}+\eta_{2-})\big\}$.
\end{rmk}

\begin{rmk}\label{rmk:S5.12}
The main difficulty in applying Theorem~{\em\ref{th:S5.10}} lies in
verifying whether or not condition {\em(\ref{S5.30J})} holds. In this
regard, Theorems {\rm 3.1} and {\rm 3.4} of {\em \cite{CaDi08}} may
be helpful. Also, as was mentioned before, {\em(\ref{S5.30J})} holds if
$a_+$ or $a_-$ is a single exponential. A class of matrix functions with
such $a_\pm$ was studied in {\em \cite{CaKaS09}}, where the $APW$
factorization of $G$ was explicitly obtained. Naturally, conclusions of
{\em \cite{CaKaS09}} match those that can be obtained by applying
Theorem~{\em\ref{th:S5.10}} to the same class. Furthermore,
combining Corollary~{\em \ref{th:I}} and Theorem~{\em\ref{th:J}} of
the present paper with the $APW$ factorization obtained in {\em
\cite{CaKaS09}}, it is possible to characterize completely the solutions
of {\em(\ref{RH})} in that case.
\end{rmk}

Below we give examples of two cases in which condition
(\ref{S5.30J}) is also not hard to verify.

\begin{ex}\label{ex:S5.13}
\emph{Let the off-diagonal entry $g\in S_{{\lambda,N}}$ of the matrix
(\ref{4.1}) be given by}
\[
    g=c_{-2}e_{-\eta_{2-}}+c_{-1}e_{-\eta_{1-}}+g_+
\]
\emph{with} $c_{-2}, c_{-1}\in \C\,,\, 0\leq\eta_{1-}<\eta_{2-}$
\emph{and} $g_+\in APW^+$ \emph{with Bohr-Fourier spectrum
containing its maximum and minimum points $\eta_{j+}$, $j=1,2$.}

\emph{If $N=1$, which happens in particular if $c_{-1}=c_{-2}=0$, then
$G$ is $APW$ factorable with partial $AP$ indices given by Theorem
\ref{th:S5.7}.}

\emph{If $N>1$, then it follows from Theorem \ref{th:S5.10} that $G$
admits an $APW$ factorization with partial $AP$ indices $\pm\mu$,
where }
\[
\mu=\!\begin{cases}N(\eta_{1+}+\eta_{1-})-\lambda\;\;\text{if}\;\;\;
\lambda\ge\max\big\{N\eta_{1+}+(N-1)\eta_{2-},\,N\eta_{1-}+(N-1)\eta_{2+}\big\},\\
N\eta_{1+}-(N-1)\eta_{2+}\;\;\text{if}\;\;\;
N\eta_{1+}+(N-1)\eta_{2-}\le\lambda\le N\eta_{1-}+(N-1)\eta_{2+},\\
N\eta_{1-}-(N-1)\eta_{2-}\;\;\text{if}\;\;\;
N\eta_{1-}+(N-1)\eta_{2+}\le\lambda\le N\eta_{1+}+(N-1)\eta_{2-},\\
\lambda\!-\!(N-1)(\eta_{2+}+\eta_{2-})\;\;\text{if}\;\;
\lambda\le\min\!\big\{\!N\eta_{1+}\!+\!(N-1)\eta_{2-},\,N\eta_{1-}\!+\!(N-1)\eta_{2+}\!\big\},
\end{cases}
\]
\emph{whenever (\ref{S5.30J}) holds. Moreover, the expressions
given in the proof of Theorem \ref{th:S5.10} for
$\phi_{1\pm},\phi_{2\pm}$ in each case also provide, by using
Theorem \ref{th:K}, one column for the factors $G_\pm$ in an $APW$
factorization of $G$.}

{\em In its turn, condition (\ref{S5.30J}) is satisfied if and only if one of
the coefficients $c_{-1}, c_{-2}$ is zero or (if $c_{-1}c_{-2}\neq 0$)}
\begin{equation}\label{E2}
    \inf_{k\in\Z}|g_+(z_k)|>0
\end{equation}
\emph{where }$z_k$, $k\in\Z$\emph{, are the zeros of}
$g_-=c_{-2}e_{-\eta_{2-}}+c_{-1}e_{-\eta_{1-}}$,\emph{ i.e.,}
\[ 
    z_k=\frac{1}{\eta_{2-}-\eta_{1-}}\left(\arg \left(-\frac{c_{-2}}{c_{-1}}\right)+2k\pi -i\log
    \left|\frac{c_{-2}}{c_{-1}}\right|\right).
\] 

\emph{If, in particular, $g_+$ also is a binomial, i.e.,}
\[ 
g_+=c_{1}e_{\eta_{1+}}+c_{2}e_{\eta_{2+}}\hspace{0.5cm} (c_{1},
c_{2}\in \C\,, \, 0\leq\eta_{1+}<\eta_{2+})
\] 
\emph{then (\ref{E2}) is satisfied whenever one of the coefficients
$c_1, c_2$ is zero. On the other hand, for $c_1, c_2\neq 0$ condition
(\ref{E2}) is equivalent to (cf. Lemma~3.3 in \cite{BKdST2})}
\[ 
    \left|\frac{c_1}{c_2}\right|^{\eta_{2-}-\eta_{1-}}\neq
    \left|\frac{c_{-2}}{c_{-1}}\right|^{\eta_{2+}-\eta_{1+}}
    \hspace{0.4cm}\text{\emph{if}} \hspace{0.4cm}
    \frac{\eta_{2+}-\eta_{1+}}{\eta_{2-}-\eta_{1-}}\in\R\backslash\Q;
\] 
and to
\[ 
    \left(-\frac{c_1}{c_2}\right)^{q}\neq
    \left(-\frac{c_{-2}}{c_{-1}}\right)^{p}
    \hspace{0.4cm}\text{\emph{if}} \hspace{0.4cm} \frac{\eta_{2+}-\eta_{1+}}{\eta_{2-}-\eta_{1-}}=\frac{p}{q},
    \hspace{0.4cm}\text{ \emph{with }} \hspace{0.4cm} p, q\in \N \hspace{0.4cm} \text{\emph{relatively
    prime}}.
\] 
\end{ex}

\begin{ex}\label{ex:S5.14}
\emph{Let $G\in {{\mathfrak{S}}}_{{\lambda,N}}$, $N>1$, with the
off-diagonal entry $g\in APW$ of the form $g=g_-+g_+$ where}
\begin{equation}\nonumber
g_+=c_\alpha e_\alpha g_-+c_\mu e_\mu,
\end{equation}
\emph{$\alpha, \mu>0$, $c_\alpha, c_\mu\in \C, c_\mu\neq 0$ and
$\eta_{1\pm}, \eta_{2\pm}\in \Omega(g_\pm)$ (see (\ref{S5.18}),
(\ref{S5.19})). It is easy to see that (\ref{S5.30J}) holds.
Theorem~\ref{th:S5.10} implies therefore that  $G$ admits an
    $APW$ factorization with partial $AP$ indices as indicated in
    that theorem.}
\end{ex}

\newpage

\newpage

%
\end{document}